\documentclass[a4paper]{amsart}

\usepackage{mathtools, amsthm, amssymb, hyperref, bbm}
\usepackage[capitalise, nameinlink, noabbrev, nosort]{cleveref} 
\usepackage{xy} \xyoption{all}

\theoremstyle{plain}
\newtheorem{lma}{Lemma}[section]
\crefname{lma}{Lemma}{Lemmata}
\newtheorem{thm}[lma]{Theorem}
\crefname{thm}{Theorem}{Theorems}
\newtheorem{cor}[lma]{Corollary}
\crefname{cor}{Corollary}{Corollaries}
\newtheorem{prp}[lma]{Proposition}
\crefname{prp}{Proposition}{Propositions}

\theoremstyle{definition}
\newtheorem{pgr}[lma]{}
\crefname{pgr}{Paragraph}{Paragraphs}
\newtheorem{dfn}[lma]{Definition}
\crefname{dfn}{Definition}{Definitions}

\newcounter{theoremintro}
\newtheorem{thmintro}[theoremintro]{Theorem}

\theoremstyle{remark}
\newtheorem{rmk}[lma]{Remark}
\crefname{rmk}{Remark}{Remarks}

\crefname{rmks}{Remarks}{Remarks}

\crefname{exa}{Example}{Examples}

\crefname{qst}{Question}{Questions}
\newtheorem{con}[lma]{Conjecture}
\crefname{con}{Conjecture}{Conjectures}

\newcommand{\calG}{\mathcal{G}}
\newcommand{\calH}{\mathcal{H}}

\newcommand{\Bo}{\mathcal{B}^{\mathrm{o}}}
\newcommand{\Bco}{\mathcal{B}_{\mathrm{c}}^{\mathrm{o}}}
\newcommand{\etale}{\'etale}
\newcommand{\SInv}{\mathfrak{S}_{\mathrm{pi}}}
\newcommand{\ran}{\mathsf{r}}
\newcommand{\sou}{\mathsf{s}}

\newcommand{\Fp}{F_{\lambda}^{p}}
\newcommand{\Fq}{F_{\lambda}^{q}}
\newcommand{\Fpast}{F_{\lambda}^{p, \ast}}
\newcommand{\LIG}{L^{I}}

\newcommand{\PI}{\mathrm{PI}}
\newcommand{\PIMP}{\mathrm{PI}_{\mathrm{MP}}}
\newcommand{\subMP}{\mathrm{MP}}
\newcommand{\subHerm}{\mathrm{h}}
\newcommand{\ca}{$\rm C^*$-algebra}
\newcommand{\ba}{Banach algebra}
\newcommand{\lpoa}{$L^p$-operator algebra}
\newcommand{\andSep}{\,\,\,\text{ and }\,\,\,}
\newcommand{\TT}{\mathbb{T}}
\newcommand{\CC}{\mathbb{C}}
\newcommand{\RR}{\mathbb{R}}

\newcommand{\NN}{\mathbb{N}}
\newcommand{\Bdd}{\mathcal{B}}

\DeclareMathOperator{\supp}{supp}
\DeclareMathOperator{\core}{core}
\DeclareMathOperator{\linSpan}{span}

\numberwithin{equation}{section} 

\title{Rigidity of pseudofunction algebras of ample groupoids} 
\date{\today}

\author{Eusebio Gardella}
\author{Mathias Palmstr{\o}m}
\author{Hannes Thiel}

\address{Eusebio Gardella,
	Department of Mathematical Sciences, Chalmers University of
	Technology and University of Gothenburg, Gothenburg SE-412 96, Sweden.}
\email{gardella@chalmers.se}
\urladdr{www.math.chalmers.se/~gardella}

\address{Department of Mathematical Sciences, Faculty of Information Technology and Electrical Engineering, NTNU -- Norwegian University of Science and Technology, Trondheim, Norway}
\email{mathias.palmstrom@ntnu.no}
\urladdr{www.ntnu.edu/employees/mathias.palmstrom}

\address{Hannes~Thiel, 
	Department of Mathematical Sciences, Chalmers University of Technology and the University of
	Gothenburg, SE-412 96 Gothenburg, Sweden}
\email{hannes.thiel@chalmers.se}
\urladdr{www.hannesthiel.org}

\thanks{
	EG was partially supported by the Swedish Research Council Grant 2021-04561.
	HT was partially supported by the Knut and Alice Wallenberg Foundation (KAW 2021.0140).
}

\subjclass[2020]%
{Primary
	47L10; 
	Secondary
	46L55, 
	20M18. 
}
\keywords{ample groupoids, inverse semigroups, $L^p$-operator algebras}

\begin{document}

\begin{abstract}
We show that a Hausdorff, ample groupoid $\calG$ can be completely recovered from the $I$-norm completion of $C_c(\calG)$.
More generally, we show that this is also the case for the algebra of symmetrized $p$-pseudofunctions, as well as for the reduced groupoid $L^p$-operator algebra, for $p\neq 2$.

Our proofs are based on a new construction of an inverse semigroup built from Moore-Penrose invertible partial isometries in an $L^p$-operator algebra.
Along the way, we verify a conjecture of Rako\v{c}evi{\'c} concerning the continuity of the Moore-Penrose inverse for $L^p$-operator algebras.
\end{abstract}	

\maketitle

\renewcommand*{\thetheoremintro}{\Alph{theoremintro}}

\section{Introduction} 
\label{sec: introduction}

Wendel's celebrated theorem (\cite{Wen51IsometrIsoGpAlgs}) is a foundational result in 
abstract harmonic analysis that highlights a deep connection between the algebraic structure of a group and the functional-analytic properties of its associated $L^1$-group algebra. 
The theorem is both powerful and easy to state: if $G$ and $H$ are locally compact groups, then there is an isometric isomorphism\footnote{Although Wendel's original theorem assumes that a $\ast$-isomorphism exists, it was shown recently in 
\cite{GarThi22IsoConv} that this assumption is not necessary.} of Banach algebras
$L^1(G)\cong L^1(H)$ if and only if there is an isomorphism of topological 
groups $G\cong H$. 
This result reveals a remarkable rigidity: the Banach-algebraic structure of $L^1(G)$ completely encodes the group-theoretic data of $G$. 

Wendel's result laid the groundwork for a broader attempt to understand how certain properties of a group (such as amenability) are reflected in its $L^1$-algebra. 
It also motivated the study of similar rigidity questions for other Banach algebras associated to groups. 
For example, Johnson showed that the measure algebra $M(G)$ of a group $G$ completely remembers~$G$. 
For group \ca{s}, the situation is more complicated, and it is known that there may be an isomorphism $C^*_\lambda(G)\cong C^*_\lambda(H)$ for non-isomorphic groups $G$ and $H$; this 
is already the case for $G=\mathbb{Z}_4$ and $H=\mathbb{Z}_2\oplus \mathbb{Z}_2$.
For the algebras of $p$-pseudofunctions introduced by Herz \cite{Her71pSpApplConv}
and recently studied extensively (see, for example, \cite{HejPoy15SimpleLp, GarThi15GpAlgLp, GarThi19ReprConvLq, Phi19arX:SimpleRedGpBAlgs}), an analog of Wendel's theorem was obtained in \cite{GarThi22IsoConv}: for $p\neq 2$, there exists an isometric isomorphism $F^p_\lambda(G)\cong F^p_\lambda(H)$ if and only if $G$ and $H$ are isomorphic. 
In this work, we are interested in pursuing generalizations of Wendel's theorem where groups are replaced by more general algebraic structures.

Groupoids arise naturally as unifying objects for capturing symmetries in mathematical structures that are too intricate to be described by groups alone, and generalize both groups and equivalence relations. 
While groups encode global symmetries, groupoids are well-suited to situations where transformations are only local or partially defined, making them useful tools in many areas of mathematics, such as dynamical systems, foliation theory, and noncommutative geometry. 
The convolution algebra $L^I(\calG)$ associated to the $I$-norm of a groupoid $\calG$ generalizes the group algebra $L^1(G)$, and this construction serves as a bridge for applying Banach-algebraic methods to the study of geometric and dynamical data.
In light of the deep impact of Wendel's theorem, it is a very natural problem to determine whether $L^I(\calG)$ completely remembers $\calG$. 

In this work, we tackle the isomorphism problem for the convolution algebras associated to Hausdorff groupoids. 
While most of our results apply in the \'etale case, our strongest result is for ample groupoids (see \cref{pgr:groupoids}):

\begin{thmintro} [See \cref{prp:LIRigidity}]
Let $\calG$ and $\calH$ be Hausdorff, ample groupoids. 
Then there is an isometric isomorphism $\LIG (\calG) \cong \LIG (\calH)$ if and only if $\calG \cong \calH$.
\end{thmintro}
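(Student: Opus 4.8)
The plan is to prove the two implications separately, with essentially all of the work going into recovering $\calG$ from $\LIG(\calG)$. The easy direction is a functoriality check: a topological isomorphism $\calG \cong \calH$ carries $C_c(\calG)$ onto $C_c(\calH)$ compatibly with convolution, involution, and the $I$-norm, and hence extends to an isometric isomorphism of the completions $\LIG(\calG) \cong \LIG(\calH)$. The substance of the theorem is the converse, for which I would set up a reconstruction procedure $A \mapsto \calG(A)$ sending a Banach $\ast$-algebra of the relevant type to a topological groupoid, invariant under isometric isomorphism by construction, and satisfying $\calG(\LIG(\calG)) \cong \calG$ naturally; the converse then follows immediately by applying it to both sides of a given isometric isomorphism.

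The bridge between algebra and groupoid is the inverse semigroup $\SInv$ of (normalized) Moore--Penrose invertible partial isometries, built intrinsically from the multiplication, involution, and norm. First I would record that for every compact open bisection $U \subseteq \calG$ the characteristic function $1_U$ lies in $\SInv(\LIG(\calG))$: it is a partial isometry with $1_U^\ast = 1_{U^{-1}}$, its Moore--Penrose inverse is again $1_{U^{-1}}$, and the Hermitian idempotents $1_U 1_{U^{-1}} = 1_{\ran(U)}$ and $1_{U^{-1}} 1_U = 1_{\sou(U)}$ are supported on the unit space. Thus $U \mapsto 1_U$ embeds the inverse semigroup $\Bco(\calG)$ of compact open bisections into $\SInv(\LIG(\calG))$, carrying the idempotent semilattice onto the characteristic functions of compact open subsets of $\calG^{(0)}$. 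Since $\SInv$ is defined purely in Banach-$\ast$-algebra terms, any isometric isomorphism $\LIG(\calG) \cong \LIG(\calH)$ restricts to an isomorphism $\SInv(\LIG(\calG)) \cong \SInv(\LIG(\calH))$ of inverse semigroups.

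The crux is to show this embedding is onto, i.e. that every element of $\SInv(\LIG(\calG))$ is the characteristic function of a compact open bisection, so that $\SInv(\LIG(\calG)) \cong \Bco(\calG)$. I expect this to be the main obstacle, as the unit ball of the $I$-norm algebra is far less rigid than that of a $C^\ast$-algebra and a priori admits many partial isometries. Here I would exploit the Moore--Penrose structure together with the continuity of the Moore--Penrose inverse (the Rako\v{c}evi\'c-type statement established earlier): the Hermitian idempotents $v^\ast v$ and $v v^\ast$ attached to a normalized Moore--Penrose partial isometry $v$ must be characteristic functions of compact open subsets of $\calG^{(0)}$, and controlling how $v$ intertwines them forces $v$ to be supported on a single bisection and unimodular there; the chosen normalization then removes the scalar phase ambiguity and pins down $v = 1_U$ exactly.

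Finally I would reconstruct the groupoid from the inverse semigroup. The idempotent semilattice of $\Bco(\calG)$ is the generalized Boolean algebra of compact open subsets of $\calG^{(0)}$, so its tight (Stone) spectrum recovers $\calG^{(0)}$ together with its topology; the canonical action of $\Bco(\calG)$ on this spectrum then recovers $\calG$ as its associated groupoid of germs, using ampleness to ensure that compact open bisections form a basis for the topology. Performing this construction on both sides and using that an isometric isomorphism yields an isomorphism $\Bco(\calG) \cong \Bco(\calH)$ compatible with the order and source/range structure, one obtains a homeomorphism $\calG^{(0)} \cong \calH^{(0)}$ that extends to an isomorphism of topological groupoids $\calG \cong \calH$, completing the converse.
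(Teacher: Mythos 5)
Your overall skeleton (embed $\Bco(\calG)$ into an inverse semigroup of Moore--Penrose partial isometries, prove the correspondence is bijective, then reconstruct $\calG$ via the tight spectrum and the groupoid of germs, exactly as in Exel's theorem) matches the paper. But there is a genuine gap at what you yourself identify as the crux. It is \emph{false} that every MP-partial isometry in $\LIG(\calG)$ is the characteristic function of a compact open bisection: for any $B\in\Bco(\calG)$ and any continuous $f\colon B\to\TT$, the extension of $f$ by zero is an MP-partial isometry with $f^\dag=f^*$ and $f^\dag f=\mathbbm{1}_{\sou(B)}$, $ff^\dag=\mathbbm{1}_{\ran(B)}$. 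The ambiguity is therefore not a ``scalar phase'' that a normalization can remove; it is an arbitrary continuous $\TT$-valued function on $B$, and $\mathbbm{1}_{\sou(B)}$, $\mathbbm{1}_{\ran(B)}$ do not see it. The paper's resolution is to define the invariant as $\SInv(A)=\PIMP(A)/\!\simeq$, the set of \emph{homotopy} classes of MP-partial isometries, and to prove (i) that this quotient is still an inverse semigroup --- which requires the continuity of Moore--Penrose inversion, i.e.\ the Rako\v{c}evi\'c-type theorem, together with the commutation of hermitian idempotents --- and (ii) that every $\TT$-valued function on a compact open bisection is homotopic to the indicator function. Step (ii) is precisely where ampleness is used: $B$ is totally disconnected, so it splits into two clopen pieces on each of which the function misses a point of $\TT$ and is hence null-homotopic. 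Without the homotopy quotient your map $\Bco(\calG)\to\PIMP(\LIG(\calG))$ is injective but not surjective, and the reconstruction does not go through.

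A second, smaller gap: you propose to analyse hermitian idempotents and MP-partial isometries directly inside $\LIG(\calG)$, but $\LIG(\calG)$ is not itself an $L^p$-operator algebra, so Lamperti-type arguments do not apply to it. The paper first proves the structure results for $\Fp(\calG)$ acting on $\ell^p(\calG)$ (using Lamperti's theorem and the right convolution operators $\rho(\mathbbm{1}_y)$ to show that the support of an MP-partial isometry is a compact open bisection), and then transfers them to $\LIG(\calG)=F^{1,*}_\lambda(\calG)$ via the contractive \emph{injective} map $\varphi_1\colon F^{1,*}_\lambda(\calG)\to F^1_\lambda(\calG)$ and the isometric embedding into $F^p_\lambda\oplus F^q_\lambda$. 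You would need some substitute for this transfer step; as written, the claim that hermitian idempotents of $\LIG(\calG)$ are indicator functions of compact open subsets of $\calG^{(0)}$ is asserted rather than proved. Finally, note that the theorem does not assume the isomorphism preserves the involution, so the invariant must be built from the multiplication and the norm alone (hermitian elements via $\|\exp(ita)\|=1$), not from the $*$-structure as your phrasing suggests.
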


We emphasize that we do not need the isomorphism $\LIG (\calG) \cong \LIG (\calH)$ to preserve the involution. 

Our strategy to prove the above theorem is to show that the inverse semigroup~$\Bo_c(\calG)$ of compact, open bisections of a Hausdorff, ample groupoid $\calG$ is entirely encoded in $\LIG (\calG)$. 
Once this is accomplished, the desired rigidity follows from a reconstruction result of Exel; see \cite[Theorem 4.8]{Exe10ReconstrTotDiscGpds}. 

Despite $L^I(\calG)$ not being itself representable on an $L^p$-space, the recent advances in the study of $L^p$-operator algebras give a hint of how the unit space $\calG^{(0)}$ may be encoded in $L^I(\calG)$, and we will show that $C_0(\calG^{(0)})$ is isomorphic to the C*-core of $L^I(\calG)$, namely the complex vector space generated by the hermitian elements in $L^I(\calG)$; see \cref{pgr:core}.
The way we will identify the compact open bisections of $\calG$ within $L^I(\calG)$ is using (homotopy classes of) what we call \emph{Moore-Penrose invertible partial isometries}, or MP-partial isometries for short; see \cref{pgr:PI}. 
These are contractive elements $a\in L^I(\calG)$ such that there exists a (necessarily unique) contraction $b\in L^I(\calG)$ (called the Moore-Penrose inverse) with 
\[
a=a b a \ \ \mbox{ and } \ \ b = bab
\]
such that $ab$ and $ba$ are hermitian.

It is not too difficult to show that $C_0(\calG^{(0)})$ is contained in the C*-core of $L^I(\calG)$, and that compact, open bisections in $\calG$ induce MP-partial isometries in $L^I(\calG)$. 
On the other hand, both converse directions require a fair amount of work.
For once, it is not a priori clear that the C*-core of $L^I(\calG)$ is closed under multiplication (and hence indeed a C*-algebra), since it is not obvious why hermitian elements in $L^I(\calG)$ should commute. 
It is even more challenging to show that the set $\PIMP(L^I(\calG))$ of MP-partial isometries in $L^I(\calG)$ is an inverse semigroup under multiplication. 

In order to circumvent this, we will regard $L^I(\calG)$ as the case $p=1$ of the more general construction of the \emph{symmetrized $p$-pseudofunction algebra}, $F^{p,\ast}_\lambda(\calG)$. 
For $p\in [1,\infty)$, there is a canonical representation $\varphi_p\colon F^{p,\ast}_\lambda(\calG)\to \mathcal{B}(\ell^p(\calG))$ which extends left convolution on $C_c(\calG)$.
Although this representation is not isometric, it is contractive and injective (\cref{prp:FpStar-Fp-Injective}), and the closure of the image can be identified with the reduced groupoid $L^p$-operator algebra $F^p_\lambda(\calG)$, which has been introduced and studied in \cite{GarLup17ReprGrpdLp, ChoGarThi24LpRigidity}. 
As it turns out, $\varphi_p$ induces an identification of the MP-partial isometries of $F^{p,\ast}_\lambda(\calG)$ and of $F^{p}_\lambda(\calG)$. 
The crucial advantage in this approach is that both the hermitian elements and the MP-partial isometries of Banach algebras that act on an $L^p$-space, for $p\neq 2$, can be described by exploiting Lamperti's description of the invertible isometries between $L^p$-spaces. 

The ultimate outcome is that $C_0(\calG^{(0)}$ is isometrically isomorphic to the C*-core of $F^{p,\ast}_\lambda(\calG)$, and that the inverse semigroup $\Bo_c(\calG)$ of compact, open bisections in~$\calG$ is naturally isomorphic to the inverse semigroup $\PIMP(F^{p,\ast}_\lambda(\calG))/\!\simeq$ of homotopy classes of MP-partial isometries in $F^{p,\ast}_\lambda(\calG)$, for any $p\in [1,\infty)\setminus\{2\}$. 

Our methods also allow us to prove a rigidity result for both $F^{p}_\lambda(\calG)$ and its symmetrized version $F^{p,\ast}_\lambda(\calG)$: 

\begin{thmintro}[See \cref{prp:LpRigidity} and \cref{prp:SymLpRigidity}]
Let $\calG$ and $\calH$ be Hausdorff, ample groupoids, and let $p\in [1,\infty)\setminus\{2\}$. Then the following are equivalent:
\begin{enumerate}
 \item There is an isometric isomorphism $F^p_\lambda (\calG) \cong F_{\lambda}^{p} (\calH)$.
  \item There is an isometric isomorphism $F^{p,\ast}_\lambda (\calG) \cong F_{\lambda}^{p,\ast} (\calH)$.
  \item There is a topological groupoid isomorphism $\calG\cong \calH$.
\end{enumerate}
\end{thmintro}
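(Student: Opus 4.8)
The plan is to prove that each of conditions (1) and (2) is equivalent to (3) by routing all three through a single combinatorial invariant: the inverse semigroup $\Bco(\calG)$ of compact, open bisections, which by Exel's reconstruction theorem \cite[Theorem 4.8]{Exe10ReconstrTotDiscGpds} determines the Hausdorff ample groupoid $\calG$ up to isomorphism. Since (1)$\Leftrightarrow$(3) together with (2)$\Leftrightarrow$(3) yields (1)$\Leftrightarrow$(2) by transitivity, it suffices to treat the two algebras separately against condition (3).

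First I would dispatch the easy implications (3)$\Rightarrow$(1) and (3)$\Rightarrow$(2). A topological groupoid isomorphism $\Psi\colon\calG\to\calH$ induces an isomorphism $C_c(\calG)\to C_c(\calH)$, $f\mapsto f\circ\Psi^{-1}$, intertwining convolution and involution; since the regular representations are built canonically from the groupoids, $\Psi$ also induces an isometric bijection of the underlying representation spaces that intertwines $\varphi_p$, so that both the $p$-pseudofunction norm and its symmetrized counterpart are preserved. Hence $\Psi$ extends to isometric isomorphisms $\Fp(\calG)\cong\Fp(\calH)$ and $\Fpast(\calG)\cong\Fpast(\calH)$.

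The substance lies in the converse implications (1)$\Rightarrow$(3) and (2)$\Rightarrow$(3). The key point is that the notion of an MP-partial isometry is phrased entirely in Banach-algebraic terms: contractivity, the Moore--Penrose relations $a=aba$ and $b=bab$, and the hermiticity of $ab$ and $ba$. As hermiticity is a metric notion (defined via the norm of the associated one-parameter groups, with no reference to a chosen involution), both contractions and hermitian elements are preserved by any isometric isomorphism, and homotopy within $\PIMP$ is a purely topological relation; therefore an isometric isomorphism $A\cong B$ automatically induces an isomorphism of inverse semigroups $\PIMP(A)/\!\simeq\;\cong\;\PIMP(B)/\!\simeq$. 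Combined with the identifications established earlier --- $\Bco(\calG)\cong\PIMP(\Fpast(\calG))/\!\simeq$, and, transported across the injective contractive representation $\varphi_p$ of \cref{prp:FpStar-Fp-Injective}, $\Bco(\calG)\cong\PIMP(\Fp(\calG))/\!\simeq$ --- an isometric isomorphism in (1) or in (2) yields an isomorphism $\Bco(\calG)\cong\Bco(\calH)$. Passing to idempotent semilattices and tight spectra, Exel's theorem then delivers $\calG\cong\calH$, which is (3).

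The main obstacle is not in this final assembly but in the prerequisite identification of $\Bco(\calG)$ with the homotopy classes of MP-partial isometries, which is precisely where the restriction $p\neq 2$ enters through Lamperti's description of the invertible isometries of $L^p$-spaces. For the present theorem, the one genuinely delicate point to verify is that $\varphi_p$ matches the MP-partial isometries of $\Fpast(\calG)$ bijectively with those of $\Fp(\calG)$, compatibly with the homotopy relation; this is exactly what lets conditions (1) and (2) share the common intermediate invariant and thereby become equivalent to each other as well as to (3).
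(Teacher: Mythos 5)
Your proposal is correct and follows essentially the same route as the paper: both arguments reduce (1) and (2) to (3) through the invariant $\SInv(\cdot)=\PIMP(\cdot)/\!\simeq$, identified with $\Bco(\calG)$ via \cref{prp:Bco_SFp} and transported between $\Fp(\calG)$ and $\Fpast(\calG)$ along $\varphi_p$ as in \cref{prp:PIMP-SymFP}, before invoking Exel's reconstruction theorem. The only cosmetic difference is that the paper establishes the identification $\Bco(\calG)\cong\SInv(\Fp(\calG))$ first (since Lamperti's theorem applies to the $L^p$-operator algebra $\Fp(\calG)$, not to $\Fpast(\calG)$) and then transfers it to the symmetrized algebra, whereas you present the symmetrized identification as primary; the logical content is the same.
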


For \emph{topologically principal} groupoids, the theorem above recovers \cite[Corollary~5.6]{ChoGarThi24LpRigidity}, although the proofs are dramatically different since we do not use any kind of normalizers in this work, and the C*-core only plays a minor role in our methods. 
On the other hand, for groupoids
with trivial unit space, the theorem above recovers the main result of \cite{GarThi22IsoConv} for discrete groups. 

It should be mentioned that there is a vast amount of literature around the problem of reconstructing a groupoid $\calG$ from the inclusion $C_0(\calG^{(0)})\hookrightarrow C^*_\lambda(\calG)$. 
This line of research was initiated with Renault's celebrated work \cite{Ren08Cartan} on reconstruction of (topologically principal) groupoids from a Cartan subalgebra, which was recently strengthened in \cite{CarRuiSimTom21ReconGpdsCstarRigidity}. 
Algebraic analogs (for example, where $C^*_\lambda(\calG)$ is replaced by the Steinberg algebra of $\calG$) have also been explored by a number of authors; 
see \cite{Ste19DiagPresIsoGpdAlg, BicSta19GenNCStoneDuality, BicCla21ReconstrEtaleGpd}. 
We emphasize that our rigidity results do not assume that we know beforehand the position of $C_0(\calG^{(0)})$ in our Banach algebras. 
Even if this is ignored, our results apply to a class of groupoids which is considerably larger than those considered in previous works.

There is an additional upshot of our methods:
the tools and machinery developed to prove the above rigidity results
allows us to establish, for a large class of Banach algebras, Rako\v{c}evi{\'c}'s conjecture (\cite{Rak88MPInvBAlg}; see \cref{cnj}) on the continuity of Moore-Penrose inversion for partial isometries:

\begin{thmintro}[See \cref{prp:ContinuityMP}]
Let $A$ be a unital \ba{} whose hermitian idempotents are ultrahermitian and commute.
Then $A$ satisfies Rako\v{c}evi{\'c}'s conjecture. 
This applies in particular to $L^p$-operator algebras, for $p\in [1,\infty)$.
\end{thmintro}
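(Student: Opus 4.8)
The plan is to prove the stronger assertion that Moore--Penrose inversion is Lipschitz (with constant at most $5$) on the set of MP-partial isometries; continuity, and hence Rako\v{c}evi\'{c}'s conjecture, is then immediate. So let $a_n \to a$ be MP-partial isometries, and write $b_n = a_n^{\dagger}$ and $b = a^{\dagger}$ for their Moore--Penrose inverses. By the definition of an MP-partial isometry, all of $a_n, a, b_n, b$ are contractions. The relevant data are the range and source idempotents $p_n = a_n b_n$, $q_n = b_n a_n$, $p = ab$, $q = ba$. The Moore--Penrose identities make these idempotent (for instance $(ab)^2 = a(bab) = ab$) and hermitian, so by hypothesis they all commute. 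Finally, every hermitian idempotent $e$ is bicontractive: recall that a hermitian element $h$ satisfies $\|\exp(ith)\| = 1$ for all $t \in \mathbb{R}$, and $\exp(i\pi e) = 1 - 2e$, so $\|1 - 2e\| = 1$; writing $e = \tfrac12(1 - (1-2e))$ and $1 - e = \tfrac12(1 + (1-2e))$ gives $\|e\| \le 1$ and $\|1-e\| \le 1$. (It is the ultrahermiticity hypothesis that secures this commuting, bicontractive framework, and in particular makes the theorem applicable to $L^p$-operator algebras.)

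The heart of the proof is the convergence $p_n \to p$ and $q_n \to q$. The Moore--Penrose relations yield the one-sided identities $(1-p)a = 0 = a(1-q)$ and $(1-p_n)a_n = 0 = a_n(1-q_n)$. For the range idempotents I argue on the left: since $(1-p)a = 0$ and $b_n$ is a contraction,
\[
\|(1-p)p_n\| = \|(1-p)a_n b_n\| \le \|(1-p)(a_n - a)\| \le \|a_n - a\|,
\]
and, symmetrically, from $(1-p_n)a_n = 0$ and $\|b\| \le 1$,
\[
\|(1-p_n)p\| = \|(1-p_n)ab\| \le \|(1-p_n)(a - a_n)\| \le \|a_n - a\|.
\]
The decisive point is that $p$ and $p_n$ commute, which gives the exact identity $p_n - p = (1-p)p_n - (1-p_n)p$; the two estimates then force $\|p_n - p\| \le 2\|a_n - a\|$. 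The mirror-image argument on the right, based on $a(1-q) = 0$, $a_n(1-q_n) = 0$ and the identity $q_n - q = q_n(1-q) - q(1-q_n)$, gives $\|q_n - q\| \le 2\|a_n - a\|$.

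It remains to transfer this to the inverses. Starting from $b_n = b_n a_n b_n$ and $b = bab$, a short telescoping computation (using $b_n(1-p_n) = 0$, $q_n b_n = b_n$, $bp = b$, $qb = b$, together with $b(1-p_n) = b(p - p_n)$ and $qb_n = b_n + (q - q_n)b_n$) produces the perturbation identity
\[
b_n - b = -\,b(a_n - a)b_n + b(p_n - p) + (q_n - q)b_n.
\]
Taking norms, and using that $b$ and $b_n$ are contractions, gives
\[
\|a_n^{\dagger} - a^{\dagger}\| \le \|a_n - a\| + \|p_n - p\| + \|q_n - q\| \le 5\,\|a_n - a\|,
\]
which is the desired Lipschitz estimate and completes the proof.

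The main obstacle is the middle step. The one-sided bounds $\|(1-p)p_n\| \to 0$ and $\|(1-p_n)p\| \to 0$ are soft, but each controls $p_n$ only from a single side; promoting them to genuine norm-convergence $p_n \to p$ is exactly where the commutativity of hermitian idempotents is indispensable, via the identity $p_n - p = (1-p)p_n - (1-p_n)p$. In a general \ba{}, lacking either a functional calculus or an orthogonality relation between the range and source projections, there is no evident substitute for this identity; this is the structural reason the hypothesis on hermitian idempotents enters essentially.
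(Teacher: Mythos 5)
Your argument is correct for MP-partial isometries and takes a genuinely different---and in that setting sharper---route than the paper's. The paper passes to the closed subalgebra generated by the hermitian idempotents, identifies it with $C(X)$ via the Vidav--Palmer theorem, uses the rigidity of projections ($\|e-f\|<1$ forces $e=f$) to conclude that $a_n^\dag a_n$ and $a_na_n^\dag$ are \emph{eventually equal} to $a^\dag a$ and $aa^\dag$, and then runs a continuity-of-inversion argument in the corner algebra $eAe$. You avoid all of that machinery: bicontractivity of hermitian idempotents (correctly derived from $\exp(i\pi e)=1-2e$), commutativity, and the exact identities
\[
p_n-p=(1-p)p_n-(1-p_n)p, \qquad b_n-b=-b(a_n-a)b_n+b(p_n-p)+(q_n-q)b_n
\]
(both of which check out) yield a quantitative Lipschitz bound where the paper obtains only continuity. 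Notably, you never use the ultrahermitian hypothesis, nor even that products of commuting hermitian idempotents are hermitian---which the paper's own remark identifies as the real input to its proof; commutativity of the hermitian idempotents alone suffices for your estimates. This is a genuine simplification and a formally weaker hypothesis.

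The one thing you must repair is scope. Rako\v{c}evi{\'c}'s conjecture concerns arbitrary sequences in $A_\subMP$ with $\sup_n\|a_n^\dag\|<\infty$, not MP-partial isometries, and there is no normalization reducing the former to the latter (rescaling $a_n$ inversely rescales $a_n^\dag$), so ``continuity on $\PIMP(A)$, hence the conjecture, is immediate'' is a non sequitur as written. Fortunately the gap is mechanical to close: your estimates never invoke contractivity of $a_n$ or $a$, only of $b_n$, $b$, and of the idempotents $1-p$, $1-p_n$, $1-q$, $1-q_n$ (the latter from bicontractivity, which holds for every hermitian idempotent). Setting $N:=\max\{\|a^\dag\|,\,\sup_n\|a_n^\dag\|\}$, which is finite exactly under the hypothesis of the conjecture, and replacing $\|b_n\|,\|b\|\le 1$ by $\le N$ throughout gives $\|p_n-p\|,\|q_n-q\|\le 2N\|a_n-a\|$ and then $\|a_n^\dag-a^\dag\|\le 5N^2\|a_n-a\|\to 0$; the converse direction of the conjecture is trivial. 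With that adjustment stated explicitly, your proof is complete and, in my view, preferable to the paper's for this particular theorem.
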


\subsection*{Acknowledgements}

The authors thank Tristan Bice and Benjamin Steinberg for valuable comments on reconstruction results for groupoids.

\section{Inverse semigroups of partial isometries} 
\label{sec:PI}

In this section, we first extend the theory of $\rm C^*$-cores (\cref{pgr:core}), Moore-Penrose invertibility (\cref{pgr:MP}), and MP-partial isometries (\cref{pgr:PI}) from the usual setting of unital \ba{s} to approximately unital \ba{s}.
This will allow us to deal with algebras of groupoids whose unit spaces are not compact. 

We characterize when the MP-partial isometries in a Banach algebra 
form an inverse semigroup under multiplication (\cref{prp:CharPIMP-InvSgp}), and we show that a sufficient condition is that hermitian idempotents are ultrahermitian and commute (\cref{prp:PIMP-InvSgp}).
Under this assumption, we verify Rako\v{c}evi{\'c}'s conjecture (\cref{con:RakocvevicsConjecture}) on the continuity of Moore-Penrose inversion (\cref{prp:ContinuityMP}), and we show that the set $\SInv(A)$ of homotopy classes of MP-partial isometries (\cref{dfn:SInv}) inherits the structure of an inverse semigroup.
All of this applies in particular to \lpoa{s} (\cref{prp:InvSgpHtpyPIMP}) for $p \neq 2$.

\begin{pgr}[Approximately unital Banach algebras]
Let $A$ be a Banach algebra. 
The collection of all contractive elements in $A$ will be denoted by $A_1$. 
Following \cite{BleOza15RealPosApproxIdBAlgs}, 
we say that~$A$ is \emph{approximately unital} if it contains a contractive approximate identity, that is, a net $(e_j)_{j\in J}$ in $A_1$ such that
\[
\lim_j \| a - ae_j \| = 0, \andSep
\lim_j \| a - e_j a \| = 0
\]
for every $a \in A$.
Every unital Banach algebra (always assumed with a contractive unit element) is approximately unital, as is every \ca.
We say that a homomorphism $\varphi \colon A \to B$ of approximately unital Banach algebras is \emph{approximately unital} if there exists some contractive approximate identity for $A$ whose image under $\varphi$ is a contractive approximate identity for $B$. Note that a homomorphism between unital Banach algebras is approximately unital if and only if it is unital.
\end{pgr}

\begin{pgr}[Minimal multiplier unitization]
\label{pgr:unitization}
Let $A$ be an approximately unital \ba. If $A$ is unital, we set $\widetilde{A}=A$.
If $A$ is not unital,
following \cite[Definition~1.8]{BlePhi19LpOpAlgApproxId1} we let $\widetilde{A}$ be the \emph{minimal multiplier unitization} of $A$, which is defined as the Banach algebra $\widetilde{A} := A + \CC 1$ equipped with the norm 
\[
\| a + \mu 1 \|_{\widetilde{A}} := \sup_{b \in A_1} \| ab + \mu b \|_{A},
\]
for $a \in A$ and $\mu \in \CC$. 
It is easy to see that if $(e_j)_{j\in J}$ is a contractive approximate identity in $A$, then 
\[
\| a + \mu 1 \|_{\widetilde{A}} = \lim_{j\in J} \| a e_j + \mu e_j \|_{A} = \sup_{j\in J} \| a e_j + \mu e_j \|_{A} ,
\]
for $a \in A$ and $\mu \in \CC$.

One can view $\widetilde{A}$ as a unital, closed subalgebra of the algebra $\Bdd(A)$ of bounded linear operators $A \to A$, with $a \in A$ acting by left multiplication.
The canonical map $A \to \widetilde{A}$ is isometric, and identifies $A$ with a closed, two-sided ideal in $\widetilde{A}$ such that $\widetilde{A}/A \cong \CC$.

Every contractive, approximately unital homomorphism $A \to B$ between approximately unital \ba{s} extends naturally to a contractive, unital homomorphism $\widetilde{A} \to \widetilde{B}$.
\end{pgr}

\begin{pgr}[Hermitian elements]\label{pgr:hermitian}
Let $A$ be a Banach algebra.
If $A$ is unital, then $a \in A$ is \emph{hermitian} if $\| \exp(ita) \| = 1$ for all $t \in \RR$.
We use $A_\subHerm$ to denote the set of hermitian elements in~$A$.
If $A$ is approximately unital, then we follow \cite[Definition~2.8]{BlePhi19LpOpAlgApproxId1} and say that an element in~$A$ is \emph{hermitian} if it belongs to $A_{\rm h} := A \cap \widetilde{A}_{\rm h}$.

If $A$ is unital, then $A_\subHerm$ is a closed, real subspace, and $A_\subHerm + iA_\subHerm$ is a closed (complex) subspace such that every element has a unique representation as $a+ib$ with $a,b \in A_\subHerm$, which allows one to define an involution by $(a+ib)^* := a-ib$ for $a,b \in A_\subHerm$;
see \cite[Section~5]{BonDun71NumRanges}.
These statements generalize easily to the case that $A$ is approximately unital.

An element in a \ca{} is hermitian if and only if it is self-adjoint.
\end{pgr}

While $A_\subHerm + iA_\subHerm$ is always a closed subspace, it is in general not a subalgebra unless hermitian elements in $A$ are closed under multiplication. 

\begin{dfn}
\label{pgr:core}
Let $A$ be an approximately unital \ba, and assume that $A_\subHerm$ is closed under multiplication. We call the closed subalgebra $A_\subHerm + iA_\subHerm$ the \emph{C*-core} of $A$, and denote it by $\core(A)$.
\end{dfn}

Next, we show that $\core(A)$ is a commutative \ca.

\begin{prp}
\label{prop:CoreAbelian}
Let $A$ be an approximately unital \ba, and assume that $A_\subHerm$ is closed under multiplication. Then $\core(A)$ is a commutative \ca.
\end{prp}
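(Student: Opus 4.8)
The plan is to reduce the statement to the Vidav--Palmer theorem. Write $B := \core(A) = A_\subHerm + iA_\subHerm$. This is indeed a subalgebra precisely because $A_\subHerm$ is closed under multiplication: expanding $(a+ib)(c+id) = (ac-bd) + i(ad+bc)$ with $a,b,c,d \in A_\subHerm$ shows that the product of two elements of $B$ again lies in $A_\subHerm + iA_\subHerm$, and $B$ is closed by the properties of $A_\subHerm + iA_\subHerm$ recorded in \cref{pgr:hermitian}. Since $B$ need not be unital when $A$ is non-unital, I would not apply Vidav--Palmer to $B$ directly, but instead pass to the minimal multiplier unitization $\widetilde{A}$ from \cref{pgr:unitization} and work first with $\core(\widetilde{A})$, descending to $A$ at the end.

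The first step is to identify the hermitian elements of $\widetilde{A}$. Since the quotient map $\widetilde{A} \to \widetilde{A}/A \cong \CC$ is a contractive unital homomorphism, it preserves hermiticity, so if $a + \mu 1 \in \widetilde{A}_\subHerm$ then $\mu \in \RR$; subtracting $\mu 1$ then gives $a \in A \cap \widetilde{A}_\subHerm = A_\subHerm$, whence $\widetilde{A}_\subHerm = A_\subHerm + \RR 1$. Combining this with the hypothesis, a direct expansion of $(a + s1)(b + t1)$ shows that $\widetilde{A}_\subHerm$ is again closed under multiplication. Consequently $\core(\widetilde{A}) = \widetilde{A}_\subHerm + i\widetilde{A}_\subHerm$ is a closed subalgebra of $\widetilde{A}$ containing $1 \in \widetilde{A}_\subHerm$, hence unital.

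The key step is to apply the Vidav--Palmer theorem (\cite{BonDun71NumRanges}) to $\core(\widetilde{A})$. For this one must verify that its own hermitian elements are exactly $\widetilde{A}_\subHerm$: as $\core(\widetilde{A})$ is a closed unital subalgebra carrying the inherited norm, the exponentials $\exp(itb)$ agree whether computed in $\core(\widetilde{A})$ or in $\widetilde{A}$, so an element of $\core(\widetilde{A})$ is hermitian in the subalgebra if and only if it is hermitian in $\widetilde{A}$; since $\widetilde{A}_\subHerm \subseteq \core(\widetilde{A})$, this yields $\core(\widetilde{A})_\subHerm = \widetilde{A}_\subHerm$ and therefore $\core(\widetilde{A}) = \core(\widetilde{A})_\subHerm + i\,\core(\widetilde{A})_\subHerm$. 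Vidav--Palmer then shows that $\core(\widetilde{A})$ is a \ca{} whose involution is $(h+ik)^* = h-ik$, matching the one from \cref{pgr:hermitian}. Commutativity now drops out of the hypothesis: for $h,k \in \widetilde{A}_\subHerm$ the product $hk$ is hermitian, so $hk = (hk)^* = k^* h^* = kh$; thus the self-adjoint elements commute, and as they span $\core(\widetilde{A})$, the algebra is commutative.

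Finally I would descend back to $A$. Tracking the scalar component of a general element $(a+ib) + (s+it)1$ of $\core(\widetilde{A})$ against $\widetilde{A}/A \cong \CC$ gives $\core(\widetilde{A}) \cap A = A_\subHerm + iA_\subHerm = \core(A)$, and this intersection is a closed, $*$-invariant subalgebra (in fact an ideal) of $\core(\widetilde{A})$, since $A$ is a closed two-sided ideal of $\widetilde{A}$ and the involution sends $a+ib$ to $a-ib$. A closed $*$-subalgebra of a commutative \ca{} is again a commutative \ca, which gives the claim. The main obstacle is essentially imported: the genuine analytic content sits entirely in the Vidav--Palmer theorem, so the work here is organizational, and the delicate point to get right is the stability of hermiticity when passing between $A$, $\widetilde{A}$, and the subalgebra $\core(\widetilde{A})$, as this is exactly what allows the Vidav--Palmer hypothesis to be met.
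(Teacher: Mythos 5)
Your proof is correct and follows essentially the same route as the paper's: pass to the minimal multiplier unitization, check that $\widetilde{A}_\subHerm$ is closed under multiplication, apply Vidav--Palmer to $\core(\widetilde{A})$, and descend to $\core(A)$, with commutativity coming from the fact that the self-adjoint elements are closed under multiplication. You simply fill in more of the routine verifications (e.g.\ $\widetilde{A}_\subHerm = A_\subHerm + \RR 1$ and the stability of hermiticity under passing to closed unital subalgebras) that the paper leaves implicit.
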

\begin{proof}
First, we note that hermitian elements in $\widetilde{A}$ are closed under multiplication, and therefore $\core(\widetilde{A})$ is a unital, closed subalgebra of $\widetilde{A}$.
By the Vidav-Palmer theorem (\cite[Theorem~6.9]{BonDun71NumRanges}), it follows that $\core(\widetilde{A})$ is a \ca{}, that is, $\|x^*x\|=\|x\|^2$ for all $x \in \core(\widetilde{A})$.
It follows that elements in $\core(A)$ satisfy the $\rm C^*$-identity as well.
Since a \ca{} is commutative if (and only if) its self-adjoint elements are closed under multiplication, it follows that $\core(A)$ is commutative.
\end{proof}

\begin{rmk}
\label{rmk:ContrHomCore}
If $\varphi \colon A \to B$ is an approximately unital, contractive homomorphism between approximately unital \ba{s}, then $\varphi(A_\subHerm) \subseteq B_\subHerm$.
If hermitian elements in $A$ and $B$ are closed under multiplication, then $\varphi$ induces a $*$-homomorphism $\core(A) \to \core(B)$.
\end{rmk}

\begin{pgr}[Moore-Penrose invertible elements]
\label{pgr:MP}
Following Rako\v{c}evi{\'c} \cite{Rak88MPInvBAlg}, we say that an element~$a$ in a unital Banach algebra $A$ is \emph{Moore-Penrose invertible} if there exists $b \in A$ such that
\[
a = aba, \quad
b = bab, \andSep
ab,ba \in A_\subHerm.
\]
By \cite[Lemma~2.1]{Rak88MPInvBAlg}, an element $b$ with these properties is unique (if it exists), and it is called the \emph{Moore-Penrose inverse} of $a$, denoted by $a^\dag$.
We use $A_\subMP$ to denote the collection of all Moore-Penrose invertible elements in $A$.
	
Now let $A$ be an approximately unital \ba.
If an element $a \in A$ is Moore-Penrose invertible in $\widetilde{A}$, then $a^\dag$ automatically belongs to $A$ since $a^\dag = a^\dag a a^\dag$, and $A$ is an ideal in $\widetilde{A}$.
Therefore, an element $a \in A$ is Moore-Penrose invertible in $\widetilde{A}$ if and only if there exists $b \in A$ such that $a = aba$, $b = bab$, and $ab,ba \in A_\subHerm$ ($=A \cap \widetilde{A}_\subHerm$).
We will thus call an element in an approximately unital \ba{}~$A$ \emph{Moore-Penrose invertible} if it belongs to $A_\subMP := A \cap \widetilde{A}_\subMP$.
\end{pgr}

\begin{pgr}[Partial isometries]
\label{pgr:PI}
Following Mbekhta \cite[Definitions~4.1, 4.3]{Mbe04PartIsom}, a contractive operator~$a$ on a Banach space $E$ is called a \emph{partial isometry} if there exists a contractive operator $b$ on $E$ such that $a=aba$ and $b=bab$.
We note that the element~$b$ may not be uniquely determined. 
Further, $a \in \Bdd(E)$ is called a \emph{MP-partial isometry} if it is contractive and admits a contractive Moore-Penrose inverse in $\Bdd(E)$.
Generalizing this terminology, we introduce the following:\end{pgr}

\begin{dfn}
Let $A$ be an approximately unital \ba. We define the set of partial isometries (MP-partial isometries) in $A$ as
\begin{align*}
\PI(A) &:= \big\{ a \in A_1 : \text{there exists $b \in A_1$ such that $a=aba$ and $b=bab$} \big\}, \\
\PIMP(A) &:= \big\{ a \in A_\subMP : \|a\|, \|a^\dag\| \leq 1 \big\}.
\end{align*}
\end{dfn}

For \ca{s}, the above definition of a partial isometry agrees with the usual definition ($a=aa^*a$), and also with the notion of MP-partial isometry; see \cite[Theorem~3.1]{Mbe04PartIsom}.

The following is straightforward to check.

\begin{rmk}
\label{rem:ContrHomPIMP}
Let $\varphi\colon A\to B$ be an approximately unital, contractive homomorphism between approximately unital Banach algebras~$A$ and~$B$.
Then
\[
\varphi(\PIMP(A))\subseteq \PIMP(B).
\]
\end{rmk}

\begin{pgr}[Inverse semigroups]
Let $S$ be a semigroup.
An element $t \in S$ is called a \emph{generalized inverse} of an element $s \in S$ if $s=sts$ and $t=tst$.
One says that $S$ is an \emph{inverse semigroup} if every $s \in S$ admits a unique generalized inverse, denoted~$s^\dag$.
Then  $(st)^\dag = t^\dag s^\dag$ for all $s,t\in S$, and the map $s \mapsto s^\dag$ is an involution on $S$.
Further, given $s \in S$, the elements $ss^\dag$ and $s^\dag s$ are idempotent.
\end{pgr}

\begin{rmk}\label{rmk:CharInvSmgp}
It is known that a semigroup $S$ is an inverse semigroup if and only if every element admits a generalized inverse and idempotent elements in $S$ commute;
see \cite[Theorem~V.1.2]{How76IntroSgpThy}.\end{rmk}

Given an approximately unital \ba{} $A$, note that a contractive element in $A$ is a partial isometry if and only if it admits a generalized inverse in the multiplicative semigroup $A_1$.
This suggests that MP-partial isometries can be used to form inverse semigroups.
The next result describes when this happens.

\begin{prp}
\label{prp:Normalizer-InvSgp}
Let $A$ be an approximately unital \ba, and let $E$ be a family of pairwise commuting hermitian idempotents. 
Then the set of normalizers
\[
N_E := \big\{ a \in \PIMP(A) : a^\dag a \in E, aa^\dag \in E, a^\dag E a \subseteq E \text{ and } a E a^\dag \subseteq E \big\}
\]
is an inverse semigroup under multiplication.
\end{prp}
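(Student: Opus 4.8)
The plan is to verify the two conditions in the characterization of inverse semigroups recalled in \cref{rmk:CharInvSmgp}: that every element of $N_E$ admits a generalized inverse \emph{inside} $N_E$, and that the idempotents of $N_E$ commute. The first condition will follow once I show that $N_E$ is closed under multiplication and under the Moore--Penrose operation $a \mapsto a^\dag$; indeed, the relations $a = aa^\dag a$ and $a^\dag = a^\dag a a^\dag$ exhibit $a^\dag$ as a generalized inverse of $a$ in the multiplicative semigroup, so it suffices to know that $a^\dag \in N_E$. Closure under $a \mapsto a^\dag$ is immediate from the symmetry of the defining data: an MP-partial isometry satisfies $(a^\dag)^\dag = a$, and the four membership conditions defining $N_E$ for $a^\dag$ are precisely those for $a$ with the roles of $a$ and $a^\dag$ interchanged.

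For closure under multiplication I will show that, for $a,b \in N_E$, the product $ab$ lies in $N_E$ with $(ab)^\dag = b^\dag a^\dag$. Writing $p = a^\dag a$, $q = aa^\dag$, $r = b^\dag b$, $s = bb^\dag$ (all in $E$), the identities $ap = a = qa$ and $sb = b = br$ reduce $(ab)(b^\dag a^\dag)(ab)$ to $a s p b$; since $p$ and $s$ both lie in the pairwise-commuting family $E$, this equals $(ap)(sb) = ab$, and symmetrically $(b^\dag a^\dag)(ab)(b^\dag a^\dag) = b^\dag a^\dag$. Moreover $(ab)(ab)^\dag = a s a^\dag \in aEa^\dag \subseteq E$ and $(ab)^\dag(ab) = b^\dag p b \in b^\dag E b \subseteq E$ are hermitian idempotents, so $ab \in \PIMP(A)$ with $(ab)^\dag = b^\dag a^\dag$ (contractivity of $ab$ and $b^\dag a^\dag$ being automatic by submultiplicativity). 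The two remaining conjugation conditions for $ab$ follow by composing those for $a$ and $b$, for instance $(ab)^\dag E (ab) = b^\dag(a^\dag E a)b \subseteq b^\dag E b \subseteq E$.

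The crux is showing that the idempotents of $N_E$ commute, and here lies the only genuinely delicate point. Unlike in a \ca{}, a contractive idempotent in a general \ba{} need not be hermitian, so I cannot simply argue that idempotents of $N_E$ are projections lying in $E$. Instead I will exploit a purely algebraic identity: if $x \in \PIMP(A)$ satisfies $x^2 = x$, then
\[
x^\dag = x^\dag x x^\dag = x^\dag x^2 x^\dag = (x^\dag x)(x x^\dag),
\]
so $x^\dag$ is a product of two commuting idempotents of $E$, hence is itself an idempotent, and it lies in $N_E$ by the closure under $a \mapsto a^\dag$ established above. Applying the same identity to $x = e^\dag$ for an idempotent $e \in N_E$, and using $(e^\dag)^\dag = e$, yields $e = (e e^\dag)(e^\dag e)$, again a product of two elements of $E$.

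Consequently every idempotent $e \in N_E$ has the form $e = q_e p_e$ with $p_e = e^\dag e$ and $q_e = e e^\dag$ in $E$. Given two idempotents $e, f \in N_E$, all four factors $p_e, q_e, p_f, q_f$ belong to the pairwise-commuting family $E$, so $ef = q_e p_e q_f p_f = q_f p_f q_e p_e = fe$. Thus the idempotents of $N_E$ commute, and \cref{rmk:CharInvSmgp} shows that $N_E$ is an inverse semigroup. I expect the multiplication-closure computation to be routine bookkeeping, with the main obstacle being the idempotent-commuting step; the identity $x^\dag = (x^\dag x)(x x^\dag)$ for idempotents is exactly what lets me bypass any hermitian-ness question and reduce commutativity to the commutativity already assumed of $E$.
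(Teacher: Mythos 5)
Your proof is correct and follows essentially the same route as the paper's: both verify closure under multiplication and Moore--Penrose inversion and then reduce to commutativity of idempotents via the key identity $e^\dag = (e^\dag e)(ee^\dag)$ for an idempotent $e \in N_E$. The only (immaterial) difference is in the endgame: the paper pushes the identity one step further to conclude $e^\dag = e$ and hence $e = e^\dag e \in E$, whereas you stop at the factorization $e = (ee^\dag)(e^\dag e)$ through commuting elements of $E$, which suffices equally well.
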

\begin{proof}
It is easy to verify that $N_E$ is closed under multiplication and under Moore-Penrose inversion.
Thus, $N_E$ is a semigroup with generalized inverses.
By \cref{rmk:CharInvSmgp}, it suffices to show that idempotents in $N_E$ commute.
For this, it suffices to show that the idempotents of $N_E$ belong to $E$. Let $e\in N_E$ be an idempotent.
\medskip

First, we show that $e^\dag=e$.
By construction, $e^\dag e$ and $ee^\dag$ belong to $E$ and therefore commute.
We have $(e^\dag)^2 = e^\dag$, and thus
\[
e^\dag 
= e^\dag e e^\dag
= \big( e^\dag e \big) \big( e e^\dag \big)
= \big( e e^\dag \big) \big( e^\dag e \big)
= e e^\dag e
= e.
\]

Now, the normalization condition that defines $N_E$ gives, in particular that $e^\dag e=e^2=e$ belongs to $E$, as desired.
\end{proof}

\begin{thm}
\label{prp:CharPIMP-InvSgp}
Let $A$ be an approximately unital \ba.
Then $\PIMP(A)$ is an inverse semigroup under multiplication if and only if hermitian idempotents in $A$ commute and $a^\dag ea$ is a hermitian idempotent for every $a \in \PIMP(A)$ and every hermitian idempotent~$e$.
\end{thm}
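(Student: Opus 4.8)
The plan is to prove the iff in Theorem~\ref{prp:CharPIMP-InvSgp} by using the characterization of inverse semigroups via \cref{rmk:CharInvSmgp}: a semigroup with generalized inverses is an inverse semigroup precisely when its idempotents commute. So the whole argument reduces to two tasks: (i) showing $\PIMP(A)$ is closed under multiplication (so that it is a semigroup with generalized inverses), and (ii) analyzing when its idempotents commute. Throughout, I would work inside $\widetilde{A}$, so that the Moore-Penrose inverse and hermitian idempotents are available unambiguously, and use that for $a\in\PIMP(A)$ the elements $aa^\dag$ and $a^\dag a$ are hermitian idempotents of norm $\le 1$.

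\emph{The forward direction} is the easier one. Suppose $\PIMP(A)$ is an inverse semigroup. Any hermitian idempotent $e$ with $\|e\|\le 1$ lies in $\PIMP(A)$ (it is its own Moore-Penrose inverse, since $e=eee$ and $ee=e$ is hermitian), and in an inverse semigroup the idempotents are exactly the elements of the form $s^\dag s$, which are known to commute. The subtlety is that I must first check that a hermitian idempotent in $A$ is automatically contractive so that it genuinely belongs to $\PIMP(A)$; this follows from the Vidav-Palmer machinery already invoked in \cref{prop:CoreAbelian}, since hermitian idempotents live in a \ca{} and there are norm $1$. Granting this, commutativity of hermitian idempotents is immediate. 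For the second required condition, I would take $a\in\PIMP(A)$ and a hermitian idempotent $e$, observe $e\in\PIMP(A)$, and use closure under multiplication and inversion to conclude $a^\dag e a = a^\dag e a\in\PIMP(A)$; a short computation using $a^\dag=a^\dag aa^\dag$ shows this element is an idempotent, and every idempotent in an inverse semigroup coincides with its own generalized inverse and is therefore hermitian (being of the form $s^\dag s$).

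\emph{The converse direction} is where the work lies, and it is the main obstacle. Assuming hermitian idempotents commute and that $a^\dag e a$ is always a hermitian idempotent, I must show $\PIMP(A)$ is closed under multiplication. Given $a,b\in\PIMP(A)$, the natural candidate for a generalized inverse of $ab$ is $b^\dag a^\dag$. The difficulty is verifying both the Moore-Penrose equations $ab=(ab)(b^\dag a^\dag)(ab)$ and $b^\dag a^\dag=(b^\dag a^\dag)(ab)(b^\dag a^\dag)$, together with the hermiticity of the products $(ab)(b^\dag a^\dag)$ and $(b^\dag a^\dag)(ab)$. The key algebraic trick will be to insert the hypothesis: $(ab)(b^\dag a^\dag)=a(bb^\dag)a^\dag$, and $a^\dag(bb^\dag)a$ is by assumption a hermitian idempotent; so I would rewrite $a(bb^\dag)a^\dag$ in terms of the hermitian idempotent $aa^\dag$ and the conjugated idempotent, and then use the \emph{commutativity} of hermitian idempotents to manipulate these factors freely. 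Concretely, the reduction equation $ab=a(bb^\dag)b=ab$ and the interplay of $aa^\dag$ with the hermitian idempotent $a(bb^\dag)a^\dag$-type terms should force the four Moore-Penrose identities once one knows all the relevant idempotents pairwise commute. Establishing hermiticity of $(ab)(b^\dag a^\dag)$ is exactly where the hypothesis ``$a^\dag e a$ is a hermitian idempotent'' is indispensable, since it is what converts a two-sided conjugate of a hermitian idempotent back into a hermitian element.

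Once multiplicative closure is in hand, \cref{rmk:CharInvSmgp} finishes the argument: $\PIMP(A)$ is a semigroup in which every element $a$ has the generalized inverse $a^\dag$, and its idempotents are hermitian idempotents (again of the form $a^\dag a$), which commute by hypothesis. I would be careful to record, as a small lemma within the proof, that the idempotents of the multiplicative semigroup $\PIMP(A)$ are precisely the hermitian idempotents of norm $\le 1$; the nontrivial inclusion is that a semigroup idempotent $e=e^2\in\PIMP(A)$ satisfies $e^\dag=e$ (so $e$ is hermitian), which follows from the computation in the proof of \cref{prp:Normalizer-InvSgp} verbatim. The anticipated hard part throughout is the bookkeeping of commutation relations among the various hermitian idempotents $aa^\dag$, $a^\dag a$, $a^\dag e a$, and $aea^\dag$, ensuring that every rearrangement is licensed by the standing commutativity hypothesis.
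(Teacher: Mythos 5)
Your argument is correct and follows essentially the same route as the paper: the forward direction uses the inverse-semigroup identities to get commutativity of hermitian idempotents and $(a^\dag e a)^\dag = a^\dag e a = (a^\dag e a)^2$, while your converse (verifying $(ab)^\dag = b^\dag a^\dag$ via commutativity of $a^\dag a$ and $bb^\dag$, using the conjugation hypothesis for hermiticity of $a(bb^\dag)a^\dag$ and $b^\dag(a^\dag a)b$, and showing semigroup idempotents are hermitian) is exactly the content of \cref{prp:Normalizer-InvSgp} applied with $E$ the family of all hermitian idempotents, which the paper simply cites. No gaps beyond the level of detail one would expect to fill in.
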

\begin{proof}
The backward implication holds by \cref{prp:Normalizer-InvSgp}.
Conversely, assume that $\PIMP(A)$ is an inverse semigroup under multiplication.
Since hermitian idempotents in $A$ are idempotent elements in $\PIMP(A)$, and since idempotent elements in an inverse semigroup commute, we get that hermitian idempotents in $A$ commute.
Further, given a hermitian idempotent $e\in A$ and $a \in \PIMP(A)$, it follows that $(a^\dag ea)^\dag = a^\dag ea = (a^\dag ea)^2$, which implies that $a^\dag ea$ is a hermitian idempotent.
\end{proof}

We now consider a class of hermitian idempotents for which the normalization condition in \cref{prp:CharPIMP-InvSgp} simplifies.
This will be used in the proof of \cref{prp:PIMP-InvSgp}.

Let $e$ be an idempotent in a unital \ba{} $A$, and set $\bar{e}:=1-e$, which is also an idempotent.
For $t \in \RR$, one computes that
\[
\exp(ite) = \bar{e} + \exp(it) e.
\]
It follows that $e$ is hermitian if and only if $\| \bar{e} + \beta e \| = 1$ for every $\beta \in \TT$.
Since multiplication by an element in $\TT$ is an isometry, it follows that $e$ is hermitian if and only if $\| \alpha \bar{e} + \beta e \| = 1$ for every $\alpha,\beta \in \TT$. 
(One says that $e$ is `bicircular'.)
Using this, one can show that $e$ is hermitian if and only if $\| \alpha \bar{e} + \beta e \| \leq 1$ for every $\alpha,\beta \in \CC$ with $|\alpha|,|\beta|\leq 1$.
Spain \cite{Spa17NumRangeSimpleCompression} introduced a strengthening of this concept:

\begin{dfn}
\label{pgr:Ultrahermitian}
An idempotent $e$ in a unital \ba{} $A$ is \emph{ultrahermitian} if $\big\| eae+(1-e)b(1-e)\big\| \leq 1$ for every $a,b \in A_1$.

We say that an element in an approximately unital \ba{} is an \emph{ultrahermitian idempotent} if its image in the minimal multiplier unitization is.
\end{dfn}

Ultrahermitian idempotents are hermitian, and Spain \cite{Spa17NumRangeSimpleCompression} gives an example of a hermitian idempotent that is not ultrahermitian. 
For idempotents on $L^p$-spaces, we show next that the two notions agree:

\begin{lma}
\label{lma:HerIdemLpUltra}
Let $p\in [1,\infty)$ and let $E$ be an $L^p$-space. 
Then every hermitian idempotent in $\mathcal{B}(E)$ is ultrahermitian.
\end{lma}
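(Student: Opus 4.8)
The plan is to reduce the defining inequality of ultrahermiticity to a single \emph{isometric $\ell^p$-splitting} induced by the idempotent, and then to read off that splitting from the known structure of hermitian idempotents on $L^p$-spaces.

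So fix a nonzero hermitian idempotent $e \in \mathcal{B}(E)$ and put $\bar e := 1 - e$. As recorded above, $\| \bar e + \beta e \| = 1$ for all $\beta \in \TT$; taking $\beta = -1$ gives $\|1 - 2e\| = 1$, whence $\|e\| = \tfrac12\|1 - (1-2e)\| \le 1$ and, symmetrically, $\|\bar e\| \le 1$. The key claim I would isolate is the identity
\[
\|\xi\|^p = \|e\xi\|^p + \|\bar e \xi\|^p \qquad (\xi \in E). \tag{$\star$}
\]
Granting $(\star)$, the lemma follows formally. Indeed, let $a,b \in \mathcal{B}(E)_1$ and set $T := eae + \bar e b \bar e$. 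For $\xi \in E$, the vector $eae\xi$ lies in $eE$ and $\bar e b \bar e\xi$ lies in $\bar e E$, so $e(T\xi) = eae\xi$ and $\bar e(T\xi) = \bar e b \bar e\xi$; applying $(\star)$ to $T\xi$ therefore gives $\|T\xi\|^p = \|eae\xi\|^p + \|\bar e b \bar e\xi\|^p$. Since $\|eae\xi\| = \|ea(e\xi)\| \le \|e\|\,\|a\|\,\|e\xi\| \le \|e\xi\|$ and likewise $\|\bar e b \bar e\xi\| \le \|\bar e\xi\|$, a second use of $(\star)$ yields $\|T\xi\|^p \le \|e\xi\|^p + \|\bar e\xi\|^p = \|\xi\|^p$. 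Hence $\|T\| \le 1$, which is exactly the ultrahermitian inequality.

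It remains to prove $(\star)$, and here I would distinguish two cases. For $p = 2$ the algebra $\mathcal{B}(E)$ is a \ca, so the hermitian idempotent $e$ is a self-adjoint, hence orthogonal, projection, and $(\star)$ is just the Pythagorean identity for $E = eE \oplus \bar e E$. For $p \neq 2$ I would invoke Lamperti's description of the isometries of an $L^p$-space: the one-parameter isometry group $\exp(ite) = \bar e + e^{it} e$ consists of disjointness-preserving operators, which forces $e$ to be a band projection, i.e.\ multiplication by the characteristic function $\chi_Y$ of a measurable set $Y$ (equivalently, the hermitian operators on $L^p$ with $p \neq 2$ are precisely the real-valued multiplication operators). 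Then $e\xi$ and $\bar e\xi$ are supported on the disjoint sets $Y$ and its complement, so $|\xi|^p = |e\xi|^p + |\bar e\xi|^p$ holds $\mu$-almost everywhere, and integrating gives $(\star)$.

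I expect the main obstacle to be precisely the case $p \neq 2$: pinning down an a priori abstract hermitian idempotent as a band projection is where the geometry of $L^p$-spaces genuinely enters, through Lamperti's theorem, and it has no analogue at $p = 2$, where non-band orthogonal projections are perfectly hermitian. By contrast, the contractivity of $e$ and $\bar e$, the corner estimate for $eae$ and $\bar e b\bar e$, and the deduction of the ultrahermitian bound from $(\star)$ are purely formal, using only $e^2 = e$ and the $\ell^p$-orthogonality encoded in $(\star)$.
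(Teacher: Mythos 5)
Your proof is correct and follows essentially the same route as the paper: identify the hermitian idempotent as multiplication by an indicator function (via Lamperti's theorem for $p\neq 2$), and then deduce the contractivity of $eae+\bar e b\bar e$ from the resulting isometric splitting $E\cong eE\oplus_p \bar e E$ --- your identity $(\star)$ is just the paper's observation that the operator is ``diagonal'' with norm equal to the maximum of the two corner norms. The only difference is cosmetic: you treat $p=2$ separately via orthogonal projections, which is a reasonable precaution since hermitian idempotents on a Hilbert space need not be multiplication operators, whereas the paper invokes its cited structure results uniformly.
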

\begin{proof} 
Let $e\in \mathcal{B}(E)$ be a hermitian idempotent and let $a,b\in \mathcal{B}(E)$ be contractions. 
Use \cite[Proposition~2.6]{ChoGarThi24LpRigidity} to find a localizable measure $(X,\mu)$ such that $L^p(X,\mu)$ is isometrically isomorphic to $E$; we will identify these two Banach spaces from now on. 
By \cite[Proposition~2.7]{ChoGarThi24LpRigidity}, there exists a measurable subset $Y\subseteq X$ such that $e$ is the multiplication operator associated to the indicator function of $Y$. 
Let $\xi\in L^p(X,\mu)$. 
Then $eae(\xi)$ and $\bar{e}b\bar{e}(\xi)$ are supported in $Y$ and $Y^c:=X\setminus Y$, respectively.
Identifying $L^p(X,\mu)$ with $L^p(Y,\mu_Y)\oplus_p L^p(Y^c, \mu_{Y^c})$, it follows that $eae+\bar{e}b\bar{e}$ is a diagonal operator. 
Thus
\[
\|eae+\bar{e}b\bar{e}\|
= \max\big\{ \|eae\|, \|\bar{e}b\bar{e}\| \big\}
\leq 1.\qedhere
\]
\end{proof}

\begin{thm}
\label{prp:PIMP-InvSgp}
Let $A$ be an approximately unital \ba{} in which hermitian idempotents are ultrahermitian and commute.
Then $\PIMP(A)$ is an inverse semigroup under multiplication.
\end{thm}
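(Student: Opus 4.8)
The plan is to reduce the statement to the criterion established in \cref{prp:CharPIMP-InvSgp}. That criterion says $\PIMP(A)$ is an inverse semigroup precisely when hermitian idempotents commute (which we are assuming) and when $a^\dag e a$ is a hermitian idempotent for every $a \in \PIMP(A)$ and every hermitian idempotent $e$. The commutativity hypothesis handles the first condition for free, so the entire task becomes verifying the normalization condition $a^\dag e a \in A_{\subHerm}$ and $(a^\dag e a)^2 = a^\dag e a$. The extra strength in the hypothesis—that hermitian idempotents are not merely hermitian but \emph{ultrahermitian}—is what I expect to drive the argument, since ultrahermiticity is the feature that lets one control norms of compressions rather than merely exponentials.

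First I would reduce to the unital case by passing to $\widetilde{A}$: since $a \in \PIMP(A)$ means $a \in \PIMP(\widetilde{A})$ with $a^\dag \in A$, and hermitian idempotents in $A$ are by definition those in $\widetilde{A}$ (and remain ultrahermitian there by \cref{pgr:Ultrahermitian}), it suffices to prove the normalization condition inside $\widetilde{A}$. So assume $A$ is unital. Fix $a \in \PIMP(A)$ with Moore-Penrose inverse $a^\dag$, and a hermitian idempotent $e$. Set $p := a a^\dag$ and $q := a^\dag a$, which are hermitian idempotents because $a$ is MP-invertible; by hypothesis these commute with $e$ and with each other, and are ultrahermitian. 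The idempotency of $a^\dag e a$ should follow formally: using $a = aqa$-type relations, namely $a q = a$, $q a^\dag = a^\dag$, and that $e$ commutes with $q$, one computes
\[
(a^\dag e a)(a^\dag e a) = a^\dag e (a a^\dag) e a = a^\dag e p e a = a^\dag (e p e) a,
\]
and since $e$ and $p$ commute this collapses to $a^\dag e p a = a^\dag e a$ (using $e p a = e a a^\dag a = e a$). The genuine content is hermiticity.

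The hard part will be showing $a^\dag e a$ is hermitian, and this is where ultrahermiticity must be exploited. The strategy is to realize $a$ as a product absorbing the idempotent structure: because $a$ is a contraction with contractive MP-inverse, the operators $p = aa^\dag$ and $q = a^\dag a$ sit as the initial and final projections, and one expects that compressing $e$ through $a$ produces another idempotent whose norm and that of its complement are simultaneously bounded by $1$—precisely the configuration that \cref{pgr:Ultrahermitian} certifies as hermitian. Concretely, I would try to prove that $f := a^\dag e a$ satisfies $\|\,f x f + (1-f) y (1-f)\,\| \le 1$ for all contractions $x,y$, by rewriting these compressions in terms of compressions of $\widetilde{A}$-contractions through the ultrahermitian idempotents $e$ and $1-e$ and through the contractions $a, a^\dag$; the ultrahermiticity of $e$ applied to suitable contractive arguments, combined with contractivity of $a$ and $a^\dag$, should deliver the bound. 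The delicate bookkeeping is ensuring that after conjugating by $a$ and $a^\dag$ the relevant elements remain contractions and that the idempotent $f$ genuinely decomposes the relevant compression additively; I would verify the identities $f a^\dag = a^\dag e$ and $a f = e a$ (up to the projections $p,q$) to reorganize the expression into a single application of the ultrahermitian inequality for $e$. Once $f$ is shown hermitian and idempotent, \cref{prp:CharPIMP-InvSgp} closes the argument.
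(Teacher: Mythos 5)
The reduction to \cref{prp:CharPIMP-InvSgp}, the passage to the unitization, and the verification that $a^\dag e a$ is idempotent all match the paper and are fine. The gap is in the hermiticity step, which you correctly identify as the genuine content but do not actually carry out. Your plan is to prove the compression bound $\| f x f + (1-f) y (1-f) \| \leq 1$ for $f := a^\dag e a$ directly, invoking the ultrahermiticity of $e$. This runs into trouble: writing $q := a^\dag a$ and $\bar e := 1-e$, one has $1 - a^\dag e a = (1-q) + a^\dag \bar e a$, so $(1-f) y (1-f)$ expands into four terms, including cross terms such as $(1-q)\, y\, a^\dag \bar e a$ that are not compressions by $e$ or $\bar e$ and are not controlled by applying ultrahermiticity of $e$ to $a x a^\dag$ and $a y a^\dag$. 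You flag this as ``delicate bookkeeping,'' but it is precisely where the argument is missing, and I do not see how to close it along the route you describe. You are also aiming at a stronger conclusion than needed ($a^\dag e a$ ultrahermitian rather than merely hermitian).

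The paper's proof uses a different and much cleaner device, and applies ultrahermiticity to a different idempotent: with $f := a^\dag a$ and $\bar f := 1-f$, one computes
\[
\exp\big(it\, a^\dag e a\big)
= (1 - a^\dag a) + a^\dag \exp(ite)\, a
= \bar f \cdot 1 \cdot \bar f + f\big(a^\dag \exp(ite)\, a\big)f ,
\]
where the first equality uses that $a^\dag e a$ is idempotent. Since $e$ is hermitian, $\exp(ite)$ is a contraction, hence so is $a^\dag \exp(ite) a$; a single application of the ultrahermitian property of $f = a^\dag a$ (with the contraction $a^\dag\exp(ite)a$ in one corner and $1$ in the other) then gives $\|\exp(it\, a^\dag e a)\| \leq 1$ for all $t$, i.e.\ $a^\dag e a$ is hermitian. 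Note that only the \emph{hermiticity} of $e$ is used, while the \emph{ultrahermiticity} is exploited for $a^\dag a$ --- the opposite of what your sketch proposes. To repair your proof, replace the compression-inequality strategy with this exponential identity.
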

\begin{proof}
To verify the conditions of \cref{prp:CharPIMP-InvSgp}, let $a \in \PIMP(A)$ and let $e \in A$ be a hermitian idempotent.
We need to shown that $a^\dag ea$ is hermitian.
By working in the minimal multiplier unitization of $A$, we may assume that $A$ is unital.
Set $f := a^\dag a$ and $\bar{f}:=1-f$.
Since $aa^\dag$ commutes with $e$, the element $a^\dag e a$ is an idempotent.
Given $t \in \RR$, it follows that
\[
\exp(it a^\dag e a)
= (1-a^\dag a) + a^\dag \exp(it e) a
= \bar{f} + f\big(a^\dag \exp(it e) a\big)f.
\]
Since $e$ is hermitian, it follows that $\exp(ite)$ is contractive.
By definition, $a^\dag$ and $a$ are contractive as well, and hence so is $a^\dag \exp(it e) a$.
Since $f$ is an ultrahermitian idempotent, it follows that $\exp(it a^\dag e a)$ is contractive, which implies that $a^\dag e a$ is hermitian.
Since hermitian idempotents in $A$ commute, we can now apply \cref{prp:CharPIMP-InvSgp} to deduce that $\PIMP(A)$ is an inverse semigroup under multiplication.
\end{proof}

\begin{rmk}
Let $A$ be an approximately unital \ba{} whose hermitian elements are closed under multiplication, and assume that the spectrum of the $\rm C^*$-core is connected.
Then $\PIMP(A) = \{0\}$ if $A$ is non-unital.
If $A$ is unital, then $\PIMP(A)$ consists of $0$ together with the group of invertible isometries in $A$. 
\end{rmk}

\begin{pgr}[$L^p$-operator algebras]
\label{pgr:LPOA}
Let $p \in [1,\infty)$.
A \ba{} $A$ is an \emph{$L^p$-operator algebra} if there exists an isometric representation of $A$ on some $L^p$-space.
If $A$ is an approximately unital \ba{} that admits a nondegenerate, isometric representation $\varphi\colon A \to \Bdd(E)$ on some $L^p$-space $E$, then it follows from \cite[Lemma~2.24]{BlePhi19LpOpAlgApproxId1} that $\varphi$ extends to a unital, isometric representation of $\widetilde{A} \to \Bdd(E)$, and in particular $\widetilde{A}$ is an \lpoa{} as well, and $A$ is \emph{unitizable} in the sense of \cite{GarTin25arX:CKUniquLpGraph}.

If $A$ is an approximately unital \lpoa{}, and $p \neq 1$, then $A$ admits a nondegenerate isometric representation on some $L^p$-space by \cite[Theorem~4.3]{GarThi20ExtendingRepr}, and it follows that $\widetilde{A}$ is an \lpoa{}.
For $p=1$ this is not clear.
\end{pgr}

The following lemma is straightforward, and we record it here, without proof, for future use. 

\begin{lma}
\label{prp:MapHermIdem}
Let $\varphi \colon A \to B$ be an approximately unital homomorphism between approximately unital \ba{s}.
Then:
\begin{enumerate}
\item
If $\varphi$ is contractive and $e$ is a hermitian (ultrahermitian) idempotent in $A$, then $\varphi(e)$ is a hermitian (ultrahermitian) idempotent in $B$.
\item
If $\varphi$ is isometric, then $e \in A$ is a hermitian (ultrahermitian) idempotent in $A$ whenever $\varphi(e)$ is a hermitian (ultrahermitian) idempotent in $B$.
\end{enumerate}
\end{lma}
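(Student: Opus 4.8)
The plan is to reduce to the unital case and then argue separately with the exponential functional calculus (for hermiticity) and with the defining inequality of ultrahermiticity. In both parts of the statement $\varphi$ is contractive, so by \cref{pgr:unitization} it extends to a contractive, unital homomorphism $\widetilde\varphi\colon\widetilde A\to\widetilde B$ restricting to $\varphi$ on $A$; moreover $\widetilde\varphi$ is isometric whenever $\varphi$ is, which one reads off from the formula $\|a+\mu 1\|_{\widetilde A}=\lim_j\|ae_j+\mu e_j\|_A$ of \cref{pgr:unitization} together with the fact that $(\varphi(e_j))_j$ is a contractive approximate identity for $B$. Since $\varphi$ is multiplicative, $\varphi(e)=\widetilde\varphi(e)$ is automatically an idempotent, and by \cref{pgr:hermitian} and \cref{pgr:Ultrahermitian} hermiticity and ultrahermiticity are detected in the respective unitizations; writing $f:=\widetilde\varphi(e)$, it therefore suffices to work with the unital homomorphism $\widetilde\varphi$. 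For the hermitian assertions I would use that a bounded homomorphism commutes with the exponential series, so $\exp(itf)=\widetilde\varphi(\exp(ite))$ for all $t\in\RR$. If $e$ is hermitian and $\varphi$ is contractive, then $\|\exp(itf)\|\le 1$; as $\exp(itf)$ is invertible with inverse $\exp(-itf)$ of norm at most $1$, submultiplicativity forces $\|\exp(itf)\|=1$, so $f$ is hermitian. If instead $\varphi$ is isometric, then $\|\exp(ite)\|=\|\exp(itf)\|$, and hermiticity of $f$ gives hermiticity of $e$.

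For the ultrahermitian reflecting direction in (2) I would argue straight from \cref{pgr:Ultrahermitian}: assuming $\widetilde\varphi$ isometric and $f$ ultrahermitian, any contractions $a,b\in\widetilde A_1$ have contractive images $\widetilde\varphi(a),\widetilde\varphi(b)$, and
\[
\big\|\,eae+(1-e)b(1-e)\,\big\|_{\widetilde A}
=\big\|\,f\widetilde\varphi(a)f+(1-f)\widetilde\varphi(b)(1-f)\,\big\|_{\widetilde B}
\le 1 ,
\]
where the equality uses that $\widetilde\varphi$ is an isometric homomorphism with $\widetilde\varphi(e)=f$, and the inequality uses ultrahermiticity of $f$. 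Hence $e$ is ultrahermitian, completing (2).

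The step I expect to be the main obstacle is the ultrahermitian case of (1), where $\varphi$ is only assumed contractive. The same computation shows $\|f\widetilde\varphi(a)f+(1-f)\widetilde\varphi(b)(1-f)\|\le 1$ for $a,b\in\widetilde A_1$, but ultrahermiticity of $f$ requires $\|fa'f+(1-f)b'(1-f)\|\le 1$ for \emph{all} contractions $a',b'\in\widetilde B_1$, which need not lie in the image $\widetilde\varphi(\widetilde A)$; since a contractive homomorphism need be neither surjective nor isometric, the pullback argument does not by itself close the gap (indeed, composing a non-ultrahermitian hermitian idempotent with a map out of $\CC\oplus_\infty\CC$ shows that mere contractivity is too weak in general). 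I would therefore treat this case under the structure actually used in the paper: when $A$ and $B$ act isometrically on $L^p$-spaces, \cref{lma:HerIdemLpUltra} shows that every hermitian idempotent is already ultrahermitian, so the ultrahermitian case of (1) reduces to the hermitian case established above.
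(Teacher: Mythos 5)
The paper records this lemma without proof (it is introduced with ``the following lemma is straightforward''), so there is nothing to compare your argument against line by line; the question is only whether your proof is correct. For the hermitian assertions in both (1) and (2), and for the ultrahermitian assertion in (2), it is: the reduction to the minimal multiplier unitizations is legitimate (hermiticity and ultrahermiticity of elements of an approximately unital algebra are by definition detected in $\widetilde{A}$, and $\widetilde{\varphi}$ inherits contractivity, respectively isometry, exactly as you compute from the formula in \cref{pgr:unitization}); the identity $\exp(itf)=\widetilde{\varphi}(\exp(ite))$ together with the invertibility trick $1=\|\exp(itf)\exp(-itf)\|\leq\|\exp(itf)\|\,\|\exp(-itf)\|$ gives the hermitian statements; and pulling the defining inequality of \cref{pgr:Ultrahermitian} back along an isometric $\widetilde{\varphi}$ gives the reflecting ultrahermitian statement. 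These are surely the intended ``straightforward'' arguments.

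Your worry about the ultrahermitian case of (1) is not a defect of your proof but of the statement: that sub-case is false as written, and your parenthetical counterexample is correct. Taking $B$ to be Spain's algebra containing a hermitian, non-ultrahermitian idempotent $f$, the unital homomorphism $\CC\oplus_\infty\CC\to B$ given by $(x,y)\mapsto xf+y(1-f)$ is contractive precisely because $f$ is hermitian (the bicircular characterization preceding \cref{pgr:Ultrahermitian}), and is in fact isometric since the spectrum of $xf+y(1-f)$ contains $\{x,y\}$; yet it sends the ultrahermitian idempotent $(1,0)$ to $f$. So even isometry does not rescue the forward ultrahermitian implication --- only surjectivity, or extra structure such as \cref{lma:HerIdemLpUltra}, can. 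Fortunately this sub-case is never invoked in the paper: \cref{prp:PIMP-InvSgp-LPOA} and \cref{prp:PIMP-SymFP} use part (1) only for hermitian idempotents and part (2) for ultrahermitian ones. Your proposed repair --- drop the ultrahermitian clause from (1), or establish it only in the $L^p$-representable setting where hermitian idempotents are automatically ultrahermitian --- is exactly the right one, and the lemma ought to be restated accordingly.
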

%

\begin{prp}
\label{prp:PIMP-InvSgp-LPOA}
Let $p \in [1,\infty) \setminus \{2\}$, and let $A$ be an approximately unital \lpoa.
If $p=1$, additionally assume that $A$ admits a nondegenerate, isometric representation on some $L^1$-space (this is automatic if $A$ is unital).

Then hermitian idempotents in $A$ are ultrahermitian and commute.
Further, $\PIMP(A)$ is an inverse semigroup under multiplication.
In particular, the product of hermitian idempotents is again a hermitian idempotent.
\end{prp}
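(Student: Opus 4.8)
The plan is to transport everything through a \emph{unital} isometric representation on an $L^p$-space, using \cref{lma:HerIdemLpUltra} and \cref{prp:MapHermIdem} to move hermitian and ultrahermitian idempotents back and forth, and then to invoke \cref{prp:PIMP-InvSgp}.

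\textbf{Reduction to the unital case.} Since hermitian and ultrahermitian idempotents of $A$ are by definition detected in the minimal multiplier unitization $\widetilde{A}$ (see \cref{pgr:Ultrahermitian}), it suffices to show that the hermitian idempotents of $\widetilde{A}$ are ultrahermitian and pairwise commute. By \cref{pgr:LPOA}, the hypotheses ensure that $\widetilde{A}$ is a unital \lpoa{} admitting a unital, isometric representation $\varphi \colon \widetilde{A} \to \Bdd(E)$ on some $L^p$-space $E$: for $p \neq 1$ this uses \cite[Theorem~4.3]{GarThi20ExtendingRepr}, and for $p = 1$ it follows from the extra hypothesis together with \cite[Lemma~2.24]{BlePhi19LpOpAlgApproxId1}. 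I regard $\varphi$ as a unital isometric isomorphism onto the closed subalgebra $\varphi(\widetilde{A}) \subseteq \Bdd(E)$.

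\textbf{Ultrahermitian property.} Let $e \in \widetilde{A}$ be a hermitian idempotent. Applying \cref{prp:MapHermIdem}(1) to $\varphi$, which is unital (hence approximately unital) and contractive, shows that $\varphi(e)$ is a hermitian idempotent in $\Bdd(E)$. Since $E$ is an $L^p$-space, \cref{lma:HerIdemLpUltra} gives that $\varphi(e)$ is ultrahermitian in $\Bdd(E)$. Because $\varphi$ is isometric, \cref{prp:MapHermIdem}(2) then pulls this back: $e$ is ultrahermitian in $\widetilde{A}$. In particular, every hermitian idempotent of $A$ is ultrahermitian.

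\textbf{Commutativity (the main obstacle).} This is the only step where $p \neq 2$ is genuinely used. Fix a coordinatization $L^p(X,\mu) \cong E$ as in \cite[Proposition~2.6]{ChoGarThi24LpRigidity}. For $p \neq 2$, Lamperti's description of the isometries of $L^p$-spaces forces every hermitian idempotent of $\Bdd(E)$ to be a multiplication operator by an indicator function in this fixed coordinatization (see \cite[Proposition~2.7]{ChoGarThi24LpRigidity}); this is exactly what fails at $p = 2$, where non-commuting self-adjoint projections occur. Hence, for any two hermitian idempotents $e, f \in \widetilde{A}$, the operators $\varphi(e)$ and $\varphi(f)$ are multiplication operators on the same measure space and therefore commute, and injectivity of $\varphi$ gives $ef = fe$. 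The crux is precisely producing a \emph{single} coordinatization in which all hermitian idempotents are simultaneously diagonal; the Lamperti-type structure theory supplies this for $p \neq 2$ and is the place where the argument would break for $p = 2$.

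\textbf{Conclusion.} Having shown that hermitian idempotents in $A$ are ultrahermitian and commute, \cref{prp:PIMP-InvSgp} yields at once that $\PIMP(A)$ is an inverse semigroup under multiplication. For the final assertion I would identify the idempotents of this inverse semigroup with the hermitian idempotents of $A$: a hermitian idempotent $e$ is contractive (take $\alpha = 0$, $\beta = 1$ in the bicircularity estimate of \cref{pgr:Ultrahermitian}) and satisfies $e^\dag = e$, so it is an idempotent of $\PIMP(A)$; conversely, an idempotent $g \in \PIMP(A)$ is its own generalized inverse by uniqueness in an inverse semigroup, whence $g = g g^\dag$ is hermitian. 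Since the idempotents of an inverse semigroup commute (\cref{rmk:CharInvSmgp}) and a product of two commuting idempotents is again idempotent, the product of two hermitian idempotents is again a hermitian idempotent.
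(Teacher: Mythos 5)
Your proposal is correct and follows essentially the same route as the paper: pass to the minimal multiplier unitization, represent it unitally and isometrically on an $L^p$-space, use \cref{lma:HerIdemLpUltra} together with \cref{prp:MapHermIdem} to transfer the ultrahermitian property back, invoke the Lamperti-type fact that hermitian idempotents on an $L^p$-space ($p \neq 2$) are simultaneously realized as multiplication operators and hence commute, and conclude via \cref{prp:PIMP-InvSgp}. Your explicit identification of the idempotents of $\PIMP(A)$ with the hermitian idempotents of $A$, used to justify the final assertion, is a correct elaboration of a point the paper leaves implicit.
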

\begin{proof}
As explained in \cref{pgr:LPOA}, the assumptions imply that $\widetilde{A}$ admits a unital, isometric representation $\varphi \colon \widetilde{A} \to \Bdd(E)$ on some $L^p$-space $E$.
Let $e \in A$ be a hermitian idempotent.
Then $\varphi(e)$ is a hermitian idempotent, and since hermitian idempotents on $L^p$-spaces are ultrahermitian by \cref{lma:HerIdemLpUltra}, we first get that $\varphi(e)$ is ultrahermitian, and hence so is $e$ by part~(2) of~\cref{prp:MapHermIdem}.

Further, it is well-known that hermitian operators on $E$ are multiplication operators (using that $p \neq 2$) and therefore commute;
see, for example, \cite[Corollary~2.8]{ChoGarThi24LpRigidity}.
It follows that hermitian elements in $A$ commute as well.
Now \cref{prp:PIMP-InvSgp} shows that $\PIMP(A)$ is an inverse semigroup.
\end{proof}

We now address the question of norm continuity of the Moore-Penrose inverse, and at the same time establish Rako\v{c}evi{\'c}'s conjecture from \cite{Rak88MPInvBAlg} for certain \ba{s} including all \lpoa{s}.

\begin{con}[Rako\v{c}evi{\'c}'s conjecture]
\label{cnj}
\label{con:RakocvevicsConjecture}
Let $A$ be a unital \ba, and let $(a_n)_{n\in\NN}$ be a sequence in $A_\subMP$ that converges to $a\in A_\subMP$.
Then $a_n^\dag \to a^\dag$ if and only if $\sup_{n\in\NN} \| a_n^\dag \| < \infty$.
\end{con}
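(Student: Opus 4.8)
The plan is to separate the two implications and then reduce the nontrivial one entirely to norm convergence of the range and source idempotents. Write $b_n := a_n^\dag$, $b := a^\dag$, and $E_n := a_n - a$, so that $E_n \to 0$ and, by hypothesis, $\|b_n\| \leq M$ for some $M < \infty$. The implication ``$a_n^\dag \to a^\dag$ forces $\sup_n \|a_n^\dag\| < \infty$'' is immediate, as convergent sequences are bounded. For the converse, put $p := aa^\dag$, $q := a^\dag a$, $p_n := a_na_n^\dag$ and $q_n := a_n^\dag a_n$; these are hermitian idempotents, hence contractions, and $1-p$, $1-q$, $\dots$ are again hermitian idempotents (using the bicircularity recalled before \cref{pgr:Ultrahermitian}). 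First I would record the reduction: from $ab_n = p_n - E_nb_n$ one gets the exact identity $qb_n = bp_n - bE_nb_n$, and since $b_n = q_nb_n$ this yields
\[
b_n = qb_n + (q_n - q)b_n = bp_n - bE_nb_n + (q_n - q)b_n .
\]
Because $bp_n \to bp = b$ once $p_n \to p$, while the remaining summands are $O(\|E_n\|)$ and $O(\|q_n - q\|)$, we obtain $b_n \to b$ as soon as $p_n \to p$ and $q_n \to q$. The whole statement is thereby reduced to the norm convergence of these idempotents.

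Next I would extract, from the boundedness $\|b_n\| \le M$ alone, one-sided control of the idempotents. Expanding the Moore--Penrose relations gives the exact identities
\[
qq_n = q + bE_n(1-q_n), \qquad q_nq = q_n - b_nE_n(1-q),
\]
\[
pp_n = p_n - (1-p)E_nb_n, \qquad p_np = p + (1-p_n)E_nb .
\]
Since $E_n \to 0$, since $\|b\|$ is fixed and $\|b_n\| \le M$, and since $1-q_n$, $1-p$, and so forth are contractions, every error term tends to $0$. Hence, unconditionally,
\[
q(1-q_n) \to 0, \quad q_n(1-q) \to 0, \quad (1-p)p_n \to 0, \quad (1-p_n)p \to 0 .
\]
Equivalently, for the pairs $(e,f) = (q,q_n)$ and $(e,f) = (1-p,\,1-p_n)$ of hermitian idempotents we now have $e(1-f) \to 0$ and $f(1-e) \to 0$.

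It remains to upgrade this to $\|e-f\| \to 0$, and this is the step I expect to be the main obstacle. Writing $e-f = e(1-f) - (1-e)f$, the first corner is controlled by the previous paragraph, so everything hinges on the \emph{transposed} corner $(1-e)f$, which the identities above do not reach; in a general unital \ba{} there is no mechanism relating $e(1-f)$ to $(1-e)f$. I would therefore isolate the decisive key lemma: \emph{if $e,f$ are hermitian idempotents in a unital \ba{} with $e(1-f)\to 0$ and $f(1-e)\to 0$, then $e-f\to 0$}, and attempt to prove it through the numerical-range structure of hermitian idempotents, passing to the closed subalgebra generated by $e$ and $f$ and seeking a quantitative bound $\|e-f\| \le C\max\{\|e(1-f)\|, \|f(1-e)\|\}$ via the bicircularity and ultrahermitianity of \cref{pgr:Ultrahermitian} (in the spirit of \cite{Spa17NumRangeSimpleCompression}). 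Two structural cases make the bridge transparent, and I would treat them first: if $A$ is a \ca{} the involution gives $\|(1-e)f\| = \|f(1-e)\| \to 0$ directly, while if the hermitian idempotents of $A$ commute the two corners literally coincide, $(1-e)f = f(1-e) \to 0$; in either case $e-f \to 0$. Granting the key lemma, applying it to $(q,q_n)$ and to $(1-p,1-p_n)$ gives $q_n \to q$ and $p_n \to p$, and the reduction of the first paragraph finishes the proof. In particular this already settles Rako\v{c}evi{\'c}'s conjecture for all \lpoa{s} with $p \neq 2$, where the commutativity of hermitian idempotents is supplied by \cref{prp:PIMP-InvSgp-LPOA}.
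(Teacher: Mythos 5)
First, a point of framing: the statement you were asked to prove is stated in the paper as a \emph{conjecture}, and the paper itself proves it only under extra hypotheses --- \cref{prp:ContinuityMP} establishes it for (approximately) unital \ba{s} whose hermitian idempotents are ultrahermitian and commute, and a separate theorem covers unital \lpoa{s}, the case $p=2$ by embedding isometrically into $\Bdd(H)$ and invoking Rako\v{c}evi{\'c}'s C*-algebra result. Your proposal ends up in exactly the same position: your ``key lemma'' --- that $e(1-f)\to 0$ and $f(1-e)\to 0$ force $e-f\to 0$ for hermitian idempotents in a general unital \ba{} --- is precisely where the general conjecture remains open along this route, since nothing in a general \ba{} relates the corner $e(1-f)$ to its transpose $(1-e)f$. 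You correctly flag this as the obstacle rather than claiming it, so there is no false step, but also no proof of the full conjecture; none exists in the paper either.

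In the cases you do settle, your argument is correct and takes a genuinely different route from the paper's proof of \cref{prp:ContinuityMP}. I checked all four perturbation identities (e.g.\ $qq_n - q = bE_n(1-q_n)$ follows from $a_nq_n=a_n$ together with $bE_n = a^\dag a_n - q$, and $q_nq - q_n = -b_nE_n(1-q)$ from $aq=a$; the two $p$-identities are analogous) as well as the reduction $b_n = bp_n - bE_nb_n + (q_n-q)b_n$, which uses $bp = a^\dag aa^\dag = b$. The paper instead proves $(2)\Rightarrow(3)\Rightarrow(1)$: it assembles the hermitian idempotents into a closed subalgebra, identifies it with $C(X)$ via the Vidav--Palmer theorem, uses that projections at distance $<1$ are equal to conclude $a_n^\dag a_n = a^\dag a$ and $a_na_n^\dag = aa^\dag$ for all large $n$ (eventual \emph{equality}, not just convergence), and finishes with an invertibility argument for $a^\dag a_n$ in the corner algebra $eAe$. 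That route needs the hermitian idempotents to be closed under multiplication, which is exactly where ultrahermitianity enters (see the remark following \cref{prp:ContinuityMP}, which notes it is unclear whether products of commuting hermitian idempotents are hermitian). Your route, by contrast, uses only that hermitian idempotents are contractive (automatic from $\|1-2e\|=1$) and commute: commutation transposes the uncontrolled corner, giving honest norm convergence $q_n\to q$ and $p_n\to p$. So in the commuting case your proof is more elementary (no Vidav--Palmer, no corner inversion) and in fact holds under a formally weaker hypothesis than the paper's theorem --- a genuine improvement worth noting. Two caveats: your reduction uses the bound $\|b_n\|\le M$ throughout, so it does not recover the paper's implication $(3)\Rightarrow(1)$ without boundedness; and for $p=2$ your involution argument covers genuine \ca{s} but not arbitrary unital $L^2$-operator algebras, which still require the embedding step --- your final claim is correctly restricted to $p\neq 2$, where commutation is supplied by \cref{prp:PIMP-InvSgp-LPOA}.
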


\begin{thm} 
\label{prp:ContinuityMP}
Let $A$ be an approximately unital \ba{} whose hermitian idempotents are ultrahermitian and commute.
Let $(a_n)_{n\in\NN}$ be a sequence in $A_\subMP$ that converges to $a\in A_\subMP$.
Then the following are equivalent:
\begin{enumerate}
\item
We have $a_n^\dag \to a^\dag$.
\item
We have $\sup_{n\in\NN} \| a_n^\dag \| < \infty$.
\item
We have $a_n^\dag a_n \to a^\dag a$ and $a_na_n^\dag \to aa^\dag$.
\end{enumerate}
\end{thm}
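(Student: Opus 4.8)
The plan is to establish the cycle $(1)\Rightarrow(2)\Rightarrow(3)\Rightarrow(1)$. The implication $(1)\Rightarrow(2)$ is immediate, since a convergent sequence is bounded. Throughout I would work in the unitization $\widetilde{A}$ and abbreviate the four hermitian idempotents $p:=aa^\dag$, $q:=a^\dag a$, $p_n:=a_na_n^\dag$, and $q_n:=a_n^\dag a_n$, recording the defining relations $pa=a=aq$ and $a^\dag p=a^\dag=qa^\dag$ together with their counterparts for $a_n$; in particular $(1-p)a=0$, $a(1-q)=0$, $(1-p_n)a_n=0$, and $a_n(1-q_n)=0$. The only structural input I expect to use is that $p,p_n$ (resp.\ $q,q_n$) commute, which holds since they are hermitian idempotents.

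For $(2)\Rightarrow(3)$, assume $\|a_n^\dag\|\le M$ for all $n$; this also bounds $\|p_n\|,\|q_n\|$, and hence $\|1-p_n\|,\|1-q_n\|$, uniformly. Using commutativity of $p$ and $p_n$ I would split
\[
p-p_n = p(1-p_n) - (1-p)p_n
\]
and estimate each summand. From $(1-p)a=0$ I rewrite $(1-p)p_n=\big[(1-p)(a_n-a)\big]a_n^\dag$, which tends to $0$ because $a_n\to a$ and $\|a_n^\dag\|\le M$; from commutativity together with $(1-p_n)a_n=0$ I rewrite $p(1-p_n)=(1-p_n)p=\big[(1-p_n)(a-a_n)\big]a^\dag$, which tends to $0$ as well. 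Hence $p_n\to p$, and the mirror-image computation (interchanging left and right multiplication, using $a(1-q)=0$ and $a_n(1-q_n)=0$) gives $q_n\to q$.

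The core of the argument is $(3)\Rightarrow(1)$, and here I would introduce the auxiliary sequence
\[
z_n := q_n\,a^\dag\,p_n ,
\]
which converges to $qa^\dag p=a^\dag$ by $(3)$. A short computation using $a_nq_n=a_n$ shows that the defect $\delta_n:=a_n(a_n^\dag-z_n)$ equals $(1-a_na^\dag)p_n$, and since $a_na^\dag\to p$ and $p_n\to p$ this forces $\delta_n\to(1-p)p=0$. The key identity, obtained from $q_na_n^\dag=a_n^\dag$ and $q_nz_n=z_n$, is
\[
a_n^\dag - z_n = q_n(a_n^\dag-z_n) = a_n^\dag a_n(a_n^\dag-z_n) = a_n^\dag\,\delta_n .
\]

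The hard part, and what I regard as the main obstacle, is that under $(3)$ there is \emph{a priori} no control on $\|a_n^\dag\|$, so one cannot simply pass to the limit in the identity above. The decisive point is that this identity is self-improving: from $\|a_n^\dag-z_n\|\le\|a_n^\dag\|\,\|\delta_n\|$ and $\|\delta_n\|\to0$, once $\|\delta_n\|\le\tfrac12$ one obtains $\|a_n^\dag\|\le\|z_n\|+\tfrac12\|a_n^\dag\|$, that is $\|a_n^\dag\|\le 2\|z_n\|$; as $(z_n)$ converges, this yields $\sup_n\|a_n^\dag\|<\infty$. Feeding this boundedness back into $a_n^\dag-z_n=a_n^\dag\delta_n$ shows $a_n^\dag-z_n\to0$, and therefore $a_n^\dag\to a^\dag$, completing the cycle.
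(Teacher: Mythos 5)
Your proof is correct, and it reaches the conclusion by a genuinely different route from the paper. The paper's proof runs through the Vidav--Palmer theorem: the closed subalgebra generated by the hermitian idempotents is identified with a commutative $\mathrm{C}^*$-algebra $C(X)$, and the key mechanism is that two projections in $C(X)$ at distance less than $1$ coincide. This yields the stronger intermediate statement that $a_n^\dag a_n = a^\dag a$ and $a_n a_n^\dag = a a^\dag$ for all large $n$ (eventual \emph{equality}, not mere convergence), after which $(3)\Rightarrow(1)$ is reduced to continuity of inversion in the corner algebra $eAe$ with $e=a^\dag a$, via the observation that $a^\dag a_n$ is invertible there with inverse $a_n^\dag a$. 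Your argument replaces all of this with direct norm estimates: for $(2)\Rightarrow(3)$ the splitting $p-p_n=p(1-p_n)-(1-p)p_n$ combined with the annihilation relations and commutativity of the hermitian idempotents involved; for $(3)\Rightarrow(1)$ the auxiliary sequence $z_n=q_n a^\dag p_n$ together with the self-improving identity $a_n^\dag - z_n = a_n^\dag\delta_n$, which first bootstraps the bound $\|a_n^\dag\|\le 2\|z_n\|$ and then gives the convergence. What your route buys is economy of hypotheses and machinery: it needs only that the finitely many hermitian idempotents $p,p_n,q,q_n$ commute (and the step $(3)\Rightarrow(1)$ uses no hypothesis on hermitian idempotents whatsoever), with no appeal to Vidav--Palmer, to the $\mathrm{C}^*$-identity, or to rigidity of projections; this is notable given the paper's own remark that its proof relies on the hermitian idempotents being closed under multiplication. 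What the paper's route buys is the extra structural information that the range and source idempotents stabilize along the sequence, which is what makes its corner-algebra reduction so short.
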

\begin{proof}
By passing to the minimal multiplier unitization, we may assume that $A$ is unital.
Let $E$ denote the set of hermitian idempotents in $A$, and let $D$ denote the closed subalgebra of $A$ generated by $E$.
Since the product of commuting ultrahermitian idempotents is (ultra)hermitian, if follows that $E$ is closed under multiplication. 
Using also that $A_\subHerm + i A_\subHerm$ is a closed subspace, we have
\[
D
= \overline{\linSpan(E)}^{\|\cdot\|} \subseteq A_\subHerm + i A_\subHerm.
\]
We have $D_\subHerm = D \cap A_\subHerm$, and it follows that $D$ is a unital \ba{} satisfying $D=D_\subHerm + i D_\subHerm$.
By the Vidav-Palmer theorem, $D$ is a unital, commutative \ca{}.
We let $X$ denote its spectrum, so that $D \cong C(X)$.

We will use that hermitian idempotents in $A$ (equivalently, in~$D$) are nothing but projections in $C(X)$.
We further use the standard fact that if $e$ and $f$ are two projections in a commutative \ca{} such that $\| e -f \| < 1$, then $e = f$.

\medskip

It is clear that~(1) implies~(2).
Supposing that~(2) holds, let us verify~(3).
Set
\[
N := \max \Big\lbrace  \| a^\dag \| , \sup_{n\in\NN} \| a_{n}^{\dag} \| \Big\rbrace.
\]
We introduce auxiliary maps $r,s \colon A_\subMP \to A$ given by 
\[
r(b) = bb^\dag, \andSep
s(b) = b^\dag b
\]
for $b \in A_\subMP$.
Note that~$r(b)$ and~$s(b)$ are hermitian idempotents and $b = r(b) b = b s(b)$ for every $b \in A_\subMP$.
We need to show that $r(a_n) \to r(a)$ and $s(a_n) \to s(a)$.
In fact, we will show that $r(a_n) = r(a)$ and $s(a_n) = s(a)$ for all $n$ large enough.
If $n$ satisfies $\| a_{n} - a \| < \tfrac{1}{2N}$, then 
\[
\big\| a_{n} - a_{n} s(a) \big\| 
= \big\| a_{n} - a + as(a) - a_{n} s(a) \big\| 
\leq \big\| a_{n} - a \big\| + \big\| a s(a) - a_{n} s(a) \big\| 
< \frac{1}{N},
\]
and hence 
\[
\big\| s(a_{n}) - s(a_{n}) s(a) \big\| 
= \big\| a_n^\dag a_{n} - a_n^\dag a_n s(a) \big\| 
\leq N \big\| a_{n} - a_{n} s(a) \big\| < 1.
\]
Since $s(a_{n})$ and $s(a_{n}) s(a)$ are projections in $C(X)$, we get $s(a_{n}) = s(a_{n}) s(a)$.
Reversing the roles of $a$ and $a_{n}$, we obtain also that $s(a) = s(a)s(a_n)=s(a_n)s(a)$, hence $s(a_{n}) = s(a)$.
Analogously, one shows that $r(a_{n}) = r(a)$.

\medskip

Assuming that~(3) holds, let us verify~(1).
By assumption, $(a_{n}^{\dag} a_n)_{n\in\NN}$ is a sequence of projections in $C(X)$ converging to the projection~$a^\dag a$.
This implies that $a_{n}^{\dag} a_n = a^\dag a$ for large enough $n$, and by discarding finitely many terms, we may assume that $a_{n}^{\dag} a_n = a^\dag a$ for all $n\in\NN$.
Analogously, we may assume that $a_n a_{n}^{\dag} = a a^\dag$ for all $n\in\NN$.

Set $e := a^\dag a$, and consider the Banach algebra $eAe \subseteq A$, which has $e$ as a unit.
Given $n\in\NN$, we have 
\[
a^{\dag} a_n 
= a^{\dag} a a^{\dag} a_n a_n^{\dag} a_n
= e a^{\dag} a_n e, \andSep
a_n^{\dag} a
= a_n^{\dag} a_n a_n^{\dag} a a^{\dag} a
= e a_n^{\dag} a e.
\]
Thus, $a^{\dag} a_n$ and $a_n^\dag a$ belong to $eAe$, and we have
\begin{align*}
\big( a^{\dag} a_n \big) \big( a_{n}^{\dag} a \big)
&= a^{\dag} \big( a_n a_{n}^{\dag} \big) a 
= a^{\dag} \big( a a^{\dag} \big) a 
= e, \andSep \\
\big( a_{n}^{\dag} a \big) \big( a^{\dag} a_n \big) 
&= a_n^{\dag} \big( a a^{\dag} \big) a_n 
= a_n^{\dag} \big( a_n a_n^{\dag} \big) a_n 
= e, 
\end{align*}
Thus, $a^{\dag} a_n$ is invertible in $eAe$ with inverse $a_{n}^{\dag} a$, for each~$n$.
Since left and right multiplication by a fixed element in $A$ are continuous, and since inversion is continuous in $\mathrm{GL}(eAe)$, we obtain the following chain of implications:
\begin{align*}
a_n \xrightarrow{\|\cdot\|} a 
&\ \implies \ a^\dag a_n \xrightarrow{\|\cdot\|} a^\dag a = e 
\ \implies \ a_{n}^{\dag} a \xrightarrow{\|\cdot\|} e \\
&\ \implies \ a_{n}^{\dag} = a_{n}^{\dag} a_{n} a_{n}^{\dag} = a_{n}^{\dag} a a^\dag \xrightarrow{\|\cdot\|} e a^\dag = a^\dag.
\end{align*}
This verifies~(1), as desired.
\end{proof}

\begin{rmk}
We note that the proof of \cref{prp:ContinuityMP} only uses that the family of hermitian idempotents is closed under multiplication.
This holds under the assumption of the theorem, since the product of two commuting ultrahermitian idempotents is ultrahermitian.
On the other hand, it is not clear if the product of two commuting hermitian idempotents is hermitian.
\end{rmk}

\begin{thm} 
Rako\v{c}evi{\'c}'s conjecture holds for unital \lpoa{s}, for $p \in [1,\infty)$.
\end{thm}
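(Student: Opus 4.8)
The plan is to read the statement off as the equivalence of conditions~(1) and~(2) in \cref{prp:ContinuityMP} and then to verify, case by case in $p$, that the hypotheses of that theorem are met. Indeed, for a sequence $(a_n)_{n\in\NN}$ in $A_\subMP$ converging to $a \in A_\subMP$, Rako\v{c}evi{\'c}'s conjecture (\cref{con:RakocvevicsConjecture}) is exactly the assertion that $a_n^\dag \to a^\dag$ holds if and only if $\sup_n \|a_n^\dag\| < \infty$, that is, the implication ``(2)$\Rightarrow$(1)'' of \cref{prp:ContinuityMP} together with the trivial reverse. So it suffices to place $A$ under the standing hypotheses of that theorem.

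For $p \in [1,\infty)\setminus\{2\}$ this is immediate: since $A$ is unital, the extra nondegeneracy assumption required at $p=1$ is automatic, so \cref{prp:PIMP-InvSgp-LPOA} applies and shows that hermitian idempotents in $A$ are ultrahermitian and commute. These are precisely the hypotheses of \cref{prp:ContinuityMP}, and the conjecture follows.

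The case $p=2$ falls outside this scheme, since in $\Bdd(H)$ the hermitian idempotents are the orthogonal projections, which need not commute (nor even be closed under multiplication), so neither \cref{prp:ContinuityMP} nor the weaker variant recorded in the remark after it is available. Here I would instead reduce to the \ca{} $\Bdd(H)$. Fixing a unital isometric representation $\varphi\colon A \to \Bdd(H)$, the key point is that $\varphi$ transports Moore-Penrose data: being an isometric unital homomorphism it carries hermitian elements to self-adjoint ones, so if $b=a^\dag$ in $A$ then $\varphi(b)$ satisfies the defining Moore-Penrose equations for $\varphi(a)$ in $\Bdd(H)$; by uniqueness this gives $\varphi(a)^\dag = \varphi(a^\dag)$, and hence $\|\varphi(a_n)^\dag\| = \|a_n^\dag\|$. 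A convergent sequence with uniformly bounded inverses in $A$ therefore maps to one in $\Bdd(H)$, and invoking the known validity of Rako\v{c}evi{\'c}'s conjecture for \ca{s} (the classical continuity of the Moore-Penrose inverse of Hilbert-space operators) yields $\varphi(a_n^\dag)\to\varphi(a^\dag)$, which pulls back isometrically to $a_n^\dag \to a^\dag$ in $A$.

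The main obstacle I anticipate is concentrated entirely in the $p=2$ case: one must know that the Moore-Penrose inverse computed inside the subalgebra $A$ agrees with the one computed in $\Bdd(H)$, so that the reduction is faithful. This rests on uniqueness of Moore-Penrose inverses together with the fact that the hermitian elements of $A$ are exactly the self-adjoint elements of $\Bdd(H)$ lying in $A$; and it requires a citable form of the conjecture for \ca{s} to appeal to, the $p\neq 2$ machinery of this section being of no use there.
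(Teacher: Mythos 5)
Your proposal is correct and follows essentially the same route as the paper: for $p\neq 2$ it invokes \cref{prp:PIMP-InvSgp-LPOA} to verify the hypotheses of \cref{prp:ContinuityMP}, and for $p=2$ it transports Moore--Penrose inverses through a unital isometric representation into $\Bdd(H)$ (using uniqueness of the Moore--Penrose inverse and preservation of hermitian elements) and then appeals to the known $\rm C^*$-algebra case of the conjecture, exactly as in the paper, which cites Rako\v{c}evi{\'c}'s own verification for \ca{s}.
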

\begin{proof}
For $p \in [1,\infty) \setminus \{2\}$, this follows from \cref{prp:ContinuityMP} since hermitian idempotents in unital \lpoa{s} commute and are closed under multiplication by \cref{prp:PIMP-InvSgp-LPOA}.

For $p=2$, let $A$ be a unital $L^2$-algebra and let $\varphi \colon A \to \Bdd(H)$ be a unital, isometric representation on some Hilbert space~$H$. 
Since $\varphi$ preserves hermitian idempotents, it follows that $\varphi(A_\subMP) \subseteq \Bdd(H)_\subMP$ and $\varphi(a^\dag)=\varphi(a)^\dag$ for all $a \in A_\subMP$.

Let $(a_n)_{n\in\NN}$ be a sequence in $A_\subMP$ converging to $a \in A_\subMP$.
If $a_{n}^{\dag} \to a^{\dag}$, then clearly $\sup_{n\in\NN} \| a_{n}^{\dag} \| < \infty$.
Conversely, suppose that $\sup_{n\in\NN} \| a_{n}^{\dag} \| < \infty$.
In \cite[Theorem~2.2]{Rak93ContinuityMPInvCAlg}, Rako\v{c}evi{\'c} confirmed his conjecture for \ca{s}.
Using that~$\varphi$ is an isometric representation and that $\Bdd(H)$ is a \ca{}, it follows that
\[
\varphi(a_{n}^{\dag}) 
= \varphi(a_{n})^\dag 
\to \varphi(a)^\dag 
= \varphi(a^\dag).
\]
Using again that $\varphi$ is isometric, we get $a_{n}^{\dag} \to a^\dag$. 
\end{proof}

We say that two MP-partial isometries $a_0$ and $a_1$ in an approximately unital \ba{} $A$ are \emph{homotopic}, denoted $a_0 \simeq a_1$, provided that there is a norm-continuous path $a\colon [0,1] \to \PIMP(A)$ with $a(0)=a_0$ and $a(1)=a_1$. 

\begin{lma} 
\label{prp:HtpyMultDagger}
Let $A$ be an approximately unital \ba{} in which hermitian idempotents are ultrahermitian and commute.
Then the following statements hold:
\begin{enumerate}
\item
If $a_0 \simeq a_1$ and $b_0 \simeq b_1$ in $\PIMP(A)$, then $a_0b_0 \simeq a_1b_1$.
\item
If $a_0 \simeq a_1$ in $\PIMP(A)$, then $a_0^\dag \simeq a_1^\dag$.
\end{enumerate}
\end{lma}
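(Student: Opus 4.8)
The plan is to prove both statements by exhibiting the obvious pointwise candidate paths and checking that they are norm-continuous paths with values in $\PIMP(A)$. The essential structural input is \cref{prp:PIMP-InvSgp}: under the standing hypotheses, $\PIMP(A)$ is an inverse semigroup under multiplication, so it is closed under multiplication and under Moore-Penrose inversion. (The unique generalized inverse of $a \in \PIMP(A)$ inside this inverse semigroup is precisely $a^\dag$, since $a^\dag \in \PIMP(A)$ and it satisfies the generalized-inverse equations.) Thus the only thing to verify in each case is continuity of the relevant pointwise operation.

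For~(1), I would fix norm-continuous paths $a, b \colon [0,1] \to \PIMP(A)$ witnessing $a_0 \simeq a_1$ and $b_0 \simeq b_1$, and consider the pointwise product $t \mapsto a(t)b(t)$. This takes values in $\PIMP(A)$ because $\PIMP(A)$ is closed under multiplication. Norm-continuity is immediate from the estimate $\| a(t)b(t) - a(s)b(s) \| \leq \| a(t) \|\,\| b(t)-b(s) \| + \| a(t)-a(s) \|\,\| b(s) \| \leq \| b(t)-b(s) \| + \| a(t)-a(s) \|$, using that all elements involved are contractive. Evaluating at the endpoints gives $a_0 b_0$ and $a_1 b_1$, so $a_0 b_0 \simeq a_1 b_1$.

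For~(2), I would fix a norm-continuous path $a \colon [0,1] \to \PIMP(A)$ from $a_0$ to $a_1$ and consider the pointwise Moore-Penrose inverse $t \mapsto a(t)^\dag$, which again lands in $\PIMP(A)$ since the inverse semigroup $\PIMP(A)$ is closed under Moore-Penrose inversion. The only real point is that this path is norm-continuous, i.e.\ that the Moore-Penrose inverse varies continuously along $a$, and this is exactly where \cref{prp:ContinuityMP} enters. To check continuity at a point $t_0$, take any sequence $t_n \to t_0$; then $a(t_n) \to a(t_0)$ by continuity of $a$, and $\sup_{n} \| a(t_n)^\dag \| \leq 1$ because every element of $\PIMP(A)$ has contractive Moore-Penrose inverse. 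Hence condition~(2) of \cref{prp:ContinuityMP} is satisfied, and the theorem yields $a(t_n)^\dag \to a(t_0)^\dag$. As $[0,1]$ is metrizable, this sequential continuity gives continuity of $t \mapsto a(t)^\dag$, and evaluating at the endpoints yields $a_0^\dag \simeq a_1^\dag$.

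I expect statement~(1) to be routine, with essentially all the content of the lemma resting on the continuity of the Moore-Penrose inverse needed for~(2). The main obstacle is therefore already resolved upstream: the uniform bound $\| a(t)^\dag \| \leq 1$ built into the definition of $\PIMP(A)$ is precisely the hypothesis that makes \cref{prp:ContinuityMP} applicable, so no boundedness argument has to be reconstructed here.
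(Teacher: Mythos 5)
Your proposal is correct and follows essentially the same route as the paper: part~(1) is the pointwise product path, using that $\PIMP(A)$ is closed under multiplication (\cref{prp:PIMP-InvSgp}) together with joint continuity of multiplication on bounded sets, and part~(2) is the pointwise dagger path, whose continuity follows from \cref{prp:ContinuityMP} precisely because the uniform bound $\|a(t)^\dag\|\leq 1$ is built into the definition of $\PIMP(A)$. The only difference is that you spell out the bilinear estimate and the sequential-continuity reduction in slightly more detail than the paper does.
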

\begin{proof}
(1) 
If $t \mapsto a_t$ and $t \mapsto b_t$ are homotopies implementing $a_0 \simeq a_1$ and $b_0 \simeq b_1$, then by joint continuity of multiplication in $A$, and since $\PIMP(A)$ is a semigroup under multiplication by \cref{prp:PIMP-InvSgp}, we have that $t \mapsto a_t b_t$ is a continuous path in $\PIMP(A)$ from $a_0b_0$ to $a_1b_1$. 

(2)
Since elements in $\PIMP(A)$ are contractive, it follows from \cref{prp:ContinuityMP} that the dagger map $\dag$ is continuous on $\PIMP(A)$. 
Thus, if $t \mapsto a_t$ is a continuous path in $\PIMP(A)$ from $a_0$ to $a_1$, then $t \mapsto a_t^{\dag}$ is a continuous path from $a_0^\dag$ to $a_1^\dag$.
\end{proof}

\begin{dfn}
\label{dfn:SInv}
Let $A$ be an approximately unital \ba.
We denote the set of homotopy classes of its MP-partial isometries by
\[
\SInv(A) := \PIMP(A)/\simeq,
\]
and we write $[a]$ for the class of $a \in \PIMP(A)$ in $\SInv(A)$.
\end{dfn}

\begin{thm} 
\label{prp:InvSgpHtpyPIMP}
Let $A$ be an approximately unital \ba{} in which hermitian idempotents are ultrahermitian and commute.
Then $\SInv(A)$ is an inverse semigroup with multiplication given by $[a] [b] := [ab]$ for $a,b \in \PIMP(A)$. In this inverse semigroup, we have 
$[a]^\dag := [a^\dag]$ for all $a\in \PIMP(A)$. Moreover, 
the quotient map $\PIMP(A) \to \SInv(A)$ is a homomorphism of inverse semigroups.

This applies to approximately unital \lpoa{s} for $p \in (1,\infty) \setminus \{2\}$, as well as for approximately unital $L^1$-operator algebras that admit a nondegenerate, isometric representation on some $L^1$-space.
\end{thm}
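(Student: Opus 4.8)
The plan is to show that the homotopy relation $\simeq$ is a semigroup congruence on $\PIMP(A)$, and then to transport the inverse-semigroup structure of $\PIMP(A)$ to the quotient $\SInv(A)$. The substantive analytic work is already contained in \cref{prp:PIMP-InvSgp} (which makes $\PIMP(A)$ an inverse semigroup) and \cref{prp:HtpyMultDagger} (compatibility of $\simeq$ with multiplication and with the dagger), so what remains is largely semigroup bookkeeping, with one genuinely non-formal point.

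First I would check that $\simeq$ is an equivalence relation: reflexivity is witnessed by constant paths, symmetry by reversing a path, and transitivity by concatenating reparametrized paths, all of which remain in $\PIMP(A)$. Applying part~(1) of \cref{prp:HtpyMultDagger} with a constant homotopy in one variable then shows that $a_0 \simeq a_1$ implies $a_0 b \simeq a_1 b$ and $b a_0 \simeq b a_1$ for every $b \in \PIMP(A)$; since $\PIMP(A)$ is a semigroup under multiplication by \cref{prp:PIMP-InvSgp}, this makes $\simeq$ a congruence. Hence the rule $[a][b] := [ab]$ is well defined and associative, so $\SInv(A)$ is a semigroup and the quotient map $\PIMP(A) \to \SInv(A)$ is a semigroup homomorphism. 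Part~(2) of \cref{prp:HtpyMultDagger} shows that $[a]^\dag := [a^\dag]$ is well defined.

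To see that $\SInv(A)$ is an inverse semigroup I would invoke \cref{rmk:CharInvSmgp}, so that it suffices to exhibit generalized inverses and to verify that idempotents commute. Generalized inverses descend along the quotient map: from $a a^\dag a = a$ and $a^\dag a a^\dag = a^\dag$ we obtain $[a][a^\dag][a] = [a]$ and $[a^\dag][a][a^\dag] = [a^\dag]$, so $[a^\dag]$ is a generalized inverse of $[a]$. The commutation of idempotents is the one point requiring care, since an idempotent \emph{class} in $\SInv(A)$ need not be the class of an idempotent of $\PIMP(A)$. Here I would appeal to Lallement's lemma: as $\PIMP(A)$ is regular (indeed inverse, by \cref{prp:PIMP-InvSgp}) and $\SInv(A)$ is a homomorphic image, every idempotent of $\SInv(A)$ lifts to an idempotent of $\PIMP(A)$; since idempotents of the inverse semigroup $\PIMP(A)$ commute, so do their images, and hence all idempotents of $\SInv(A)$ commute. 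This also forces $[a^\dag]$ to be the \emph{unique} generalized inverse of $[a]$, so the quotient map is a homomorphism of inverse semigroups. Alternatively, one can simply cite the standard fact that the class of inverse semigroups is closed under homomorphic images.

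Finally, for the concluding assertion I would verify that the standing hypothesis---hermitian idempotents are ultrahermitian and commute---holds for the listed algebras. This is precisely \cref{prp:PIMP-InvSgp-LPOA}: for $p \in (1,\infty) \setminus \{2\}$ every approximately unital \lpoa{} qualifies, the required nondegenerate isometric representation being automatic as recalled in \cref{pgr:LPOA}, while for $p = 1$ one uses the additional hypothesis of a nondegenerate isometric representation on an $L^1$-space. In either case the general statement applies. I expect the idempotent-commutation step, via Lallement's lemma, to be the main (and essentially the only non-routine) obstacle, everything else being a direct assembly of \cref{prp:PIMP-InvSgp}, \cref{prp:HtpyMultDagger}, and \cref{rmk:CharInvSmgp}.
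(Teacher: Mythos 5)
Your proof is correct, and the overall skeleton (well-definedness via \cref{prp:HtpyMultDagger}, existence of generalized inverses descending from $\PIMP(A)$, verification of the hypotheses for the $L^p$ classes via \cref{prp:PIMP-InvSgp-LPOA}) matches the paper. Where you genuinely diverge is in the crucial step of establishing that $[a^\dag]$ is the \emph{unique} generalized inverse of $[a]$. The paper does this by hand: assuming $a \simeq aba$ and $b \simeq bab$, it runs an explicit chain of homotopies (using both parts of \cref{prp:HtpyMultDagger}, the identity $a^\dag = a^\dag a a^\dag$, and the commutation of hermitian idempotents) to conclude $b \simeq a^\dag$ directly. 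You instead observe that $\simeq$ is a congruence on the inverse semigroup $\PIMP(A)$, so that $\SInv(A)$ is a homomorphic image of an inverse semigroup, and then invoke Lallement's lemma to lift idempotents of the quotient to idempotents of $\PIMP(A)$, whence idempotents of $\SInv(A)$ commute and \cref{rmk:CharInvSmgp} applies. This is a legitimate and in fact cleaner route: it replaces the paper's somewhat delicate homotopy computation with a standard structural fact (the class of inverse semigroups is closed under quotients by congruences), and it makes transparent exactly what is being used, namely that $\simeq$ is a congruence compatible with $\dag$. What the paper's explicit argument buys in exchange is self-containedness --- it needs no semigroup-theoretic input beyond \cref{rmk:CharInvSmgp}, which is already cited elsewhere --- and it keeps the role of the commuting-hermitian-idempotents hypothesis visible in the computation. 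If you write your version up, you should state Lallement's lemma precisely (it is in Howie's book, which the paper already cites) and make explicit the short verification that $\simeq$ is an equivalence relation and a two-sided congruence, since the paper never isolates that fact as such.
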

\begin{proof}
It follows from \cref{prp:HtpyMultDagger} that the multiplication and the $\dag$-operation in~$\SInv(A)$ are well defined.
Given $[a] \in \SInv(A)$, using that $\PIMP(A)$ is an inverse semigroup by \cref{prp:PIMP-InvSgp} it follows that
\[
[a] 
= [a a^\dag a]
= [a] [a^\dag] [a], \andSep
[a^\dag] 
= [a^\dag a a^\dag]
= [a^\dag] [a] [a^\dag].
\]
We need to show that $[a^\dag]$ is the unique element in $\SInv(A)$ with this property.
Suppose that $b \in \PIMP(A)$ satisfies 
\begin{align*}\label{eq1}\tag{2.1}a \simeq a b a \andSep b \simeq bab; \end{align*}
we want to show that $a^\dag \simeq b$. 
Applying part~(2) of~\cref{prp:HtpyMultDagger} to $b \simeq bab$, we get 
\begin{align*}\label{eq2}\tag{2.2}
b^\dag \simeq b^\dag a^\dag b^\dag.
\end{align*}
Thus,
\begin{align*}\label{eq3}\tag{2.3}
a^\dag = a^\dag a a^\dag  \stackrel{\eqref{eq1}}{\simeq} (a^\dag a) b (a a^\dag) \andSep 
b^\dag = b^\dag b b^\dag  \stackrel{\eqref{eq1}}{\simeq} (b^\dag b) a (b b^\dag).
\end{align*}
Using part~(1) of~\cref{prp:HtpyMultDagger} repeatedly, using $a^\dag=a^\dag a a^\dag$ at the third step, and using that hermitian idempotents commute in the fifth step, we get
\begin{align*}
b =bb^\dag b &\stackrel{\eqref{eq2}}{\simeq} b (b^\dag a^\dag b^\dag) b \\
&\ = \ (b b^\dag) (a^\dag a a^\dag) (b^\dag b) \\
&\stackrel{\eqref{eq1}}{\simeq} 
(b b^\dag) (a^\dag aba a^\dag) (b^\dag b) \\
& \ = \ (a^\dag a) (b b^\dag) b (b^\dag b) (a a^\dag) \\
&\ = \ (a^\dag a) b (a a^\dag) \stackrel{\eqref{eq3}}{\simeq} a^\dag,
\end{align*}
as desired. We have seen that $\SInv(A)$ is an inverse semigroup, and that the quotient map $\PIMP(A) \to \SInv(A)$ is a homomorphism of inverse semigroups.

Finally, by \cref{prp:PIMP-InvSgp-LPOA} the assumptions are satisfied for the mentioned classes of \lpoa{s}.
\end{proof}

We point out that the related notions of \emph{$L^p$-projections} and \emph{$L^p$-partial isometries} have been studied by Bardadyn, Kwa{\'s}niewski and McKee
in 
Section~2 of~\cite{BarKwaMck23arX:BAlgTwGpdInvSgpDisintLp}.

\section{Partial isometries in groupoid \texorpdfstring{$L^p$}{Lp}-operator algebras} 
\label{sec:PI-FpGpd}

Given a Hausdorff, \etale{} groupoid $\calG$, we consider the associated reduced \lpoa{} $\Fp(\calG)$ (\cref{pgr:GpdLPOA}), which is an approximately unital \ba{} that admits a nondegenerate, isometric representation on an $L^p$-space.
For $p \neq 2$, we will show that the set $\PIMP(\Fp(\calG))$ of its MP-partial isometries is naturally an inverse semigroup (\cref{prp:PIMP-InvSgp-LPOA}), and the set $\SInv(\Fp(\calG))$ of path connected components in $\PIMP(\Fp(\calG))$ inherits the structure of an inverse semigroup (\cref{prp:InvSgpHtpyPIMP}).
These results follow from our computations of these inverse semigroups in terms of the groupoid~$\calG$.
Indeed, the main result of this section shows that the inverse semigroup $\SInv(\Fp(\calG))$ is naturally isomorphic to the inverse semigroup of compact, open bisections in $\calG$;
see \cref{prp:Bco_SFp}.

\medskip

We start by recalling basic definitions and results.
For details and more information, the reader is referred to \cite{Ren80GrpdApproach, Sim17arX:EtaleGpds, Wil19ToolKitGpdCAlgs} for the theory of groupoids, and to  \cite{GarLup17ReprGrpdLp} and \cite[Section~4]{ChoGarThi24LpRigidity} for the theory of groupoid \lpoa{s}.

\begin{pgr}[Groupoids]
\label{pgr:groupoids}
Let $\calG$ be a groupoid with unit space $\calG^{(0)}$ and let 
$\ran, \sou\colon \calG\to\calG^{(0)}$ denote the range and source maps, respectively, which are given by $\ran(x)= xx^{-1}$ and $\sou(x)= x^{-1} x$ for 
all $x\in\calG$. 
Given $x,y \in \calG$, the product $xy$ is defined if (and only if) $\sou(x)=\ran(y)$.
An \emph{\'etale groupoid} is a groupoid equipped with a (not necessarily Hausdorff) locally compact topology such that the multiplication and inversion maps are continuous, such that $\calG^{(0)}$ is Hausdorff in the relative topology, and such that the range map (and therefore also the source map) is a local homeomorphism;
see, for example \cite[Definition~2.4.1]{Sim17arX:EtaleGpds}.
We warn the reader that in parts of the literature an \'etale groupoid is assumed to be Hausdorff, while in others it is not assumed to be locally compact.
In an \'etale groupoid $\calG$, the unit space $\calG^{(0)}$ is always an open subset.
However, $\calG^{(0)}$ is closed (hence clopen) if and only if $\calG$ is Hausdorff;
see \cite[Lemmas~2.3.2 and~2.4.2]{Sim17arX:EtaleGpds}.

A \emph{bisection} in a groupoid $\calG$ is a subset $U \subseteq \calG$ for which $\sou |_U$ and $\ran |_U$ are injective.
For a groupoid $\calG$, we let~$\Bo(\calG)$ denote the family of all open bisections, and we let $\Bco(\calG)$ denote the family of all compact, open bisections. 
Both are inverse semigroups under set multiplication and set inversion, given by $$A B = \{ ab \colon \sou(a)=\ran(b) ; a \in A ; b \in B \} \andSep A^{-1} = \{ a^{-1} \colon a \in A \}.$$ 

It is well-known that an \'etale groupoid admits a basis for its topology consisting of open bisections.
An \emph{ample groupoid} is an \'etale groupoid that has a basis for its topology consisting of compact, open bisections.
An \'etale groupoid is ample if and only if its unit space is totally disconnected. 
Using this, it is easy to see that a compact, open bisection in an ample groupoid 
is totaly disconnected in the relative topology.

Given $X \subseteq \calG^{(0)}$, we use the notation 
\[
\calG_X = \big\{ x \in \calG : \sou(x) \in X \big\}, \andSep
\calG^{X} = \big\{ x \in \calG : \ran(x) \in X \big\}.
\]
We shall write $\calG_u$ and $\calG^{u}$ instead of $\calG_{\{u\}}$ and $\calG^{\{u\}}$ whenever $u \in \calG^{(0)}$ is a unit. 

A \emph{homomorphism} between two \'etale groupoids $\calG$ and $\calH$, is a continuous map $\varphi \colon \calG \to \calH$ such that if $(x,y)$ is a composable pair in $\calG$, then $(\varphi(x) , \varphi(y))$ is a composable pair in $\calH$, and in this case $\varphi(xy) = \varphi(x) \varphi(y)$. 
Two \'etale groupoids $\calG, \calH$ are said to be \emph{isomorphic}, in symbols $\calG \cong \calH$, if there is a groupoid homomorphism $\calG \to \calH$ which is also a homeomorphism.
\end{pgr}

\begin{pgr}[Convolution algebras]
\label{pgr:ConvAlgs}
Let $\calG$ be a Hausdorff, \etale{} groupoid, and let $C_c (\calG)$ denote the space of continuous, compactly supported functions $\calG \to \CC$. 
We endow~$C_c (\calG)$ with the convolution product, which for $f,g \in C_c (\calG)$ is given by \[
(f \ast g)(x) = \sum_{y \in \calG_{\sou(x)}} f(x y^{-1}) g(y) = \sum_{y \in \calG^{\ran(x)}} f(y) g(y^{-1}x),
\]
for $x \in \calG$.
Further, $C_c (\calG)$ is equipped with an involution defined by
\[
f^{\ast}(x) = \overline{f(x^{-1})}
\]
for $f \in C_c (\calG)$ and $x \in \calG$.
This turns $C_c (\calG)$ into an associative $\ast$-algebra.

The \emph{$I$-norm} on $C_c (\calG)$ is given by 
\[
\| f \|_I = \max \left\lbrace \sup_{u \in \calG^{(0)}} \sum_{x \in \calG_u} |f(x)| \, , \, \sup_{u \in \calG^{(0)}} \sum_{x \in \calG^{u}} |f(x)| \right\rbrace
\]
for $f \in C_c(\calG)$.
The $I$-norm is submultiplicative and satisfies $\|f^*\|_I=\|f\|_I$.
\end{pgr}

\begin{pgr}[Groupoid \lpoa{s}]
\label{pgr:GpdLPOA}
Let $\calG$ be a Hausdorff, \etale{} groupoid, and let $p \in [1, \infty)$.
Given a unit $u \in \calG^{(0)}$ and $f \in C_c(\calG)$, the operator $\lambda_u(f)$ on $\ell^p (\calG_u)$ is defined by 
\[
\lambda_u (f) (\xi) (x) = \sum_{y \in \calG_u} f(x y^{-1}) \xi (y),
\]
for $x \in \calG_{u}$ and $\xi \in \ell^p (\calG_u)$. 
The resulting map $\lambda_u \colon C_c (\calG) \to \mathcal{B}(\ell^p (\calG_u))$ is a contractive homomorphism. 
The \emph{reduced groupoid $L^p $-operator algebra} associated to~$\calG$, denoted $\Fp(\calG)$, is defined to be the completion of $C_c(\calG)$ under the norm
\[
\lVert f \rVert_{\lambda} := \sup_{u \in \calG^{(0)}} \lVert \lambda_{u}(f) \rVert.
\]

The Banach algebra $\Fp(\calG)$ is approximately unital, with any contractive approximate unit for $C_c(\calG^{(0)})$ being also an approximate unit for $\Fp(\calG)$.
\end{pgr}

It is easy to see that $\Fp(\calG)$ is an \lpoa, as for example the representation $\bigoplus_{u\in\calG^{(0)}}\lambda_u$ on $\bigoplus_{u\in\calG^{(0)}} \ell^p(\mathcal{G}_u)$ is isometric. 
A more convenient isometric representation
of $F^p_\lambda(\calG)$ on an $L^p$-space is constructed as follows.

\begin{pgr}[The left regular representation]
\label{pgr:lambda}
Let $\calG$ be a Hausdorff, \etale{} groupoid, and let $p \in [1, \infty)$.
Denote by $\CC\calG$ the (dense) subspace of $\ell^p(\calG)$ consisting of those functions $\xi\colon \calG\to\CC$ with finite support.
Given $f \in C_c(\calG)$, it is easy to see that 
there is a unique operator $\lambda(f) \in \Bdd(\ell^p(\calG))$ satisfying
\[
\lambda (f) (\xi) = f \ast \xi
\]
for all $\xi \in \CC \calG$. 
We obtain a contractive homomorphism $\lambda \colon C_c (\calG) \to \mathcal{B}(\ell^p (\calG))$.
For each $u \in \calG^{(0)}$, the subspace $\ell^p (\calG_u) \subseteq \ell^p(\calG)$ is invariant under the operators~$\lambda(f)$ for $f \in C_c (\calG)$.
The map $\Psi \colon \bigoplus_{u \in \calG^{(0)}} \ell^p (\calG_u) \to \ell^p (\calG)$ that is the extension of the map $\Psi ((\xi_u)_{u \in F} ) = \sum_{u \in F} \xi_u$, for $\xi_u \in \CC \calG_u$ and $F \subseteq \calG^{(0)}$ finite, is an isometric isomorphism with inverse given by the extension of the map $\Psi^{-1} (\xi) = (\xi_u)_{u \in F} $, where $\xi_u = \xi \cdot 1_{\calG_u}$ and $F = \sou(\supp (\xi))$, for $\xi \in \CC \calG$. 
It follows that 
\[
\lambda(f) = \Psi \circ \bigoplus_{u \in \calG^{(0)}} \lambda_u (f) \circ \Psi^{-1}
\]
for each $f \in C_c (\calG)$. 
Thus, $\| f \|_\lambda = \| \lambda(f) \|$ for each $f \in C_c (\calG)$ and so $\lambda$ extends to an isometric representation $\lambda \colon \Fp(\calG) \to \mathcal{B}(\ell^p (\calG))$, called the \emph{left regular representation} of $\calG$, through which $\Fp(\calG)$ acts non-degenerately on $\ell^p (\calG)$.
\end{pgr}

The following norm estimates will be used repeatedly.

\begin{rmk}
\label{rmk:NormFunctionBisection}
Let $\calG$ be a Hausdorff, \etale{} groupoid, and let $p \in [1, \infty)$.
For any $f \in C_c (\calG)$, we have
\[
\lVert f \rVert_{\infty} \leq \lVert f \rVert_{\lambda} \leq \lVert f \rVert_I.
\]
If $f \in C_c (\calG)$ is supported on a bisection, then $\| f \|_I \leq \|f\|_\infty$ and it follows that
\[
\lVert f \rVert_{\infty} = \lVert f \rVert_{\lambda} = \lVert f \rVert_I.
\]
\end{rmk}

\begin{pgr}[Renault's $j$-map]\label{pgr:Renault}
Let $\calG$ be a Hausdorff, \etale{} groupoid, and let $p \in [1, \infty)$.
The map $j \colon F_{\lambda}^{p}(\calG) \to C_0 (\calG)$ given by 
\[
j(a) (x) = \lambda (a) (\delta_{\sou(x)}) (x) = \lambda_{\sou(x)} (a) (\delta_{\sou(x)}) (x) ,
\]
for $a \in F_{\lambda}^p (\calG)$ and $x \in \calG$, is contractive, linear and injective, and extends the identity on $C_c (\calG)$. 
We shall refer to this map as \emph{Renault's $j$-map}. 
Given $a,b \in \Fp(\calG)$ and $x \in \calG$, we have that $j (a b) (x) = \big(j (a) \ast j (b)\big) (x)$, where the sum defining $\big(j (a) \ast j (b)\big) (x)$ is absolutely convergent;
see \cite[Section~4]{ChoGarThi24LpRigidity}.
\end{pgr}

The $\rm C^*$-core and hermitian elements of a reduced groupoid $L^p$-operator algebra associated with a Hausdorff, \etale{} groupoid are described in the next lemma, which extends \cite[Proposition~5.1]{ChoGarThi24LpRigidity} to the case of non-compact unit spaces.

\begin{lma} 
\label{prp:CoreFpGpd}
Let $\calG$ be a Hausdorff, \'etale groupoid, and let $p \in [1, \infty) \setminus \{2\}$.
Using that $\calG^{(0)} \subseteq \calG$ is clopen, we obtain a natural restriction map
$\mathrm{res} \colon C_0(\calG) \to C_0(\calG^{(0)})$.
Then $\mathrm{res}\circ j$ induces natural isomorphisms
\[
\core(\Fp(\calG)) \cong C_0 (\calG^{(0)}) \andSep
\Fp(\calG)_{\rm h} \cong C_0 (\calG^{(0)}, \RR).
\]
\end{lma}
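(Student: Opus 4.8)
The plan is to produce an explicit isometric homomorphism $\iota \colon C_0(\calG^{(0)}) \to \Fp(\calG)$ inverse to $\mathrm{res}\circ j$, and to show that its image is exactly the C*-core, with the real functions mapping onto the hermitian elements. First I would define $\iota$ on $C_c(\calG^{(0)})$ as the inclusion into $C_c(\calG)$ by extension by zero (which lands in $C_c(\calG)$ since $\calG^{(0)}$ is clopen). As $\calG^{(0)}$ is a bisection, \cref{rmk:NormFunctionBisection} gives $\|g\|_\lambda = \|g\|_\infty$, so $\iota$ extends to an isometric embedding $C_0(\calG^{(0)}) \hookrightarrow \Fp(\calG)$; a direct check shows it is multiplicative, since convolution of functions supported on units is pointwise multiplication. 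A short calculation in the left regular representation (\cref{pgr:lambda}) then shows that $\lambda(\iota(g))$ is the diagonal operator on $\ell^p(\calG)$ sending $\delta_z \mapsto g(\ran(z))\,\delta_z$. When $g$ is real-valued this diagonal operator has modulus-one exponentials, hence is hermitian on the $L^p$-space $\ell^p(\calG)$; pulling back along the unital isometric extension of $\lambda$ (\cref{pgr:LPOA}) shows $\iota(g) \in \Fp(\calG)_\subHerm$. The same computation, combined with the definition of Renault's map, gives $j(\iota(g)) = g$ (extended by zero), so $\mathrm{res}\circ j \circ \iota = \mathrm{id}_{C_0(\calG^{(0)})}$.

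The substance is the reverse inclusion: every hermitian element of $\Fp(\calG)$ lies in $\iota(C_0(\calG^{(0)},\RR))$. Here I would use that, since $\lambda$ is a nondegenerate isometric representation on $\ell^p(\calG)$, it extends via \cref{pgr:LPOA} to a unital isometric representation of the unitization, so an element $a$ is hermitian in $\Fp(\calG)$ if and only if $\lambda(a)$ is a hermitian operator on $\ell^p(\calG)$ (using the characterization $\|\exp(ita)\|=1$ from \cref{pgr:hermitian}). For $p\neq 2$, hermitian operators on an $L^p$-space are multiplication operators (\cite[Corollary~2.8]{ChoGarThi24LpRigidity}), so on $\ell^p(\calG)$, which is $L^p$ of counting measure on the set $\calG$, the operator $\lambda(a)$ is diagonal, $\lambda(a)\delta_z = \phi(z)\,\delta_z$ for some bounded real-valued $\phi\colon\calG\to\RR$. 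Evaluating Renault's map (\cref{pgr:Renault}) yields $j(a)(x) = \lambda(a)(\delta_{\sou(x)})(x) = \phi(\sou(x))\,\delta_{\sou(x)}(x)$, which vanishes unless $x$ is a unit; hence $j(a)$ is supported on $\calG^{(0)}$ and equals $\phi|_{\calG^{(0)}}$ there. Thus $g := \mathrm{res}(j(a)) \in C_0(\calG^{(0)},\RR)$, and by the previous paragraph $j(\iota(g)) = g = j(a)$. Since $j$ is injective, $a = \iota(g)$.

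Combining the two inclusions gives $\Fp(\calG)_\subHerm = \iota(C_0(\calG^{(0)},\RR))$, with $\mathrm{res}\circ j$ and $\iota$ mutually inverse isometric isomorphisms onto $C_0(\calG^{(0)},\RR)$. In particular $\Fp(\calG)_\subHerm$ is closed under multiplication, so by \cref{pgr:core} the C*-core is defined, and from $\core(\Fp(\calG)) = \Fp(\calG)_\subHerm + i\,\Fp(\calG)_\subHerm$ together with $C_0(\calG^{(0)}) = C_0(\calG^{(0)},\RR) + i\,C_0(\calG^{(0)},\RR)$ we obtain $\core(\Fp(\calG)) = \iota(C_0(\calG^{(0)}))$; the restriction of $\mathrm{res}\circ j$ is then the claimed isomorphism onto $C_0(\calG^{(0)})$, its $*$-compatibility coming from the involution $f^*(x) = \overline{f(x^{-1})}$ restricting to complex conjugation on units.

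I expect the main obstacle to be the reverse direction, and more precisely the two ingredients feeding it: that hermiticity of $a$ in the Banach algebra is detected by $\lambda(a)$ being hermitian as an operator (which needs the unital isometric extension of $\lambda$ to the unitization, delicate for $p=1$ in general but available here because $\lambda$ is a concrete nondegenerate $L^1$-representation), and the $p\neq2$ structure theorem forcing $\lambda(a)$ to be diagonal. Once diagonality is in hand, the identification is immediate from the explicit form of Renault's map and the injectivity of $j$, so no control of $\phi$ away from the unit space is required.
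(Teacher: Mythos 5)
Your proposal is correct and follows essentially the same route as the paper: both directions rest on the unital isometric extension of the left regular representation to the unitization, the fact that hermitian operators on an $L^p$-space ($p\neq 2$) are (real) multiplication operators, the injectivity and explicit form of Renault's $j$-map, and the isometric inclusion of $C_c(\calG^{(0)})$ via \cref{rmk:NormFunctionBisection}. Your write-up is slightly more explicit about verifying that $\Fp(\calG)_{\rm h}$ is closed under multiplication (so that the core is defined) and about the diagonal form of $\lambda(\iota(g))$, but these are presentational differences, not a different argument.
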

\begin{proof}
The isometric representation $\lambda \colon \Fp(\calG) \to \mathcal{B}(\ell^p (\calG))$ extends to a unital isometric representation
\[
\widetilde{\lambda} \colon \widetilde{\Fp(\calG)} \to \lambda(\Fp(\calG)) + \CC 1_{\ell^p (\calG)} \subseteq \mathcal{B}(\ell^p (\calG))
\]
because $\Fp(\calG)$ has a contractive approximate identity and acts non-degenerately on $\ell^p (\calG)$ via $\lambda$. 
Given an element $a \in \mathrm{core}(\Fp(\calG))$, we have
\[
\widetilde{\lambda} (a) = \lambda(a) \in \mathrm{core}(\mathcal{B}(\ell^p (\calG))).
\]
By \cite[Proposition~2.7]{ChoGarThi24LpRigidity}, there exists a bounded function $f \colon \calG \to \CC$ such that $\lambda (a)$ is the multiplication operator corresponding to $f$.
It follows that
\[
j(a) (x) = f(x) \delta_{\sou(x)} (x)
\]
for each $x \in \calG$, and consequently $j(a)(x) = 0$ for all $x \notin \calG^{(0)}$.
Using that Renault's $j$-map is injective and that $j(ab) = j(a) \ast j(b)$ for $a,b \in \Fp(\calG)$, it follows that $\mathrm{res} \circ j$ induces an injective homomorphism
\[
\core(\Fp(\calG)) \to C_0 (\calG^{(0)}).
\]

Next, using that $\calG^{(0)}$ is a clopen subset of $\calG$, we obtain a natural inclusion $\iota \colon C_c(\calG^{(0)}) \subseteq C_c(\calG)$.
One verifies that $\mathrm{res} \circ j \circ \lambda \circ \iota$ is the inclusion $C_c(\calG^{(0)}) \subseteq C_0(\calG^{(0)})$, which means that the following diagram commutes:
\[
\xymatrix@R-5pt{
C_c(\calG) \ar[r]^{\lambda} & \Fp(\calG) \ar[r]^{j} & C_0(\calG) \ar@{->>}[d]^{\mathrm{res}} \\
C_c(\calG^{(0)}) \ar@{^{(}->}[u]^{\iota} \ar@{^{(}->}[rr]  & & C_0(\calG^{(0)}).
}
\]
Since all maps are contractive, and since $C_c(\calG^{(0)}) \to C_0(\calG^{(0)})$ is isometric, it follows that $\lambda\circ\iota$ is isometric as well, and therefore extends to a map $C_0(\calG^{(0)}) \to \Fp(\calG)$.
We deduce that $\mathrm{res} \circ j$ induces an isometric, surjective homomorphism from $\core(\Fp(\calG))$ to $C_0 (\calG^{(0)})$, which moreover identifies $\Fp(\calG)_{\rm h}$ with $C_0 (\calG^{(0)}, \RR)$.
\end{proof}

Given $B \in \Bco(\calG)$, we let $\mathbbm{1}_B \in C_c (\calG)$ denote its associated indicator function.

\begin{lma}
\label{prp:BisectionPIMP}
Let $\calG$ be a Hausdorff, \'etale groupoid, and let $p \in [1, \infty)\setminus\{2\}$.
Let $B \in \Bco(\calG)$, and let $f \in C_c(\calG)$ be a function taking values in $\TT$ on $B$ and vanishing on the complement of $B$.
Then $f$ belongs to $\PIMP(\Fp(\calG))$, and we have $f^\dag = f^*$.
Further, $f^\dag \ast f = \mathbbm{1}_{\sou(B)}$ and $f \ast f^\dag = \mathbbm{1}_{\ran(B)}$.

The resulting map $\mathbbm{1}\colon \Bco(\calG) \to \PIMP(\Fp(\calG))$, given by $B\mapsto \mathbbm{1}_B$, is a homomorphism of inverse semigroups.
\end{lma}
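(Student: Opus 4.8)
The plan is to show directly that $f^{*}$ is the Moore--Penrose inverse of $f$, and then to read off part~(3) from the resulting convolution formulas. First I would note that, since $f^{*}(x)=\overline{f(x^{-1})}$, the function $f^{*}$ is supported on $B^{-1}$ and again takes values in $\TT$ there. The heart of the matter is the pair of identities
\[
f^{*} \ast f = \mathbbm{1}_{\sou(B)} \andSep f \ast f^{*} = \mathbbm{1}_{\ran(B)}.
\]
To prove the first, I would expand $(f^{*} \ast f)(x) = \sum_{y \in \calG^{\ran(x)}} \overline{f(y^{-1})}\, f(y^{-1}x)$ and observe that a summand survives only when $y^{-1}$ and $y^{-1}x$ both lie in $B$; writing $b_{1}=y^{-1}$ and $b_{2}=y^{-1}x$ gives $x=b_{1}^{-1}b_{2}$, and composability forces $\ran(b_{1})=\ran(b_{2})$, so injectivity of $\ran|_{B}$ yields $b_{1}=b_{2}$ and hence $x=\sou(b_{1})\in\sou(B)$. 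Exactly one term then survives, contributing $|f(b_{1})|^{2}=1$, which gives the claimed indicator function; the second identity is symmetric. Using \cref{prp:CoreFpGpd}, the real-valued functions $\mathbbm{1}_{\sou(B)}$ and $\mathbbm{1}_{\ran(B)}$, being supported on the clopen set $\calG^{(0)}$, are hermitian idempotents in $\Fp(\calG)$.

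Next I would close the loop on the four Moore--Penrose relations. A further one-term computation shows $\mathbbm{1}_{\ran(B)} \ast f = f$ and $\mathbbm{1}_{\sou(B)} \ast f^{*} = f^{*}$ (using that $\ran(B)$ and $\sou(B)$ are exactly the range and source of the supports of $f$ and $f^{*}$). Combined with the two identities above, this gives $f \ast f^{*} \ast f = f$ and $f^{*} \ast f \ast f^{*} = f^{*}$, while $f \ast f^{*}$ and $f^{*} \ast f$ are hermitian. By uniqueness of the Moore--Penrose inverse (\cref{pgr:MP}), we conclude $f^{\dag}=f^{*}$. Contractivity of both $f$ and $f^{*}$ follows from \cref{rmk:NormFunctionBisection}, since each is supported on a bisection with sup-norm $1$ (the case $B=\emptyset$ being trivial), so $\|f\|_{\lambda}=\|f^{\dag}\|_{\lambda}\leq 1$ and hence $f\in\PIMP(\Fp(\calG))$.

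For part~(3) I would specialize to $f=\mathbbm{1}_{B}$ and verify that $\mathbbm{1}$ is multiplicative and dagger-preserving. Multiplicativity, $\mathbbm{1}_{A} \ast \mathbbm{1}_{B} = \mathbbm{1}_{AB}$, is the same collapse: a summand of $(\mathbbm{1}_{A} \ast \mathbbm{1}_{B})(x)$ survives precisely when $x=ab$ with $a\in A$, $b\in B$ composable, and injectivity of $\ran|_{A}$ makes this decomposition unique, so the value equals $\mathbbm{1}_{AB}(x)$; here $AB\in\Bco(\calG)$ by \cref{pgr:groupoids}. That $(\mathbbm{1}_{B})^{\dag}=(\mathbbm{1}_{B})^{*}=\mathbbm{1}_{B^{-1}}$ is immediate from the definition of the involution together with parts~(1)--(2). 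Since $\Bco(\calG)$ and $\PIMP(\Fp(\calG))$ are inverse semigroups (by \cref{pgr:groupoids} and \cref{prp:PIMP-InvSgp-LPOA}, respectively), this exhibits $\mathbbm{1}$ as a homomorphism of inverse semigroups.

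The hard part will be the bisection bookkeeping in these convolution computations: tracking the composability constraints together with the injectivity of $\sou|_{B}$ and $\ran|_{B}$ so that each sum genuinely reduces to a single term of modulus one. Once these elementary but delicate identities are in hand, the remaining assertions are formal consequences of \cref{prp:CoreFpGpd}, \cref{rmk:NormFunctionBisection}, and the uniqueness of Moore--Penrose inverses.
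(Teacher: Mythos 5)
Your proposal is correct and follows essentially the same route as the paper: compute $f^{*}\ast f$ and $f\ast f^{*}$ by exploiting injectivity of $\ran|_{B}$ and $\sou|_{B}$ to collapse the convolution sums, identify the results as hermitian idempotents supported on the unit space, invoke uniqueness of the Moore--Penrose inverse, and get contractivity from \cref{rmk:NormFunctionBisection}. The only difference is that you spell out the intermediate identities $\mathbbm{1}_{\ran(B)}\ast f=f$ and $f\ast f^{*}\ast f=f$, which the paper leaves implicit.
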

\begin{proof}
Given $x \in \calG$, we have
\[
(f^* \ast f)(x) 
= \sum_{y \in \calG^{\ran(x)}} \overline{f(y^{-1})} f(y^{-1}x).
\]
Using that $f$ is supported on a bisection, we see that $(f^* \ast f)(x)$ can only be nonzero if $x \in \sou(B)$, and in this case it follows that $(f^* \ast f)(x)=1$ since $f(Bx) \in \TT$.
This shows that $f^* \ast f = \mathbbm{1}_{\sou(B)}$.
Similarly, we get $f \ast f^* = \mathbbm{1}_{\ran(B)}$.
Since $\mathbbm{1}_{\sou(B)}$ and $\mathbbm{1}_{\ran(B)}$ are hermitian idempotents in $\Fp(\calG)$, it follows that $f$ is Moore-Penrose invertible with $f^\dag = f^*$.
Since $f$ and $f^*$ are supported on bisections, by \cref{rmk:NormFunctionBisection} we have
\[
\| f \|_\lambda = \| f \|_\infty \leq 1, \andSep
\| f^* \|_\lambda = \| f^* \|_\infty \leq 1.
\]
showing that $f$ belongs to $\PIMP(\Fp(\calG))$.

In particular, for $B \in \Bco(\calG)$, we have $\mathbbm{1}_B \in \PIMP(\Fp(\calG))$ with $\mathbbm{1}_B^\dag = \mathbbm{1}_B^* = 1_{B^{-1}}$.
It is easy to see that $\mathbbm{1}_A \mathbbm{1}_B = 1_{AB}$ in $\Fp(\calG)$ for all $A,B \in \Bco(\calG)$, showing that the map $\Bco(\calG) \to \PIMP(\Fp(\calG))$ is a homomorphism of inverse semigroups. 
\end{proof}

\begin{cor}\label{cor:PhiBisections}
Let $\calG$ be a Hausdorff, \'etale groupoid, and let $p \in [1, \infty)\setminus\{2\}$.
Then there is a canonical homomorphism of inverse semigroups $\Phi_p\colon \Bco(\calG) \to \SInv(\Fp(\calG))$ given by $\Phi_p(B)= [\mathbbm{1}_B]$, for all $B\in \Bco(\calG)$.
\end{cor}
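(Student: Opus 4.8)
The plan is to realize $\Phi_p$ as the composite of two inverse-semigroup homomorphisms that are already at hand. By \cref{prp:BisectionPIMP}, the assignment $\mathbbm{1}\colon \Bco(\calG)\to \PIMP(\Fp(\calG))$, $B\mapsto \mathbbm{1}_B$, is a homomorphism of inverse semigroups. On the other hand, once we know that $\Fp(\calG)$ satisfies the hypotheses of \cref{prp:InvSgpHtpyPIMP}, that theorem guarantees that $\SInv(\Fp(\calG))$ is an inverse semigroup and that the quotient map $q\colon \PIMP(\Fp(\calG))\to \SInv(\Fp(\calG))$ is a homomorphism of inverse semigroups. Setting $\Phi_p:=q\circ\mathbbm{1}$ then produces a homomorphism of inverse semigroups satisfying $\Phi_p(B)=q(\mathbbm{1}_B)=[\mathbbm{1}_B]$ for all $B\in\Bco(\calG)$, as desired.

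The only point requiring verification is therefore that $\Fp(\calG)$ lies within the scope of \cref{prp:InvSgpHtpyPIMP}, that is, that its hermitian idempotents are ultrahermitian and commute. This holds because $\Fp(\calG)$ is an approximately unital \lpoa{} which, via the left regular representation $\lambda\colon \Fp(\calG)\to\Bdd(\ell^p(\calG))$ of \cref{pgr:lambda}, admits a nondegenerate, isometric representation on the $L^p$-space $\ell^p(\calG)$. For $p\in(1,\infty)\setminus\{2\}$ this places $\Fp(\calG)$ directly in the class covered by \cref{prp:PIMP-InvSgp-LPOA,prp:InvSgpHtpyPIMP}; for $p=1$ the same representation realizes $\Fp(\calG)$ nondegenerately and isometrically on the $L^1$-space $\ell^1(\calG)$, which is exactly the additional hypothesis demanded in that case.

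There is essentially no obstacle in this argument, since both constituent maps have already been established in the preceding results; the corollary is a formal consequence of composing them. The adjective \emph{canonical} merely reflects that $\Phi_p$ depends only on $\calG$ and $p$, being assembled from the canonical indicator functions $\mathbbm{1}_B$ and the canonical passage to homotopy classes of \cref{dfn:SInv}.
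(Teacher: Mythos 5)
Your proposal is correct and follows exactly the paper's own argument: compose the inverse-semigroup homomorphism $B\mapsto\mathbbm{1}_B$ from \cref{prp:BisectionPIMP} with the quotient map $\PIMP(\Fp(\calG))\to\SInv(\Fp(\calG))$ from \cref{prp:InvSgpHtpyPIMP}. Your extra check that $\Fp(\calG)$ meets the hypotheses of \cref{prp:InvSgpHtpyPIMP} (including the nondegenerate isometric representation needed when $p=1$) is a welcome, if implicit-in-the-paper, detail.
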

\begin{proof}
This follows immediately by composing 
the canonical map 
$\mathbbm{1}\colon \Bco(\calG) \to \PIMP(\Fp(\calG))$ from \cref{prp:BisectionPIMP} 
with the quotient map $\PIMP(\Fp(\calG)) \to \SInv(\Fp(\calG))$, which is also
a homomorphism of inverse semigroups by \cref{prp:InvSgpHtpyPIMP}.
\end{proof}

The rest of this section is dedicated to showing that, in the setting of Hausdorff, ample  groupoids, the map $\Phi_p$ from \cref{cor:PhiBisections} is an isomorphism of inverse semigroups. 

The proof of the following lemma would be easier if our groupoids were endowed with 
the discrete topology, so that indicator functions of singletons were elements in our groupoid algebras. 
Although this is not the case in general, we are still able to define
right convolution by those elements in a consistent way.

\begin{lma} 
\label{prp:RightConv}
Let $\calG$ be a Hausdorff, \etale{} groupoid, let $p \in [1, \infty)$, and let $y \in \calG$.
Then there is an operator $\rho(\mathbbm{1}_y) \in \Bdd(\ell^p(\calG))$ given by
\[
\rho(\mathbbm{1}_y)(\xi)(x)
= \begin{cases}
\xi\big(xy^{-1}\big), &\text{if $\sou(x)=\sou(y)$} \\
0, &\text{else}
\end{cases}
\]
for $\xi \in \CC\calG$ and $x \in \calG$.
Further, $\rho(\mathbbm{1}_y)$ commutes with each operator in the image of the left regular representation. 
\end{lma}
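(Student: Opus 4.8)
The plan is to first establish the \emph{existence} of the operator $\rho(\mathbbm{1}_y)$ as a bounded operator, and then to verify that it \emph{commutes} with the left regular representation. For the existence, I would check that the formula defines a genuine isometry-like map on the dense subspace $\CC\calG$ of finitely supported functions. The key observation is that $x \mapsto xy^{-1}$ is a bijection from $\{x \in \calG : \sou(x) = \sou(y)\} = \calG_{\sou(y)}$ onto $\{z \in \calG : \sou(z) = \ran(y)\} = \calG_{\ran(y)}$, with inverse $z \mapsto zy$. Consequently, for $\xi \in \CC\calG$, the function $\rho(\mathbbm{1}_y)(\xi)$ is supported on $\calG_{\sou(y)}$ and its nonzero values are exactly a reindexing of the values of $\xi$ on $\calG_{\ran(y)}$. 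This shows that $\rho(\mathbbm{1}_y)$ is in fact a \emph{partial isometry} of $\ell^p$-norm at most $1$ on $\CC\calG$, since $\|\rho(\mathbbm{1}_y)(\xi)\|_p^p = \sum_{x \in \calG_{\sou(y)}} |\xi(xy^{-1})|^p = \sum_{z \in \calG_{\ran(y)}} |\xi(z)|^p \leq \|\xi\|_p^p$. Being bounded on the dense subspace $\CC\calG$, it extends uniquely to a bounded operator $\rho(\mathbbm{1}_y) \in \Bdd(\ell^p(\calG))$.

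For the commutation claim, it suffices by density of $\CC\calG$ to verify that $\rho(\mathbbm{1}_y)\lambda(f) = \lambda(f)\rho(\mathbbm{1}_y)$ on finitely supported functions $\xi \in \CC\calG$, for every $f \in C_c(\calG)$. Here I would simply compute both sides of the equation evaluated at an arbitrary point $x \in \calG$ and check that they agree. Recall that $\lambda(f)(\eta) = f \ast \eta$, so $\big(\lambda(f)(\eta)\big)(x) = \sum_{z \in \calG^{\ran(x)}} f(z)\,\eta(z^{-1}x)$. Applying $\rho(\mathbbm{1}_y)$ first and then $\lambda(f)$, versus the other order, both reduce after the substitution $z^{-1}x \mapsto z^{-1}xy^{-1}$ to the same double sum; the condition $\sou(x) = \sou(y)$ that gates the $\rho(\mathbbm{1}_y)$ action is preserved because left multiplication by $z^{-1}$ (when the product is defined) does not alter the source. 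This is a direct, if slightly bookkeeping-heavy, verification that convolution on the left and the shift-by-$y^{-1}$ on the right are manifestly compatible operations, reflecting the associativity of the groupoid multiplication.

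The main obstacle I anticipate is purely organizational rather than conceptual: keeping track of the source and range conditions so that every sum is over the correct fiber and the bijection $x \mapsto xy^{-1}$ is applied only where it is defined. The subtlety the authors flag in the remark preceding the lemma is that, because $\calG$ is not discrete, the symbol $\mathbbm{1}_y$ does not name an element of $C_c(\calG)$, so $\rho(\mathbbm{1}_y)$ cannot be obtained by completing a convolution homomorphism; it must be built by hand on $\CC\calG$ and shown bounded directly, which is exactly what the fiberwise reindexing argument accomplishes. Once both the boundedness estimate and the pointwise identity $\big(\rho(\mathbbm{1}_y)\lambda(f)\xi\big)(x) = \big(\lambda(f)\rho(\mathbbm{1}_y)\xi\big)(x)$ are established on $\CC\calG$, the full statement follows by taking closures, completing the proof.
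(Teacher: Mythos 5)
Your proposal is correct and follows essentially the same route as the paper: contractivity is obtained by observing that $\rho(\mathbbm{1}_y)(\xi)$ is, up to the reindexing $x \mapsto xy^{-1}$ of the fiber $\calG_{\sou(y)}$, a restriction of $\xi$, and commutation with $\lambda(f)$ is checked on $\CC\calG$ by the same associativity-of-convolution computation the paper invokes, then extended by density. No gaps.
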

\begin{proof}
We show that $\rho(\mathbbm{1}_y) \colon \CC\calG \to \CC\calG$ defined by the formula in the statement is contractive with respect to the $\ell^p$-norms, and therefore extends to a contractive operator in $\Bdd(\ell^p(\calG))$.
Let $\xi \in \CC\calG$.
Viewing elements in $\CC\calG$ as tuples indexed over $\calG$, and using that $x_1y^{-1}=x_2y^{-1}$ implies $x_1=x_2$ for $x_1,x_2,y\in\calG$ with the same source, we see that the nonzero entries in the tuple $\rho(\mathbbm{1}_y)(\xi)$ arise, up to a permutation, also in the tuple~$\xi$, which immediately shows that $\|\rho(\mathbbm{1}_y)(\xi)\|_p \leq \|\xi\|_p$, as desired.

\medskip

Let $\lambda \colon \Fp(\calG) \to \mathcal{B}(\ell^p (\calG))$ denote the left regular representation, which we recall is given by $\lambda(f)(\xi) = f \ast \xi$, for $f \in C_c (\calG)$ and $\xi \in \CC \calG$; see \cref{pgr:lambda}.
We identify $\rho(\mathbbm{1}_y)$ with right convolution with $\mathbbm{1}_y$, the indicator function of the singleton ~$\{y\}$.
Thus, we identify $\lambda(f)\rho(\mathbbm{1}_y)\xi$ with $f\ast(\xi\ast \mathbbm{1}_y)$, and $\rho(\mathbbm{1}_y)\lambda(f)\xi$ with $(f\ast\xi)\ast \mathbbm{1}_y$.
Analogous to the proof of associativity of the convolution product, one then sees that the identity
\[\tag{3.1}\label{eq31}\lambda(f)\rho(\mathbbm{1}_y)\xi=\rho(\mathbbm{1}_y)\lambda(f)\xi\]
holds for all $f \in C_c(\calG)$ all $y \in \calG$, and all $\xi \in \CC\calG$, and then by continuity for all $\xi \in \ell^p(\calG)$.
Thus, $\rho(\mathbbm{1}_y)$ commutes with $\lambda(f)$ for each $f \in C_c(\calG)$.
By continuity, it follows that \eqref{eq31} holds
for all $a \in \Fp(\calG)$, as desired.
\end{proof}

Using Renault's $j$-map $j \colon F_{\lambda}^{p}(\calG) \to C_0 (\calG)$, we define the (open) support of an operator $a \in \Fp(\calG)$ as
\[
\supp(a) := \big\{ x \in \calG : j(a)(x) \neq 0 \big\}.
\]
The next result shows in particular that every MP-partial isometry in $\Fp(\calG)$ is contained in the dense subalgebra $C_c(\calG) \subseteq \Fp(\calG)$.

\begin{prp} 
\label{prp:StructurePIMP-FpGpd}
Let $\calG$ be a Hausdorff, \etale{} groupoid, let $p \in [1, \infty) \setminus \{2\}$, and let $a \in \PIMP(\Fp(\calG))$.
Then $B := \supp(a)$ is a compact, open bisection, and $a$ is $\TT$-valued on its support. 
Further, $\sou(B)=\supp(a^\dag a)$ and $\ran(B)=\supp(aa^\dag)$.
\end{prp}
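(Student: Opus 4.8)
The plan is to analyze the partial isometry $a$ through its image under the left regular representation $\lambda$, exploiting that for $p \neq 2$ the hermitian idempotents $a^\dag a$ and $a a^\dag$ are multiplication operators supported on clopen subsets of $\calG^{(0)}$. By \cref{prp:BisectionPIMP} and \cref{prp:PIMP-InvSgp-LPOA}, both $e := a^\dag a$ and $f := a a^\dag$ are hermitian idempotents in $\Fp(\calG)$, hence lie in $\core(\Fp(\calG)) \cong C_0(\calG^{(0)})$ by \cref{prp:CoreFpGpd}. Since idempotents in $C_0(\calG^{(0)})$ are indicator functions of compact open subsets, there are clopen sets $V, W \subseteq \calG^{(0)}$ with $e = \mathbbm{1}_V$ and $f = \mathbbm{1}_W$; this will eventually give $\sou(B) = V = \supp(e)$ and $\ran(B) = W = \supp(f)$, matching the last assertion of the statement.

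\emph{Support is a bisection.} First I would show that $j(a)$ is $\TT$-valued on its support and that $B := \supp(a)$ is a bisection. The key computational tool is the identity $j(a b)(x) = (j(a) \ast j(b))(x)$ from \cref{pgr:Renault}, applied to the Moore--Penrose relations $a = a a^\dag a$ and to $a^\dag a = \mathbbm{1}_V$. Writing $g := j(a)$ and $h := j(a^\dag)$, the relation $a^\dag a = \mathbbm{1}_V$ translates into
\[
(h \ast g)(u) = \sum_{y \in \calG^u} h(y) g(y^{-1} u) = \mathbbm{1}_V(u)
\]
for $u \in \calG^{(0)}$, with the analogous identity for $g \ast h = \mathbbm{1}_W$. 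To extract bisection-ness, I would argue that for each fixed unit $u$ the convolution $a^\dag a = \mathbbm{1}_V$ being a single indicator forces the fibers $\sou|_B$ and $\ran|_B$ to be injective: two distinct points of $B$ over the same source (or range) would produce, after convolving against $a^\dag$, either cancellation or an entry exceeding the allowed value, contradicting that the contractive operator $a$ satisfies $\|a\|_\lambda \le 1$ together with $h \ast g = \mathbbm{1}_V$. The cleanest route uses the commuting right-convolution operators $\rho(\mathbbm{1}_y)$ from \cref{prp:RightConv}: testing $a$ and its dagger against the vectors $\delta_x$ and using that $\lambda(a)$ commutes with every $\rho(\mathbbm{1}_y)$ lets one compare matrix coefficients $\langle \lambda(a)\delta_y, \delta_x\rangle = g(xy^{-1})$ across the fiber, and the partial-isometry relations then pin each nonzero coefficient to modulus $1$ and force injectivity of source and range on $B$.

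\emph{Openness, compactness, and containment in $C_c(\calG)$.} Openness of $B$ is immediate since $j(a) \in C_0(\calG)$ is continuous, so its support is open. For compactness and the claim that $a \in C_c(\calG)$, I would use that once $B$ is known to be a bisection on which $g = j(a)$ is $\TT$-valued, \cref{rmk:NormFunctionBisection} gives $\|g\|_I = \|g\|_\infty = 1$; combined with $\sou(B) = V$ and $\ran(B) = W$ being compact open sets (as supports of the compactly supported idempotents $\mathbbm{1}_V, \mathbbm{1}_W \in C_c(\calG^{(0)})$), the homeomorphisms $\sou|_B \colon B \to V$ and $\ran|_B \colon B \to W$ exhibit $B$ as compact, and $g$ as a continuous $\TT$-valued function on a compact open bisection, hence as an element of $C_c(\calG)$. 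Finally $\sou(B) = \supp(a^\dag a) = V$ and $\ran(B) = \supp(a a^\dag) = W$ follow by computing the supports of the convolutions $g^* \ast g$ and $g \ast g^*$ exactly as in the proof of \cref{prp:BisectionPIMP}.

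\emph{Main obstacle.} The hard part will be the injectivity (bisection) step: unlike the situation in \cref{prp:BisectionPIMP}, where one \emph{starts} from a bisection, here the bisection property must be \emph{deduced} from the purely operator-algebraic data that $a$ is an MP-partial isometry with $a^\dag a, a a^\dag \in C_0(\calG^{(0)})$. Ruling out two distinct groupoid elements sharing a source or range within $\supp(a)$ requires carefully converting the convolution identity $j(a^\dag) \ast j(a) = \mathbbm{1}_V$ into pointwise constraints and using the contractivity $\|a\|_\lambda, \|a^\dag\|_\lambda \le 1$ to preclude the extra off-diagonal mass that a non-bisection support would generate; the commuting operators $\rho(\mathbbm{1}_y)$ are the essential device for making this rigorous without access to indicator functions of singletons in the algebra itself.
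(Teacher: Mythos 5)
Your overall skeleton matches the paper's: identify $a^\dag a$ and $aa^\dag$ with indicator functions, use the commuting right-convolution operators $\rho(\mathbbm{1}_y)$ from \cref{prp:RightConv}, deduce that $\supp(a)$ is a compact open bisection on which $j(a)$ is $\TT$-valued, and finish as in \cref{prp:BisectionPIMP}. The preliminary step (hermitian idempotents lie in $\core(\Fp(\calG))\cong C_0(\calG^{(0)})$, hence are of the form $\mathbbm{1}_V$ for compact open $V\subseteq\calG^{(0)}$) is fine and even gives compactness of $B$ slightly more cheaply than the paper does.

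However, the step you yourself flag as the main obstacle --- deducing that $B$ is a bisection and that $j(a)$ is unimodular on $B$ --- is genuinely missing, and the mechanism you gesture at is not sufficient as stated. Knowing only that $a$ and $a^\dag$ are contractive with $a^\dag a=\mathbbm{1}_V$ and $aa^\dag=\mathbbm{1}_W$ for compact open $V,W\subseteq\calG^{(0)}$ does \emph{not} preclude ``off-diagonal mass'': for $p=2$ these hypotheses are satisfied by the unitary $\tfrac{1}{\sqrt 2}\bigl(\begin{smallmatrix}1&1\\1&-1\end{smallmatrix}\bigr)$ in $C^*_\lambda(\calG)$ for $\calG$ the full equivalence relation on two points, whose support is all of $\calG$ and which is nowhere unimodular. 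So any correct argument must use $p\neq 2$ a second time, beyond the identification of hermitian idempotents with multiplication operators, and your proposal never says where. The paper's answer is Lamperti's theorem: since $\lambda(a^\dag)\lambda(a)$ and $\lambda(a)\lambda(a^\dag)$ are the identities on $\ell^p(\calG^V)$ and $\ell^p(\calG^W)$ respectively, $\lambda(a)$ corestricts to an \emph{invertible isometry} $\ell^p(\calG^V)\to\ell^p(\calG^W)$, which for $p\neq 2$ must be of the form $\overline{m}_f\overline{w}_\sigma$ for a $\TT$-valued $f$ and a bijection $\sigma$. Only after this spatial structure is in hand do the commutation relations with $\rho(\mathbbm{1}_y)$ do their work, forcing $\sigma(xy)=\sigma(x)y$ and $f(xy)=f(x)$, whence $\sigma$ is determined by its restriction to $V$ and $\supp(a)=\sigma(V)$ is a bisection. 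To repair your proof you would either have to invoke Lamperti's theorem at this point (as the paper does, via \cite[Theorem~2.12]{Gar21ModernLp}) or reprove its relevant case by hand, e.g.\ via the equality case of Clarkson's inequality applied to the unit vectors $\lambda(a)\delta_u$, $u\in V$; the informal ``cancellation or an entry exceeding the allowed value'' argument does not substitute for this.
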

\begin{proof}
Let $\lambda \colon \Fp(\calG) \to \mathcal{B}(\ell^p (\calG))$ denote the left regular representation.
Then $\lambda(a^\dag a)$ and $\lambda(aa^\dag)$ are hermitian idempotents in $\Bdd(\ell^p(\calG))$.
By \cite[Proposition~2.7]{ChoGarThi24LpRigidity}, there are subsets $E,F \subseteq \calG$ such that $\lambda(a^\dag a)$ is the multiplication  operator by the indicator function of~$E$, and similarly $\lambda(aa^\dag)$ is multiplication by the indicator function of~$F$. 

For $y \in \calG$, we consider the right convolution operator $\rho(\mathbbm{1}_y) \in \Bdd(\ell^p(\calG))$ as in \cref{prp:RightConv}.
If $\delta_x \in \ell^p(\calG)$ denotes the indicator function of $x \in \calG$, then
\[
\rho(\mathbbm{1}_y)(\delta_x) 
= \begin{cases}
\delta_{xy^{-1}}, &\text{if } \sou(x) = \sou(y) \\
0, &\text{otherwise}
\end{cases}.
\]

By \cref{prp:RightConv}, $\rho(\mathbbm{1}_y)$ commutes with $\lambda(a^\dag a)$.
In particular, for $x \in E$ and $y \in \calG$ with $\sou(x)=\ran(y)$, we get
\[
\lambda(a^\dag a)(\delta_{xy})
= \lambda(a^\dag a)\rho(\mathbbm{1}_{y^{-1}})(\delta_x)
= \rho(\mathbbm{1}_{y^{-1}})\lambda(a^\dag a)(\delta_x)
= \rho(\mathbbm{1}_{y^{-1}})(\delta_x)
= \delta_{xy},
\]
which shows that $xy \in E$.
Thus, for $U := \ran(E) \subseteq \calG^{(0)}$, we have $E=\calG^U$.
Similarly, for $V := \ran(F)$, we get $F=\calG^V$. Observe that
$U=\supp(a^\dag a)$ and $V=\supp(aa^\dag)$.

\medskip

Note that $\lambda(a)$ induces an invertible isometry from $\ell^p(\calG^U)$ to $\ell^p(\calG^V)$.
By Lamperti's theorem (see \cite[Theorem~2.12]{Gar21ModernLp}), there exist a function $f \colon \calG^V \to \TT$ and a bijection $\sigma \colon \calG^U \to \calG^V$ such that the corestriction $\lambda(a) \colon \ell^p(\calG^U) \to \ell^p(\calG^V)$ satisfies
\[
\lambda(a)(\xi)(x)
= f(x)\xi(\sigma^{-1}(x))
\]
for all $\xi \in \ell^p(\calG^V)$ and $x \in \calG^V$.

Define operators $\overline{m}_f, \overline{w}_\sigma$ on $\ell^p(\calG)$ by setting
\[
\overline{m}_f(\xi)(x) 
= \begin{cases}
f(x)\xi(x), &\text{if } x\in \calG^V \\
0, &\text{else}
\end{cases}, \andSep 
\overline{w}_\sigma(\xi)(x) 
= \begin{cases}
\xi(\sigma^{-1}(x)), &\text{if } x\in \calG^V \\
0, &\text{else}
\end{cases}
\]
for $\xi \in \ell^p(\calG)$ and $x \in \calG$.
Note that $\lambda(a) = \overline{m}_f \overline{w}_\sigma$.

\medskip

By \cref{prp:RightConv}, $\lambda(a)$ commutes with $\rho(\mathbbm{1}_{y^{-1}})$ for each $y \in \calG$.
Thus, given $\xi \in \CC \calG$ and $x,y \in \calG$, we have  
\begin{equation} 
\label{eq:CommuteWith1y}
\big( \overline{m}_f\overline{w}_\sigma \rho(\mathbbm{1}_{y^{-1}}) (\xi) \big)(x) 
= \big( \rho(\mathbbm{1}_{y^{-1}})\overline{m}_f\overline{w}_\sigma (\xi) \big)(x).
\end{equation}
The left-hand side (LHS) of \eqref{eq:CommuteWith1y} is zero whenever $x \notin \calG^V$ or  $\sou(\sigma^{-1}(x)) \neq \ran(y)$, and otherwise we get
\[
\Big( \overline{m}_f\overline{w}_\sigma \rho(\mathbbm{1}_{y^{-1}}) (\xi) \Big)(x) 
= f(x) \cdot \Big( \rho(\mathbbm{1}_{y^{-1}}) (\xi) \Big)(\sigma^{-1}(x)) 
= f(x) \xi\big( \sigma^{-1}(x) y \big).
\]
The right-hand side (RHS) of \eqref{eq:CommuteWith1y} is zero whenever $\sou(x) \neq \ran(y)$ or $xy \notin \calG^V$, and otherwise we get
\[
\big( \rho(\mathbbm{1}_{y^{-1}})\overline{m}_f\overline{w}_\sigma (\xi) \big)(x)
= \big( \overline{m}_f\overline{w}_\sigma(\xi) \big) (xy)
= f(xy) \xi\big( \sigma^{-1}(xy) \big).
\]

Fix $x,y\in\calG$ satisfying $\sou(x) = \ran(y)$ and $x\in \calG^V$, and note 
that $xy \in \calG^V$. 
Since LHS must be equal to RHS, 
setting $\xi := \delta_{\sigma^{-1}(xy)} \in \CC \calG$, the two sides of \eqref{eq:CommuteWith1y} are
\[
\mathrm{LHS}
= \begin{cases}
f(x) \delta_{\sigma^{-1}(xy)} (\sigma^{-1}(x)y) , &\text{if $\sou(\sigma^{-1}(x)) = \ran(y)$} \\
0, &\text{if $\sou(\sigma^{-1}(x)) \neq \ran(y)$} \\
\end{cases}
\andSep
\mathrm{RHS} = f(xy) 
\]
Since $f$ is nonzero on $\calG^V$, and since $xy \in \calG^V$, we get 
\begin{equation} 
\label{eq:Rel-sigma-inverse}
\sou(\sigma^{-1}(x)) 
= \ran(y) 
= \sou(x), \andSep
\sigma^{-1}(xy)
= \sigma^{-1}(x)y,
\end{equation}
and 
\begin{equation} 
\label{eq:Rel-f}
f(xy)
= f(x).
\end{equation}

For $x \in \calG^V$, taking $y=x^{-1}$ in \eqref{eq:Rel-f}, we get
\[
f(\ran(x))
= f(xx^{-1})
= f(x).
\]

For $x \in \calG^U$ and $y \in \calG$ with $\sou(x) = \ran(y)$, using that $\sigma$ is bijective and applying \eqref{eq:Rel-sigma-inverse}, we get
\begin{equation} 
\label{eq:Rel-sigma}
\sou(\sigma(x)) 
= \ran(y) 
= \sou(x), \andSep
\sigma(xy) = \sigma(x)y.
\end{equation}

For $x \in \calG^U$, applying \eqref{eq:Rel-sigma} with $\ran(x)$ in place of $x$, and $x$ in place of~$y$, we get
\begin{equation} 
\label{eq:Rel-sigma-general}
\sigma(x)
= \sigma(\ran(x)x)
= \sigma(\ran(x))x.
\end{equation}
and consequently 
\begin{equation} 
\label{eq:range-sigma}
\ran(\sigma(x))
= \ran\big( \sigma(\ran(x)) \big).
\end{equation}

Consider now the set 
\[
B_\sigma 
:= \big\{ \sigma(u) : u \in U \big\} 
\subseteq \calG.
\]

\textbf{Claim 1:} \emph{$B_\sigma $ is a bisection}.
To show that the restriction of the source map to~$B_\sigma$ is injective, let $u,v \in U$ satisfy $\sou(\sigma(u)) = \sou(\sigma(v))$.
Using \eqref{eq:Rel-sigma} at the second and fourth step, we get
\[
u 
= \sou(u) 
= \sou(\sigma(u))
= \sou(\sigma(v)) 
= \sou(v)
= v,
\]
and thus $\sigma(u)=\sigma(v)$.

To show that the restriction of the range map to $B_\sigma$ is injective, let $u,v \in U$ satisfy $\ran(\sigma(u)) = \ran(\sigma(v))$.
Then $\sigma(u)^{-1}\sigma(v)$ is well-defined and belongs to $\calG_U^U$.
Using that $\ran(\sigma(u)^{-1}\sigma(v)) = \sou(\sigma(v))$ at the second step, and using that $\sou(\sigma(u))=u$ by \eqref{eq:Rel-sigma} at the third step, we have:
\begin{align*}
\sigma\big( \sigma(u)^{-1}\sigma(v) \big) 
&\stackrel{\eqref{eq:Rel-sigma-general}}{=} \sigma\Big( \ran\big( \sigma(u)^{-1}\sigma(v) \big) \Big) \sigma(u)^{-1}\sigma(v) \\
&\ = \ \sigma\Big( \sou\big( \sigma(u) \big) \Big) \sigma(u)^{-1}\sigma(v) \\
&\ = \ \sigma(u) \sigma(u)^{-1} \sigma(v) 
= \sigma(v).
\end{align*}
Since $\sigma$ is bijective, we get $v = \sigma(u)^{-1} \sigma(v)$, from which it follows that $\sigma(u) = \sigma(v)$.
\vspace{.02cm}

Given $x \in \calG^U$, the unique element in $B_\sigma$ with source $\ran(x)$ is $\sigma(\ran(x))$. Using this at the first step, we get
\[
B_\sigma x
= \sigma(\ran(x))x
\stackrel{\eqref{eq:Rel-sigma-general}}{=} \sigma(x).
\]

\textbf{Claim 2:} \emph{$\sou(B_\sigma)=U$ and $\ran(B_\sigma)=V$.}
For the source of $B_\sigma$, using that $\sou(\sigma(u))=u$ for $u \in U$ by \eqref{eq:Rel-sigma}, we have
\[
\sou(B_\sigma) 
= \big\{ \sou(\sigma(u)) : u \in U \big\} 
= U.
\]
We turn to the range.
For $u \in U$, we have $\sigma(u) \in \calG^V$ and thus $\ran(\sigma(u)) \in V$, which shows that $\ran(B_\sigma) \subseteq V$.
On the other hand, given $v \in V$, let us consider $\sigma^{-1}(v) \in \calG^U$ and set $u := \ran(\sigma^{-1}(v)) \in U$.
Using \eqref{eq:range-sigma}, we get
\[
v 
= \ran(v)
= \ran(\sigma(\sigma^{-1}(v)))
= \ran(\sigma(\ran(\sigma^{-1}(v))))
= \ran(\sigma(u)) 
\in \ran(B_\sigma).
\]

\medskip

\textbf{Claim 3:} \emph{We have $B_\sigma =\supp(a)$}.
We will use Renault's $j$-map from \cref{pgr:Renault} to obtain a 
description of $\supp(a)$ in terms of $\sigma$.
For $x \in \calG$, we have
\begin{align}
\label{eq:ja}
\begin{split}
j(a)(x)
= j(\overline{m}_f \overline{w}_\sigma)(x) 
&= \big( \overline{m}_f \overline{w}_\sigma(\delta_{\sou(x)}) \big) (x) \\
&= \begin{cases}
f(x)\delta_{\sou(x)}(\sigma^{-1}(x)), &\text{ if $x \in \calG^V$} \\
0, & \text{ else}
\end{cases} \\
&= \begin{cases}
f(x), &\text{ if $x \in \calG^V$ and $\sou(x)=\sigma^{-1}(x)$} \\
0, & \text{ else}
\end{cases}.
\end{split}
\end{align}
Since $f$ is nonzero on $\calG^V$, it follows that
\[
\supp(a)
= \big\{ x \in \calG^V : \sou(x) = \sigma^{-1}(x) \big\}.
\]
We now show that the above set is equal to $B_\sigma$.
For $u \in U$, the element $x:=\sigma(u)$, which is in $B_\sigma$,
belongs to $\calG^V$. Then
\[
\sou(x)
= \sou(\sigma(u))
\stackrel{\eqref{eq:Rel-sigma}}{=} \sou(u)
= u
= \sigma^{-1}(x).
\]
This shows that $x \in \supp(a)$, and so $B_\sigma \subseteq \supp(a)$.
Conversely, let $x \in \calG^V$ satisfy $\sou(x)=\sigma^{-1}(x)$.
Then $\sigma^{-1}(x)$ is a unit and belongs to $\calG^U$, and thus $\sigma^{-1}(x) \in U$.
Then $x = \sigma(\sigma^{-1}(x)) \in B_\sigma$. This proves the claim.
\vspace{.2cm}

\textbf{Claim 4:} \emph{$B_\sigma$ is compact and open}.
It is clear that $B_\sigma$ is open, being the support of the continuous function $j(a)$ (see Claim~3).
On the other hand, since the image of $j(a)$ is contained in $\TT\cup\{0\}$, it follows that 
\[\supp(a)=\big\{x\in\calG\colon |j(a)(x)|>\tfrac{1}{2}\big\}.\]
Since $j(a)$ belongs to $C_0(\calG)$, the above set is compact, as desired.
\end{proof}

Ampleness will be used for the first time in the next lemma.

\begin{lma}
\label{prp:BasicHomotopyLemma}
Let $\calG$ be a Hausdorff, ample groupoid, let $p \in [1, \infty) \setminus \{2\}$, and let $a \in \PIMP(\Fp(\calG))$. Set $B := \supp(a)$.
Then $a \simeq \mathbbm{1}_B$ in $\PIMP(\Fp(\calG))$ .
\end{lma}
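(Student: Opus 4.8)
The plan is to use the structure result \cref{prp:StructurePIMP-FpGpd} to realize $a$ as a continuous $\TT$-valued function supported on the compact, open bisection $B$, and then to build an explicit homotopy that ``unwinds'' the phase of this function down to the constant~$1$. Concretely, by \cref{prp:StructurePIMP-FpGpd} we have $a \in C_c(\calG)$, so writing $f := j(a)$ we obtain a continuous function $f \colon \calG \to \CC$ that is $\TT$-valued on $B$ and vanishes off $B$, with $f = a$ as elements of $\Fp(\calG)$ (since $j$ extends the identity on $C_c(\calG)$).

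The decisive point, and the first place where ampleness enters, is that $B$ is a Stone space. Indeed, $\sou|_B$ is a homeomorphism onto the open subset $\sou(B) \subseteq \calG^{(0)}$, and the unit space of an ample groupoid is totally disconnected; hence $B$ is compact, Hausdorff and totally disconnected. On such a space every continuous map to $\TT$ admits a continuous logarithm: each point has a clopen neighbourhood on which $f$ has image inside a proper arc of $\TT$, finitely many such neighbourhoods cover $B$ by compactness, and the finite Boolean algebra they generate refines them to a finite clopen partition $B = V_1 \sqcup \cdots \sqcup V_n$. Choosing a branch of $\log$ on each piece yields continuous functions $g_i \colon V_i \to \RR$ with $\exp(ig_i) = f|_{V_i}$, which patch (the $V_i$ being clopen and disjoint) to a single continuous $g \colon B \to \RR$ satisfying $\exp(ig) = f$ on $B$.

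With $g$ in hand, I would define, for $t \in [0,1]$, the function $f_t$ equal to $\exp\big(i(1-t)g\big)$ on $B$ and to $0$ elsewhere. Since $B$ is clopen, each $f_t$ is continuous on all of $\calG$ and compactly supported, hence lies in $C_c(\calG)$; being $\TT$-valued on $B$ and vanishing off it, $f_t$ belongs to $\PIMP(\Fp(\calG))$ by \cref{prp:BisectionPIMP}. At the endpoints we have $f_0 = f = a$ and $f_1 = \mathbbm{1}_B$. To see that $t \mapsto f_t$ is norm-continuous, I would invoke \cref{rmk:NormFunctionBisection}: as every $f_t$ is supported on the bisection~$B$, its $\lambda$-norm agrees with its sup-norm, so using $|\exp(i\alpha) - \exp(i\beta)| \leq |\alpha - \beta|$ we get $\| f_t - f_s \|_\lambda = \| f_t - f_s \|_\infty \leq |t-s|\,\|g\|_\infty$. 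This exhibits a norm-continuous path in $\PIMP(\Fp(\calG))$ from $a$ to $\mathbbm{1}_B$, proving $a \simeq \mathbbm{1}_B$.

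The main obstacle is precisely the existence of the continuous logarithm $g$, which is where ampleness is indispensable: for a merely \etale{} groupoid the bisection $B$ may have nontrivial $\check{H}^1(B;\ZZ)$, so $f$ need not be null-homotopic and no such homotopy to $\mathbbm{1}_B$ can exist. Once $g$ is constructed, the remaining verifications are routine, the only mild care being the continuity of each $f_t$ on all of $\calG$ (guaranteed by $B$ being clopen) and the reduction of norm-continuity of the path to the sup-norm estimate via \cref{rmk:NormFunctionBisection}.
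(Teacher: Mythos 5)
Your proof is correct, and it takes a mildly but genuinely different route from the paper's. The paper does not construct a global continuous logarithm: instead it splits $B$ into two disjoint open pieces $X, Y$ on which the restriction of $a$ is a \emph{non-surjective} map into $\TT$, notes that any non-surjective continuous circle-valued map on a compact Hausdorff space is null-homotopic (rotate away from $1$, then take a branch of $\log$), and patches the two resulting homotopies. You instead use total disconnectedness more aggressively to produce a finite clopen partition of $B$ on each piece of which $a$ lands in a proper arc, glue branches of $\log$ into a single continuous $g\colon B \to \RR$ with $\exp(ig)=j(a)|_B$, and unwind via $f_t=\exp(i(1-t)g)$. Both arguments rest on the same two pillars --- ampleness forces $B$ to be a Stone space, hence $[B,\TT]$ is trivial, and \cref{rmk:NormFunctionBisection} converts $\sup$-norm continuity of the path into $\lambda$-norm continuity --- so the endgame is identical. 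Your version has the small advantage of an explicit Lipschitz estimate $\|f_t-f_s\|_\lambda \le |t-s|\,\|g\|_\infty$ and a single uniform formula for the homotopy; the paper's version gets away with a coarser decomposition (only two pieces, merely open rather than clopen) at the cost of an extra rotation step. Your closing remark about $\check H^1(B;\ZZ)$ obstructing the statement for general \'etale groupoids correctly identifies why ampleness is indispensable.
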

\begin{proof}
First of all if, if $X$ is a compact Hausdorff space and $f \colon X \to \TT$ is a continuous, non-surjective map, then $f$ is homotopic to the constant function $1$ via a homotomopy $[0,1] \to C(X,\TT)$ that is continuous for the $\|\cdot\|_\infty$-norm on  $C(X,\TT)$.
Indeed, if $z \in \TT$ is not contained in the image of $f$, using rotations we 
see that $f$ is homotopic to a continuous function $g\colon X\to \TT$ which does
not contain $1\in\TT$ in its image. By choosing an analytic branch of the logarithm, it is easy to check that $t\mapsto g^{1-t}$ is a homotopy between $g$ and the constant funciton $1$.

\medskip

Now, using Renault's $j$-map, we view $a$ as a function on $\calG$.
By \cref{prp:StructurePIMP-FpGpd}, the support $B$ is a compact, open bisection, and $a$ is $\TT$-valued on $B$.
Since $\calG$ is Hausdorff and ample, it follows that $B$ is a totally disconnected, compact, Hausdorff space.
We can therefore find disjoint open subsets $X, Y \subseteq B$ such that $B = X \cup Y$, and such that the restrictions $a|_{X} \colon X \to \TT$ and  $a|_{Y} \colon Y \to \TT$ are not surjective.

By the above, there are homotopies $t \mapsto f_t \in C(X,\TT)$ and $t \mapsto g_t \in C(Y,\TT)$ with $f_0 = a|_{X}$, $f_1 = \mathbbm{1}_{X}$, $g_0 = a|_{Y}$, and $g_1 = \mathbbm{1}_{Y}$.
For $t \in [0,1]$, define $h_t \colon \calG \to \TT$ by
\[
h_t(x) 
= \begin{cases}
f_t(x), &\text{if } x \in X \\
g_t(x), &\text{if } x \in Y \\
0, &\text{if } x \notin B \\
\end{cases}.
\]
We have $h_0=a$ and $h_1=\mathbbm{1}_B$, and each $h_t$ belongs to $\PIMP(\Fp(\calG))$ by \cref{prp:BisectionPIMP}.
For each $s,t \in [0,1]$, using that $h_s-h_t$ belongs to $C_c(\calG)$ and is supported on the bisection $B$, we have 
\[
\| h_s - h_t \|_\lambda = \| h_s - h_t \|_\infty 
\]
and it follows that $t \mapsto h_t \in \PIMP(\Fp(\calG))$ is continuous.
\end{proof}

\begin{thm} 
\label{prp:Bco_SFp}
Let $\calG$ be a Hausdorff, ample groupoid, and let $p \in [1, \infty) \setminus \{2\}$.
Then the map $\Phi_p\colon \Bco(\calG) \to \SInv(\Fp(\calG))$ from 
\cref{cor:PhiBisections} is an isomorphism of inverse semigroups.
\end{thm}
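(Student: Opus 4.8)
The plan is to prove that $\Phi_p$ is a bijection. Since $\Phi_p$ is already known to be a homomorphism of inverse semigroups by \cref{cor:PhiBisections}, and since a bijective homomorphism of inverse semigroups is automatically an isomorphism (its set-theoretic inverse preserves products, hence also generalized inverses), bijectivity will suffice. Both surjectivity and injectivity reduce to the structural results already established for MP-partial isometries in $\Fp(\calG)$.

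For surjectivity, I would start from an arbitrary class $[a] \in \SInv(\Fp(\calG))$ represented by some $a \in \PIMP(\Fp(\calG))$. By \cref{prp:StructurePIMP-FpGpd}, the set $B := \supp(a)$ is a compact, open bisection, so $B \in \Bco(\calG)$; and by \cref{prp:BasicHomotopyLemma} (which is where ampleness is used) we have $a \simeq \mathbbm{1}_B$. Hence $[a] = [\mathbbm{1}_B] = \Phi_p(B)$, so $\Phi_p$ is surjective.

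For injectivity, suppose $\Phi_p(A) = \Phi_p(B)$ for $A, B \in \Bco(\calG)$, that is, $\mathbbm{1}_A \simeq \mathbbm{1}_B$, and choose a norm-continuous path $t \mapsto a_t$ in $\PIMP(\Fp(\calG))$ with $a_0 = \mathbbm{1}_A$ and $a_1 = \mathbbm{1}_B$. Since Renault's $j$-map is contractive (see \cref{pgr:Renault}), the map $t \mapsto j(a_t) \in C_0(\calG)$ is continuous for the sup-norm; in particular, for each fixed $x \in \calG$ the scalar function $t \mapsto j(a_t)(x)$ is continuous. By \cref{prp:StructurePIMP-FpGpd}, each $a_t$ is $\TT$-valued on its support, so $j(a_t)(x) \in \TT \cup \{0\}$ for every $t$. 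As $[0,1]$ is connected and $\TT \cup \{0\}$ is disconnected (its components $\TT$ and $\{0\}$ lie at distance $1$), the function $t \mapsto j(a_t)(x)$ is either nowhere zero or identically zero. Comparing the endpoints $t = 0$ and $t = 1$ yields that $x \in A$ if and only if $x \in B$; since $x \in \calG$ was arbitrary, $A = B$.

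The main obstacle is the injectivity argument: one must rule out that a homotopy through MP-partial isometries could continuously ``slide'' the support from $A$ to a genuinely different bisection. The key point making this impossible is that, by \cref{prp:StructurePIMP-FpGpd}, every element of $\PIMP(\Fp(\calG))$ takes values only in $\TT \cup \{0\}$ after applying $j$; this quantization, combined with the connectedness of $[0,1]$ and the contractivity of the $j$-map, forces the support to be a homotopy invariant. I expect that all the genuinely hard analytic work will already have been carried out in establishing \cref{prp:StructurePIMP-FpGpd} and \cref{prp:BasicHomotopyLemma}, so that once those are in hand the present statement follows cleanly.
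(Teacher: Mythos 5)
Your proposal is correct and follows essentially the same route as the paper: surjectivity by combining \cref{prp:StructurePIMP-FpGpd} with \cref{prp:BasicHomotopyLemma}, and injectivity by observing that along a homotopy each $j(a_t)$ is $\TT\cup\{0\}$-valued, so continuity of the $j$-map plus connectedness of $[0,1]$ forces the support to be constant. Your write-up merely makes explicit the pointwise connectedness argument that the paper leaves as ``it follows that $\supp(a_t)$ does not change with $t$.''
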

\begin{proof}
By \cref{prp:BisectionPIMP}, the map is a homomorphism of inverse semigroups.
Surjectivity follows by combining \cref{prp:StructurePIMP-FpGpd} with \cref{prp:BasicHomotopyLemma}.
To see injectivity, let $A, B \in \Bco(\calG)$ satisfy $\mathbbm{1}_A \simeq \mathbbm{1}_B$ inside $\PIMP(\Fp(\calG))$, say via a continuous path $t \mapsto a_t$.
By \cref{prp:StructurePIMP-FpGpd}, each $a_t$ takes values in $\TT\cup\{0\}$.
Using also that Renault's $j$-map is continuous, it follows that $\supp(a_t)$ does not change with $t$, and thus $A = \supp(a_0) = \supp(a_1) = B$.
\end{proof}

\section{Reconstructing groupoids} 
\label{sec:ReconstrGpds}

In this section, we prove that a Hausdorff, ample groupoid can be reconstructed from its reduced $L^p$-operator algebra, for any $p \in [1, \infty) \setminus \{2\}$;
see \cref{prp:LpReconstruction}.
We deduce an analogous result for the symmetrized \lpoa{s} in \cref{prp:SymLpRigidity}, and also for the $I$-norm completion of $C_c(\calG)$;
see \cref{prp:LIRigidity}.


\begin{thm}
\label{prp:LpReconstruction}
Let $\calG$ be a Hausdorff, ample groupoid, and let $p \in [1, \infty) \setminus \{2\}$. 
Then 
\[
\calG 
\cong \calG_{\mathrm{tight}} \big( \SInv(\Fp(\calG)) \big).
\]
\end{thm}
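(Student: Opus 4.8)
The plan is to combine the main result of the previous section, \cref{prp:Bco_SFp}, which identifies $\SInv(\Fp(\calG))$ with the inverse semigroup $\Bco(\calG)$ of compact, open bisections, with Exel's reconstruction theorem \cite[Theorem~4.8]{Exe10ReconstrTotDiscGpds}. More precisely, by \cref{prp:Bco_SFp} there is a canonical isomorphism of inverse semigroups
\[
\Phi_p \colon \Bco(\calG) \xrightarrow{\ \cong\ } \SInv(\Fp(\calG)).
\]
Since the groupoid of tight filters (the \emph{tight groupoid}) is a functorial construction on inverse semigroups, any isomorphism of inverse semigroups induces an isomorphism of the associated tight groupoids. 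Thus $\Phi_p$ yields a topological groupoid isomorphism
\[
\calG_{\mathrm{tight}} \big( \Bco(\calG) \big)
\cong \calG_{\mathrm{tight}} \big( \SInv(\Fp(\calG)) \big).
\]

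The remaining ingredient is to identify $\calG$ itself with $\calG_{\mathrm{tight}}(\Bco(\calG))$. This is exactly the content of Exel's reconstruction result for ample groupoids: for a Hausdorff, ample groupoid $\calG$, the inverse semigroup $\Bco(\calG)$ of compact, open bisections is a ``basis'' for $\calG$ in Exel's sense, and the tight groupoid of this inverse semigroup recovers $\calG$ as a topological groupoid. One should verify that our $\Bco(\calG)$ satisfies the hypotheses of \cite[Theorem~4.8]{Exe10ReconstrTotDiscGpds}: in particular, that $\Bco(\calG)$ generates the topology of $\calG$ (which holds precisely because $\calG$ is ample, as recorded in \cref{pgr:groupoids}), and that it is closed under the relevant lattice operations. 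Concatenating the two isomorphisms then gives
\[
\calG \cong \calG_{\mathrm{tight}}\big( \Bco(\calG) \big)
\cong \calG_{\mathrm{tight}}\big( \SInv(\Fp(\calG)) \big),
\]
as desired.

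I expect the proof to be genuinely short, since almost all of the work has already been carried out: the analytic heavy lifting (Lamperti's theorem, the structure of MP-partial isometries, the homotopy argument) is packaged into \cref{prp:Bco_SFp}, and the topological reconstruction is quoted from Exel. The main point requiring care is matching conventions: one must confirm that the notion of ``tight groupoid'' $\calG_{\mathrm{tight}}(-)$ used in the statement coincides with the groupoid Exel associates to a wide inverse semigroup, and that the functoriality of $\calG_{\mathrm{tight}}(-)$ on isomorphisms is legitimate (an isomorphism of inverse semigroups induces a homeomorphism on tight spectra and respects the germ/groupoid structure). The only conceivable obstacle is therefore bookkeeping around these definitional compatibilities rather than any substantial new mathematics.
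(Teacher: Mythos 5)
Your proposal is correct and follows essentially the same route as the paper: both invoke Exel's reconstruction theorem to identify $\calG$ with $\calG_{\mathrm{tight}}(\Bco(\calG))$ and then transport along the isomorphism $\Bco(\calG) \cong \SInv(\Fp(\calG))$ from \cref{prp:Bco_SFp}. The only detail the paper adds that you do not mention is a remark that second-countability is not actually needed in Exel's argument, since the reconstruction of the unit space as the tight spectrum of the idempotent lattice holds in general.
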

\begin{proof}
We will use Exel's reconstruction result from \cite{Exe10ReconstrTotDiscGpds}, where, given an inverse semigroup $S$, he defines the \emph{tight groupoid} $\calG_{\mathrm{tight}} (S)$ of $S$, as the groupoid of germs for the natural action of $S$ on the tight spectrum of the lattice of idempotents $E(S)$. Since
$\calG$ is an ample groupoid, it follows that $\calG \cong \calG_{\mathrm{tight}} (\Bco(\calG))$ by  \cite[Theorem~4.8]{Exe10ReconstrTotDiscGpds}.
We note that second-countability is not needed in the proof of \cite[Theorem~4.8]{Exe10ReconstrTotDiscGpds}.
Indeed, the reconstruction of the unit space $\calG^{(0)}$ as the tight spectrum of the lattice of idempotents $E(\Bco(\calG))$ holds in general;
see \cite[Theorem~3.6]{Exe10ReconstrTotDiscGpds}.
Applying \cite[Theorem~4.8]{Exe10ReconstrTotDiscGpds} at the first step, and \cref{prp:Bco_SFp} at the second, we get the desired isomorphisms
\[
\calG 
\cong \calG_{\mathrm{tight}} \big( \Bco(\calG) \big) 
\cong \calG_{\mathrm{tight}} \big( \SInv(\Fp(\calG)) \big).\qedhere
\] 
\end{proof}

\begin{cor}
\label{prp:LpRigidity}
Let $\calG$ and $\calH$ be Hausdorff, ample groupoids, and let $p \in [1, \infty) \setminus \{2\}$. 
Then:
\[
\calG \cong \calH \quad \text{if and only if} \quad
\Fp (\calG) \cong \Fp (\calH).
\]
\end{cor}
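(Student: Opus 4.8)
The plan is to deduce the corollary from the reconstruction theorem \cref{prp:LpReconstruction}, which expresses each groupoid as the tight groupoid of the inverse semigroup $\SInv(\Fp(-))$ of homotopy classes of MP-partial isometries. The point is that this inverse semigroup is an invariant of the \emph{isometric} isomorphism class of the Banach algebra, so an isometric isomorphism of the algebras should propagate through the tight-groupoid construction to an isomorphism of groupoids. Accordingly I read $\Fp(\calG)\cong\Fp(\calH)$ as an isometric isomorphism, consistent with Theorem~B of the introduction.

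For the forward direction, I would argue that $\calG\mapsto\Fp(\calG)$ is functorial for groupoid isomorphisms. A topological groupoid isomorphism $\psi\colon\calG\to\calH$ is in particular a homeomorphism intertwining the range and source maps, so $f\mapsto f\circ\psi^{-1}$ is a $\ast$-isomorphism $C_c(\calG)\to C_c(\calH)$ respecting the convolution product. Moreover $\psi$ induces an isometric bijection $\ell^p(\calG)\to\ell^p(\calH)$ which, via \cref{pgr:lambda}, intertwines the two left regular representations; hence $f\mapsto f\circ\psi^{-1}$ preserves $\|\cdot\|_\lambda$ and extends to an isometric isomorphism $\Fp(\calG)\cong\Fp(\calH)$.

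For the (harder and more interesting) converse, suppose $\varphi\colon\Fp(\calG)\to\Fp(\calH)$ is an isometric isomorphism. First I would check that $\varphi$ is approximately unital: if $(e_j)_j$ is a contractive approximate identity for $\Fp(\calG)$, then each $\varphi(e_j)$ is contractive (as $\varphi$ is isometric) and, writing an arbitrary $b\in\Fp(\calH)$ as $b=\varphi(a)$, one has $\|b-b\varphi(e_j)\|=\|a-ae_j\|\to 0$ and similarly on the left, so $(\varphi(e_j))_j$ is a contractive approximate identity for $\Fp(\calH)$; the same applies to $\varphi^{-1}$. Since $\varphi$ and $\varphi^{-1}$ are thus approximately unital, contractive homomorphisms, \cref{rem:ContrHomPIMP} applied to both shows that $\varphi$ restricts to a bijection $\PIMP(\Fp(\calG))\to\PIMP(\Fp(\calH))$. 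Because $\varphi$ is a homeomorphism, it carries norm-continuous paths in $\PIMP(\Fp(\calG))$ to norm-continuous paths in $\PIMP(\Fp(\calH))$, so it preserves the homotopy relation and descends to a bijection $\SInv(\Fp(\calG))\to\SInv(\Fp(\calH))$; this bijection respects the multiplication $[a][b]=[ab]$ and hence is an isomorphism of inverse semigroups, both of which are genuinely inverse semigroups by \cref{prp:InvSgpHtpyPIMP}.

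Finally I would feed this through Exel's construction: the tight groupoid $\calG_{\mathrm{tight}}(-)$ is functorial for isomorphisms of inverse semigroups, so combining the two instances of \cref{prp:LpReconstruction} with the isomorphism $\SInv(\Fp(\calG))\cong\SInv(\Fp(\calH))$ yields
\[
\calG\cong\calG_{\mathrm{tight}}\big(\SInv(\Fp(\calG))\big)\cong\calG_{\mathrm{tight}}\big(\SInv(\Fp(\calH))\big)\cong\calH.
\]
The main obstacle is not any single calculation but rather ensuring that $\SInv$ is genuinely functorial on isometric isomorphisms: all of its defining data (contractivity, hermitian elements, Moore-Penrose inverses, and the homotopy relation) are preserved by an isometric isomorphism, but this rests on the structural results of the previous sections that make $\SInv(\Fp(-))$ an inverse semigroup at all. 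Once this functoriality is in hand, the corollary is immediate from \cref{prp:LpReconstruction}.
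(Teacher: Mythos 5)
Your proposal is correct and follows exactly the paper's route: the forward direction via functoriality of $\Fp$, and the converse by observing that an isometric isomorphism induces an isomorphism $\SInv(\Fp(\calG))\cong\SInv(\Fp(\calH))$ and then invoking \cref{prp:LpReconstruction}. The only difference is that you spell out details (approximate unitality, preservation of $\PIMP$ and of homotopies) that the paper leaves implicit.
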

\begin{proof}
The forward implication is clear.
For the backwards implication, assume that there is an isometric isomorphism of \ba{s} $\Fp (\calG) \cong \Fp (\calH)$.
Then there is an isomorphism of inverse semigroups $\SInv(\Fp(\calG)) \cong \SInv(\Fp(\calH))$.
Using \cref{prp:LpReconstruction}, we obtain 
\[ 
\calG 
\cong \calG_{\mathrm{tight}}\big( \SInv(\Fp(\calG)) \big) 
\cong \calG_{\mathrm{tight}}\big( \SInv(\Fp(\calH)) \big) 
\cong \calH. \qedhere
\] 
\end{proof}

Next, we define the algebras of symmetrized $p$-pseudofunctions for an ample groupoid. These are Banach $\ast$-algebras that have been introduced and 
studied in \cite{AusOrt22GpdsHermBAlg}.
For group algebras, these have been considered in 
\cite{ElkPoo23arX:PropTBAlg, Elk24arX:SymmPseudofctLp}, and
feature prominently in \cite{SamWie20QuasiHermAmen, SamWie24ExoticCAlgsGeomGps}.

\begin{dfn} 
\label{dfn:SymFp}
Let $\calG$ be a Hausdorff, \etale{} groupoid, and let $p \in [1, \infty)$.
Define a norm on $C_c (\calG)$ by
\[
\|f \|_{p, \ast} 
:= \max \big\{ \| f \|_{\Fp} \, , \, \| f^\ast \|_{\Fp} \big\}
\]
for $f \in C_c (\calG)$. 
The \emph{symmetrized {$p$}-pseudofunction algebra}, denoted $\Fpast(\calG)$, is the Banach $\ast$-algebra obtained by completing $C_c (\calG)$ in this norm.
\end{dfn}

\begin{pgr}
Let $\calG$ be a Hausdorff, \etale{} groupoid.
We let $\LIG (\calG)$ denote the completion of~$C_c(\calG)$ with respect to the $I$-norm from \cref{pgr:ConvAlgs}.
It is not hard to see that the $I$-norm agrees with $\| \cdot \|_{1, \ast}$, and thus $\LIG (\calG) = F_{\lambda}^{1, \ast} (\calG)$.

Given $p \in (1,\infty)$ with dual H\"{o}lder exponent $q$ (such that $\tfrac{1}{p} + \tfrac{1}{q} = 1$), with the obvious modifications to \cite[Lemma~3.5]{AusOrt22GpdsHermBAlg}, an alternative description of the norm $\| \cdot \|_{p, \ast}$ is 
\begin{equation} 
\label{eq:SymFpNorm}
\|f \|_{p, \ast} 
= \max \big\{ \| f \|_{\Fp}, \| f \|_{F_{\lambda}^{q}} \big\},
\end{equation}
for $f \in C_c (\calG)$.
Note that $\Fpast(\calG) = F_{\lambda}^{q, \ast}(\calG)$. 
Moreover, the inequalities
\[
\| \cdot \|_{p, \ast} \geq \| \cdot \|_{F_{\lambda}^{q}}, \andSep
\| \cdot \|_{p, \ast} \geq \| \cdot \|_{\Fp}
\]
show that the identity on $C_c (\calG)$ extends to contractive homomorphisms 
\[
\varphi_p \colon \Fpast(\calG) \to \Fp(\calG), \andSep
\varphi_q \colon \Fpast(\calG) \to \Fq(\calG).
\]

Combining this with \eqref{eq:SymFpNorm}, we see that the diagonal map
\[
\varphi=\varphi_p\oplus \varphi_q \colon \Fpast(\calG) \to \Fp(\calG) \oplus \Fq(\calG)
\]
is an isometric homomorphism of Banach algebras. 
\end{pgr}

\begin{lma} 
\label{prp:FpStar-Fp-Injective}
Let $\calG$ be a Hausdorff, \etale{} groupoid and let $p \in [1, 2)$. 
Then the canonical contractive homomorphism $\varphi_p \colon \Fpast(\calG) \to \Fp(\calG)$ is injective.
\end{lma}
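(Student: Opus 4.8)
The plan is to reduce injectivity to a statement about $\|\cdot\|_{p,\ast}$-Cauchy sequences in $C_c(\calG)$ and then to pin down a limit using Renault's $j$-map. Since $C_c(\calG)$ is dense in $\Fpast(\calG)$ by construction, any $a$ in the kernel of $\varphi_p$ can be written as the $\|\cdot\|_{p,\ast}$-limit of a sequence $(f_n)$ in $C_c(\calG)$. Because $\varphi_p$ is contractive and restricts to the identity on $C_c(\calG)$, the hypothesis $\varphi_p(a)=0$ forces $\|f_n\|_{\Fp}\to 0$. The goal is then to show that $\|f_n^\ast\|_{\Fp}\to 0$ as well, for then $\|f_n\|_{p,\ast}=\max\{\|f_n\|_{\Fp},\|f_n^\ast\|_{\Fp}\}\to 0$ (by \cref{dfn:SymFp}), whence $\|a\|_{p,\ast}=\lim_n\|f_n\|_{p,\ast}=0$ and $a=0$.

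First I would exploit that the involution is isometric on $\Fpast(\calG)$: since $\|g^\ast\|_{p,\ast}=\|g\|_{p,\ast}$ for $g\in C_c(\calG)$, the sequence $(f_n^\ast)$ is again $\|\cdot\|_{p,\ast}$-Cauchy, hence $\|\cdot\|_{\Fp}$-Cauchy, and therefore converges in the Banach algebra $\Fp(\calG)$ to some element $b$. The entire problem then reduces to identifying this limit, namely to showing $b=0$. This is precisely where care is needed: for $p\neq 2$ the involution does \emph{not} extend to a bounded map on $\Fp(\calG)$, so one cannot simply deduce $f_n^\ast\to 0$ from $f_n\to 0$ within $\Fp(\calG)$. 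It is exactly to circumvent this failure that we pass through the symmetrized algebra, where $\ast$ is isometric, and then read off the limit at the level of functions.

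The crux is therefore to prove $b=0$, and here I would invoke that Renault's $j$-map $j\colon \Fp(\calG)\to C_0(\calG)$ (see \cref{pgr:Renault}) is contractive, injective, and restricts to the identity on $C_c(\calG)$. On the one hand, $f_n^\ast\to b$ in $\Fp(\calG)$ yields $j(f_n^\ast)\to j(b)$ uniformly, by contractivity of $j$. On the other hand, $j(f_n^\ast)=f_n^\ast$ as functions on $\calG$, and since $f_n^\ast(x)=\overline{f_n(x^{-1})}$ we have $\|f_n^\ast\|_\infty=\|f_n\|_\infty\le\|f_n\|_\lambda\to 0$ by \cref{rmk:NormFunctionBisection}. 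Hence $j(b)=0$, and injectivity of $j$ gives $b=0$, that is, $\|f_n^\ast\|_{\Fp}\to 0$, which closes the argument.

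The main obstacle, as indicated, is the identification of the limit $b$: the naive hope that $\ast$ is continuous on $\Fp(\calG)$ fails for $p\neq 2$, and the proof hinges on combining the isometry of $\ast$ on the symmetrized algebra (to produce the limit $b$ at all) with the function-level rigidity provided by the $j$-map and the estimate $\|\cdot\|_\infty\le\|\cdot\|_\lambda$ (to force $b=0$). I note that this particular argument does not appear to use $p<2$ beyond the intended application, relying only on facts valid for all $p\in[1,\infty)$; the stated range is presumably what is needed later, the case $p=2$ being trivial since $\varphi_2$ is then the identity.
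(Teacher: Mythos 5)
Your proof is correct, but it takes a genuinely different route from the paper's. The paper first disposes of $p=1$ separately (there $\|\cdot\|_{1,\ast}$ is the $I$-norm and $\varphi_1$ is the canonical map $\LIG(\calG)\to F^1_\lambda(\calG)$), and for $1<p<2$ it works with the H\"older-dual description $\|f\|_{p,\ast}=\max\{\|f\|_{\Fp},\|f\|_{\Fq}\}$ and the isometric embedding $\varphi_p\oplus\varphi_q\colon \Fpast(\calG)\to \Fp(\calG)\oplus\Fq(\calG)$: given $a\in\ker\varphi_p$ and $f_n\to a$, it tests against finitely supported $\xi\in\CC\calG$ and compares pointwise limits of $f_n\ast\xi$ taken in $\ell^p$ and in $\ell^q$ to conclude $\varphi_q(a)=0$, whence $a=0$ by isometry of $\varphi_p\oplus\varphi_q$. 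You instead stay with the defining formula $\|f\|_{p,\ast}=\max\{\|f\|_{\Fp},\|f^\ast\|_{\Fp}\}$, use the isometry of the involution for $\|\cdot\|_{p,\ast}$ to produce a limit $b=\lim_n f_n^\ast$ in $\Fp(\calG)$, and identify $b=0$ via Renault's $j$-map combined with $\|f_n^\ast\|_\infty=\|f_n\|_\infty\le\|f_n\|_\lambda\to 0$; every step checks out against \cref{pgr:Renault} and \cref{rmk:NormFunctionBisection}. Your approach buys uniformity: it treats all $p\in[1,\infty)$ at once, with no separate case for $p=1$ and no appeal to the dual-exponent description \eqref{eq:SymFpNorm}; the price is that it leans on the injectivity of $j$ on $\Fp(\calG)$, a fact the paper's $1<p<2$ argument in effect re-derives by hand for $\Fq(\calG)$ through non-degeneracy of convolution against $\CC\calG$. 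Your closing observation is also accurate: the hypothesis $p<2$ plays no role in the injectivity argument itself.
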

\begin{proof}
We treat the case $p=1$ first. 
Recall that $\LIG (\calG) = F_{\lambda}^{1,\ast}(\calG)$, and $F_{\lambda}^{1}(\calG) = \overline{\LIG (\calG)}^{\| \cdot \|_{F_{\lambda}^{1}}}$, so that $\varphi_1$ is just the canonical map $\LIG (\calG) \to F_{\lambda}^{1}(\calG)$, which is injective.

Assume that $1 < p < 2$. Let $a\in F_{\lambda}^{p,\ast}(\calG)$ and suppose that
$\varphi_p (a) = 0$.
Choose a sequence $(f_n)_{n\in\NN}$ in $C_c (\calG)$ such that 
$\|f_n- a\|_{p,\ast}\to 0$.
Since $\varphi_p$ is continuous and $\varphi_p(a)=0$, we get 
$\|f_n\|_{F^p_\lambda} \to 0$. 
In particular, fixing any $\xi \in \CC \calG$, we have $\|f_n \ast \xi\|_{\ell^p(\calG)} \to 0$. Upon passing to a subsequence if necessary, we may
assume that $f_n \ast \xi \to 0$ pointwise in $\calG$.

Note that $\|f_n- \varphi_q(a)\|_{F^q_\lambda}\to0$, and thus
$\|f_n \ast \xi - \varphi_q (a) \ast \xi\|_{\ell^q(\calG)}\to 0$. As before, upon passing to a subsequence if necessary,
we may assume that $f_n \ast \xi \to \varphi_q (a) \ast \xi$ pointwise in $\calG$.
Since pointwise limits are unique, we obtain that $\varphi_q (a) \ast \xi = 0$
for all $\xi\in \CC\calG$. 
Density of $\CC \calG$ in $\ell^q (\calG)$ forces $\varphi_q (a) = 0$. We deduce that $\varphi(a) = (\varphi_p (a) , \varphi_q (a)) = 0$, 
and since $\varphi$ is isometric, we conclude that $a = 0$, as desired.
\end{proof}

The next result is the analog of \cref{prp:PIMP-InvSgp-LPOA} for symmetrized $p$-pseudo\-function groupoid algebras. (Note that these algebras are not in general 
$L^p$-operator algebras, so that said proposition does not actually apply to them.)

\begin{prp}
\label{prp:PIMP-SymFP}
Let $\calG$ be a Hausdorff, \etale{} groupoid, and let $p \in [1, 2)$.
Then hermitian idempotents in $\Fpast(\calG)$ are ultrahermitian and commute.
It follows that $\PIMP(\Fpast(\calG))$ and $\SInv(\Fpast(\calG))$ are inverse semigroups.

Further, the natural map $\varphi_p \colon \Fpast(\calG) \to \Fp(\calG)$ induces isomorphisms of inverse semigroups
\[
\PIMP(\Fpast(\calG)) \cong \PIMP(\Fp(\calG)) \andSep
\SInv(\Fpast(\calG)) \cong \SInv(\Fp(\calG)).
\]
\end{prp}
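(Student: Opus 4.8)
The plan is to transport the structure theory already available for $\Fp(\calG)$ across the injective, contractive map $\varphi_p$, exploiting throughout that a function supported on a single bisection has the same norm in all the algebras in play. The crux is the first assertion, since $\Fpast(\calG)$ is \emph{not} an $L^p$-operator algebra and so the ultrahermitian and commuting properties cannot be read off directly. To get around this I would embed $\Fpast(\calG)$ isometrically into a direct sum of genuine $L^p$-type algebras. For $p \in (1,2)$ the diagonal map $\varphi = \varphi_p \oplus \varphi_q \colon \Fpast(\calG) \to \Fp(\calG) \oplus \Fq(\calG)$ is an isometric, approximately unital homomorphism (as recorded just before \cref{prp:FpStar-Fp-Injective}). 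In the $\ell^\infty$-direct sum an idempotent is a hermitian, respectively ultrahermitian, idempotent exactly when each coordinate is, and since $p,q \neq 2$, \cref{prp:PIMP-InvSgp-LPOA} makes the coordinates ultrahermitian and commuting. Applying \cref{prp:MapHermIdem}(1) to push hermitian idempotents forward and \cref{prp:MapHermIdem}(2) to pull ultrahermiticity back along the isometric $\varphi$, I conclude that hermitian idempotents in $\Fpast(\calG)$ are ultrahermitian; commutativity follows from commutativity in the codomain and injectivity of $\varphi$. By the same pullback (using that $\mathbbm{1}_K$ is a hermitian idempotent in both $\Fp(\calG)$ and $\Fq(\calG)$ via \cref{prp:CoreFpGpd}), each indicator $\mathbbm{1}_K$ of a compact, open $K \subseteq \calG^{(0)}$ is a hermitian idempotent in $\Fpast(\calG)$. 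For $p=1$ the conjugate exponent is $q=\infty$, so $\Fq$ is unavailable; here I would instead use the isometric homomorphism $a \mapsto (\varphi_1(a), \varphi_1(a^*))$ into $F^1_\lambda(\calG) \oplus F^1_\lambda(\calG)^{\mathrm{op}}$ (note $f \mapsto \varphi_1(f^*)$ is a homomorphism into the opposite algebra), observing that hermiticity and ultrahermiticity of an idempotent are unchanged on passing to the opposite algebra, since the relevant exponentials and two-sided compressions are literally the same elements. With these properties established, \cref{prp:PIMP-InvSgp} and \cref{prp:InvSgpHtpyPIMP} immediately yield that $\PIMP(\Fpast(\calG))$ and $\SInv(\Fpast(\calG))$ are inverse semigroups.

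Turning to the isomorphism on $\PIMP$, since $\varphi_p$ is contractive, approximately unital, and injective (\cref{prp:FpStar-Fp-Injective}), \cref{rem:ContrHomPIMP} shows that it restricts to an injective homomorphism of inverse semigroups $\PIMP(\Fpast(\calG)) \to \PIMP(\Fp(\calG))$ with $\varphi_p(a^\dag) = \varphi_p(a)^\dag$. For surjectivity I would take $b \in \PIMP(\Fp(\calG))$; by \cref{prp:StructurePIMP-FpGpd} it is a function in $C_c(\calG)$ which is $\TT$-valued on a compact, open bisection $B = \supp(b)$ and vanishes elsewhere. Viewing the same function inside $C_c(\calG) \subseteq \Fpast(\calG)$, the purely algebraic convolution identities $b^* * b = \mathbbm{1}_{\sou(B)}$ and $b * b^* = \mathbbm{1}_{\ran(B)}$ of \cref{prp:BisectionPIMP} exhibit $b$ as Moore--Penrose invertible in $\Fpast(\calG)$ with $b^\dag = b^*$ (the two idempotents being hermitian in $\Fpast(\calG)$ by the previous paragraph), and \cref{rmk:NormFunctionBisection} gives $\|b\|_{p,\ast} = \|b\|_\infty \leq 1$ and $\|b^*\|_{p,\ast} \leq 1$. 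Hence $b \in \PIMP(\Fpast(\calG))$ and maps to itself. Combining injectivity and surjectivity, $\PIMP(\Fpast(\calG))$ and $\PIMP(\Fp(\calG))$ are the \emph{same} subset of $C_c(\calG)$, and $\varphi_p$ is an isomorphism of inverse semigroups between them.

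For the isomorphism on $\SInv$ it remains to check that the two homotopy relations on this common set of functions coincide. As $\|\cdot\|_{\Fp} \leq \|\cdot\|_{p,\ast}$, every $\|\cdot\|_{p,\ast}$-continuous path is $\|\cdot\|_{\Fp}$-continuous, giving one inclusion of homotopy classes. For the converse I would take a $\|\cdot\|_{\Fp}$-continuous path $t \mapsto a_t$ in $\PIMP(\Fp(\calG))$; by \cref{prp:StructurePIMP-FpGpd} each $a_t$ is $\TT \cup \{0\}$-valued, and since Renault's $j$-map is contractive (\cref{pgr:Renault}) the functions $a_t$ vary continuously in the supremum norm. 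Because $\supp(a_t) = \{x : |a_t(x)| > \tfrac12\}$, the support is locally constant in $t$, hence constant, say equal to $B$. Every difference $a_s - a_t$ is then supported on the single bisection $B$, so \cref{rmk:NormFunctionBisection} gives $\|a_s - a_t\|_{p,\ast} = \|a_s - a_t\|_\infty = \|a_s - a_t\|_{\Fp}$, and the path is $\|\cdot\|_{p,\ast}$-continuous. Thus the two homotopy relations agree, and $\varphi_p$ descends to the desired isomorphism $\SInv(\Fpast(\calG)) \cong \SInv(\Fp(\calG))$.

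I expect the genuine obstacle to be the first step. Since $\Fpast(\calG)$ lies outside the class of $L^p$-operator algebras, neither ultrahermiticity nor commutativity of hermitian idempotents is available off the shelf, and the entire argument hinges on locating the correct isometric embedding into a direct sum of $L^p$-type algebras; the case $p=1$ is the most delicate, as the missing exponent $q=\infty$ forces one to replace $\Fq(\calG)$ by the opposite algebra $F^1_\lambda(\calG)^{\mathrm{op}}$ and to verify that the relevant spectral conditions survive this passage. The remaining steps are comparatively routine, relying on the already-proved structure theorem \cref{prp:StructurePIMP-FpGpd} and on the coincidence of norms for functions supported on a bisection.
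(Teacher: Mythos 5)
Your proposal is correct and follows essentially the same route as the paper's proof: transfer ultrahermiticity and commutativity of hermitian idempotents backwards along the isometric embedding into $\Fp(\calG)\oplus\Fq(\calG)$ using \cref{prp:PIMP-InvSgp-LPOA} and \cref{prp:MapHermIdem}, deduce surjectivity on $\PIMP$ from the structure theorem \cref{prp:StructurePIMP-FpGpd}, and identify the two homotopy relations via the coincidence of the norms on functions supported on a fixed compact, open bisection. Your explicit handling of $p=1$ via the (conjugate-linear) isometric embedding $a\mapsto(\varphi_1(a),\varphi_1(a^*))$ into $F^1_\lambda(\calG)\oplus F^1_\lambda(\calG)^{\mathrm{op}}$ is in fact more careful than the paper, whose one-line appeal to the map into $\Fp(\calG)\oplus\Fq(\calG)$ does not literally apply when $q=\infty$.
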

\begin{proof}
The fact that hermitian idempotents in $\Fpast(\calG)$ are ultrahermitian and commute follows from \cref{prp:PIMP-InvSgp-LPOA} and \cref{prp:MapHermIdem},
using that the natural map $\Fpast(\calG) \to \Fp(\calG) \oplus \Fq(\calG)$ is isometric.
Therefore, $\PIMP(\Fpast(\calG))$ and $\SInv(\Fpast(\calG))$ are inverse semigroups by \cref{prp:PIMP-InvSgp} and \cref{prp:InvSgpHtpyPIMP}.

Since $\varphi_p \colon \Fpast(\calG) \to \Fp(\calG)$ is an approximately unital, contractive homomorphism, it preserves MP-partial isometries by \cref{rem:ContrHomPIMP} and thus induces a natural homomorphism $\Psi_p\colon \PIMP(\Fpast(\calG)) \to \PIMP(\Fp(\calG))$ of inverse semigroups, which
is injective by \cref{prp:FpStar-Fp-Injective}.
By \cref{prp:StructurePIMP-FpGpd}, every MP-partial isometry in $\Fp(\calG)$ is a $\TT\cup\{0\}$-valued map supported on a compact, open bisection, and in particular contained in the dense subalgebra $C_c(\calG)$.
It is immediate that such elements are also MP-partial isometries in $\Fpast(\calG)$, which shows that $\Psi_p$ is surjective and thus an isomorphism.

The isomorphism $\Psi_p$ is contractive and therefore induces a surjective homomorphism $\SInv(\Fpast(\calG)) \to \SInv(\Fp(\calG))$ of inverse semigroups.
To show injectivity of this map, we note that every homotopy in $\PIMP(\Fp(\calG))$ can be viewed as a $\|\cdot\|_\infty$-continuous homotopy in $C_c(\calG)$, which then is also continuous in $\PIMP(\Fpast(\calG))$ by arguments as in the proof of \cref{prp:BasicHomotopyLemma}. We omit the details.
\end{proof}

We obtain the analog of the reconstruction \cref{prp:LpReconstruction} and the rigidity \cref{prp:LpRigidity} in the context of symmetrized algebras.

\begin{thm}
\label{prp:SymLpRigidity}
Let $\calG$ be a Hausdorff, ample groupoid, and let $p \in [1, 2)$. 
Then there is a canonical groupoid isomorphism
$\calG \cong \calG_{\mathrm{tight}} \big( \SInv(\Fpast(\calG)) \big)$.
In particular, if $\calH$ is another Hausdorff, ample groupoid, then:
\[
\calG \cong \calH \quad \text{if and only if} \quad
\Fpast(\calG) \cong \Fpast(\calH).
\]
\end{thm}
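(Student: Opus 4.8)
The plan is to deduce this statement directly from the two pieces of machinery already in place, namely the reconstruction \cref{prp:LpReconstruction} for the reduced algebra and the inverse-semigroup comparison \cref{prp:PIMP-SymFP}, together with the functoriality of Exel's tight-groupoid construction. First I would invoke \cref{prp:PIMP-SymFP} to obtain the natural isomorphism of inverse semigroups
\[
\SInv(\Fpast(\calG)) \cong \SInv(\Fp(\calG))
\]
induced by the contractive homomorphism $\varphi_p \colon \Fpast(\calG) \to \Fp(\calG)$. Since $\calG_{\mathrm{tight}}(-)$ sends isomorphic inverse semigroups to isomorphic groupoids, this yields $\calG_{\mathrm{tight}}(\SInv(\Fpast(\calG))) \cong \calG_{\mathrm{tight}}(\SInv(\Fp(\calG)))$, and composing with the reconstruction isomorphism $\calG \cong \calG_{\mathrm{tight}}(\SInv(\Fp(\calG)))$ from \cref{prp:LpReconstruction} produces the desired canonical isomorphism $\calG \cong \calG_{\mathrm{tight}}(\SInv(\Fpast(\calG)))$.

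For the rigidity statement, the forward implication is immediate: a topological groupoid isomorphism $\calG \cong \calH$ induces an isometric $\ast$-isomorphism $C_c(\calG) \cong C_c(\calH)$, which respects the intrinsically defined norm $\|\cdot\|_{p,\ast}$ and hence extends to an isometric isomorphism $\Fpast(\calG) \cong \Fpast(\calH)$. For the converse I would begin with an isometric isomorphism $\psi \colon \Fpast(\calG) \to \Fpast(\calH)$ of Banach algebras, \emph{not} assumed to preserve the involution. The key point is that such a $\psi$ automatically transports all the data entering the definition of $\SInv$: being an algebra isomorphism it satisfies $\psi(\exp(ita)) = \exp(it\psi(a))$, so being isometric it carries hermitian elements to hermitian elements, and therefore preserves Moore-Penrose invertibility and the set $\PIMP$; being a norm-homeomorphism it preserves homotopy. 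Thus $\psi$ descends to an isomorphism $\SInv(\Fpast(\calG)) \cong \SInv(\Fpast(\calH))$ of inverse semigroups.

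Feeding this last isomorphism into $\calG_{\mathrm{tight}}(-)$ and applying the reconstruction isomorphism just established to both $\calG$ and $\calH$, I would obtain
\[
\calG \cong \calG_{\mathrm{tight}}(\SInv(\Fpast(\calG))) \cong \calG_{\mathrm{tight}}(\SInv(\Fpast(\calH))) \cong \calH,
\]
completing the argument.

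Almost all of the genuine difficulty has already been absorbed into the preceding results, so at this stage the argument is essentially a composition of canonical identifications. In particular \cref{prp:PIMP-SymFP} does the serious work, relying on \cref{prp:FpStar-Fp-Injective} to control the non-isometric comparison map $\varphi_p$ and on \cref{prp:StructurePIMP-FpGpd} to pin down MP-partial isometries as $\TT\cup\{0\}$-valued functions supported on compact open bisections. Consequently, the only point requiring care here is ensuring that every identification is genuinely natural, so that it is compatible with Exel's functor and the resulting groupoid isomorphism is canonical; I expect the subtlest step to be confirming that a merely isometric Banach-algebra isomorphism already suffices to transport $\SInv$, which follows from the characterization of hermitian elements via the norms of their associated one-parameter groups.
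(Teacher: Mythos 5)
Your proposal is correct and follows essentially the same route as the paper: reconstruction via \cref{prp:LpReconstruction} combined with the inverse-semigroup identification of \cref{prp:PIMP-SymFP}, and then transporting $\SInv$ along an isometric (not necessarily $\ast$-preserving) isomorphism. The only difference is that you spell out why a merely isometric algebra isomorphism preserves hermitian elements and hence $\SInv$, a point the paper leaves implicit.
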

\begin{proof}
Applying \cref{prp:LpReconstruction} at the first step, and using \cref{prp:PIMP-SymFP} at the second step, we get
\[
\calG 
\cong \calG_{\mathrm{tight}}\big( \SInv(\Fp(\calG)) \big) 
\cong \calG_{\mathrm{tight}}\big( \SInv(\Fpast(\calG)) \big).
\]

If $\calG$ and $\calH$ are isomorphic, then so are $\Fpast(\calG)$ and $\Fpast(\calH)$.
For the backwards implication, suppose that there is an isometric isomorphism of \ba{s} $\Fpast(\calG) \cong \Fpast(\calH)$.
Then there is an isomorphism of inverse semigroups $\SInv(\Fpast(\calG)) \cong \SInv(\Fpast(\calH))$.
Using the above reconstruction for $\calG$ and $\calH$, we get 
\[
\calG 
\cong \calG_{\mathrm{tight}} \big( \SInv(\Fpast(\calG)) \big) 
\cong \calG_{\mathrm{tight}} \big( \SInv(\Fpast(\calH)) \big) 
\cong \calH.\qedhere
\]
\end{proof}

Taking $p = 1$ in \cref{prp:SymLpRigidity} extends Wendel's classical result from \cite{Wen51IsometrIsoGpAlgs} from discrete groups to ample groupoids; we single out this special case.

\begin{cor} 
\label{prp:LIRigidity}
Let $\calG$ and $\calH$ be Hausdorff, ample groupoids. 
Then
\[
\calG \cong \calH \quad \text{if and only if} \quad
\LIG (\calG) \cong \LIG (\calH).
\]
\end{cor}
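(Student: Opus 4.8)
The plan is to deduce \cref{prp:LIRigidity} as the special case $p=1$ of \cref{prp:SymLpRigidity}, relying on the identification $\LIG(\calG) = F_{\lambda}^{1,\ast}(\calG)$ established in the paragraph preceding \cref{prp:FpStar-Fp-Injective}. The forward implication is immediate: an isomorphism of topological groupoids $\calG \cong \calH$ induces an isomorphism $C_c(\calG) \cong C_c(\calH)$ that intertwines the convolution products and involutions, hence preserves the $I$-norm (which is defined purely in terms of the source and range fibrations), and so extends to an isometric isomorphism of the completions $\LIG(\calG) \cong \LIG(\calH)$.

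For the backward implication, I would first invoke the identity $\LIG(\calG) = F_{\lambda}^{1,\ast}(\calG)$, which rests on the observation that the $I$-norm coincides with $\|\cdot\|_{1,\ast} = \max\{\|\cdot\|_{F_\lambda^1}, \|\cdot\|_{F_\lambda^{1,\ast}}\}$ on $C_c(\calG)$; this is exactly the content recorded in the paragraph defining $\LIG(\calG)$. With this identification in hand, the hypothesis of an isometric isomorphism $\LIG(\calG) \cong \LIG(\calH)$ becomes an isometric isomorphism $F_{\lambda}^{1,\ast}(\calG) \cong F_{\lambda}^{1,\ast}(\calH)$.

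Then the result follows directly by applying \cref{prp:SymLpRigidity} with $p=1$, since $1 \in [1,2)$. I would simply chain the three isomorphisms provided there: the reconstruction $\calG \cong \calG_{\mathrm{tight}}(\SInv(F_{\lambda}^{1,\ast}(\calG)))$, the fact that an isometric isomorphism of Banach algebras induces an isomorphism of the associated inverse semigroups of homotopy classes of MP-partial isometries $\SInv(F_{\lambda}^{1,\ast}(\calG)) \cong \SInv(F_{\lambda}^{1,\ast}(\calH))$, and functoriality of Exel's tight groupoid construction, concluding $\calG \cong \calH$.

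Since this corollary is genuinely a specialization rather than a new theorem, there is no substantial obstacle to overcome here; the only point requiring care is confirming that all the machinery of \cref{prp:SymLpRigidity} is valid at the boundary value $p=1$. This is precisely why \cref{prp:FpStar-Fp-Injective} and \cref{prp:PIMP-SymFP} were stated for the half-open interval $[1,2)$ rather than the open interval $(1,2)$, and why \cref{prp:FpStar-Fp-Injective} treats the case $p=1$ separately via the injectivity of the canonical map $\LIG(\calG) \to F_\lambda^1(\calG)$. Granting those results, the proof of \cref{prp:LIRigidity} is a one-line citation.
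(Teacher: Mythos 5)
Your proposal is correct and matches the paper's proof exactly: the paper likewise obtains this corollary by taking $p=1$ in \cref{prp:SymLpRigidity}, via the identification $\LIG(\calG) = F_{\lambda}^{1,\ast}(\calG)$. (One small slip: the symmetrized norm is $\max\{\|f\|_{F_\lambda^1}, \|f^\ast\|_{F_\lambda^1}\}$, not $\max\{\|f\|_{F_\lambda^1}, \|f\|_{F_\lambda^{1,\ast}}\}$ as you wrote, but this does not affect the argument.)
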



\providecommand{\href}[2]{#2}

\end{document}